\newcommand{\noun}[1]{\textsc{#1}}
\numberwithin{equation}{section}
\numberwithin{figure}{section}
\theoremstyle{plain}
\newtheorem{thm}{Theorem}[section]
  \theoremstyle{plain}
  \newtheorem{lem}[thm]{Lemma}
  \newtheorem{proposition}[thm]{Proposition}
  \theoremstyle{remark}
  \newtheorem{rem}[thm]{Remark}
 \theoremstyle{definition}
 \newtheorem*{defn*}{Definition}
  \theoremstyle{remark}
  \newtheorem*{rem*}{Remark} 
  \theoremstyle{plain}
  \newtheorem{cor}[thm]{Corollary}
 \theoremstyle{definition}
  \newtheorem{example}[thm]{Example}
  \theoremstyle{definition}
  \newtheorem{defn}[thm]{Definition}
  \theoremstyle{remark}
    \newtheorem{claim}[thm]{Claim}
  \theoremstyle{definition}
  \newtheorem{problem}[thm]{Problem}
\theoremstyle{definition}
  \newtheorem{lemma}[thm]{Lemma}
\theoremstyle{definition}
  \newtheorem{assumption}[thm]{Assumption}
\newcommand{\Rand}[1]{\marginpar{#1}}
     \renewcommand{\Rand}[1]{}
\newcommand{\be}[1]{\Rand{\vspace{0,6cm}\tt #1}\begin{equation}\label{#1}}
\newcommand{\bea}[1]{\Rand{\vspace{0,6cm}\tt #1}\begin{eqnarray}\label{#1}}
\newcommand{\beL}[2]{\Rand{\vspace{0,6cm}\tt #1}\begin{lemma}[#2]\label{#1}}
\newcommand{\beA}[2]{\Rand{\vspace{0,6cm}\tt #1}\begin{assumption}[#2]\label{#1}}
\newcommand{\beD}[2]{\Rand{\vspace{0,6cm}\tt #1}\begin{definition}[#2]\label{#1}}
\newcommand{\beT}[2]{\Rand{\vspace{0,6cm}\tt #1}\begin{theorem}[#2]\label{#1}}
\newcommand{\beP}[2]{\Rand{\vspace{0,6cm}\tt #1}\begin{proposition}[#2]\label{#1}}
\newcommand{\beC}[2]{\Rand{\vspace{0,6cm}\tt #1}\begin{conjecture}[#2]\label{#1}}
\newcommand{\beCor}[2]{\Rand{\vspace{0,6cm}\tt #1}\begin{corollary}[#2]\label{#1}}
\def\qed{{\hfill $\square$ \bigskip}}
\def\P{\mathbb P}
\def\E{\mathbb E}
\def\R{\mathbb R}
\def\LL{{  L}}
\begin{document}

\title [Large mass creation]{Superdiffusions with large mass creation --- construction and growth estimates}

%% First author

\author[Z.-Q. Chen]{Zhen-Qing Chen}
\address[Z.-Q. Chen]{Department of Mathematics\\ University of Washington\\ Seattle, WA
98195, USA}
 \email{zqchen@uw.edu}
\urladdr{http://www.math.washington.edu/$\sim$zchen/}
 
%% Second author

\author[J. Engl\"ander]{J\'anos Engl\"ander}
\address[J. Engl\"ander]{Department of Mathematics\\ University of Colorado\\
 Boulder, CO-80309-0395}
 
\email {Janos.Englander@Colorado.edu.}
\urladdr{http://euclid.colorado.edu/$\sim$englandj/MyBoulderPage.html}
\thanks{ J. Engl\"ander's research was partially supported by NSA grant ~H982300810021}

\keywords{super-Brownian motion, spatial branching processes, superdiffusion,
generalized principal eigenvalue, $p$-generalized principal eigenvalue, Poissonization-coupling, semiorbit, nonlinear $h$-transform, weighted superprocess}

\subjclass[2000]{Primary: 60J60; Secondary: 60J80}

\date{\today}

\begin{abstract}
 Superdiffusions corresponding to differential operators of the form $\LL u+\beta u-\alpha u^{2}$ with large mass creation term $\beta$ are studied. Our construction for superdiffusions with large mass creations works for the branching mechanism $\beta u-\alpha u^{1+\gamma},\ 0<\gamma<1,$ as well.

Let $D\subseteq\mathbb{R}^{d}$ be a domain in $\R^d$. 
When $\beta$ is large, the generalized principal eigenvalue $\lambda_c$ of $L+\beta$ in $D$ is typically infinite.  Let
$\{T_{t},t\ge0\}$  denote the Schr\"odinger semigroup of $L+\beta$ in $D$ with zero Dirichlet boundary condition. Under the mild assumption that there exists an $0<h\in C^{2}(D)$ so that $T_{t}h$ is finite-valued for all $t\ge 0$,  we show that there is a unique $\mathcal{M}_{loc}(D)$-valued Markov process that satisfies a log-Laplace equation in terms of the minimal nonnegative solution to 
 a semilinear initial value problem. 
Although for super-Brownian motion (SBM) this assumption requires 
 $\beta$ be 
less than quadratic, the quadratic case will be  treated as well.

When $\lambda_c = \infty$, the usual machinery, including martingale methods and PDE as well as other similar techniques cease to work effectively, both for the construction and for the investigation of the large time behavior of the superdiffusions.
In this paper, we develop the following two new techniques in the study of local/global growth  of  mass and for the spread of the superdiffusions:  
\begin{itemize}
\item a generalization of the Fleischmann-Swart `Poissonization-coupling,' linking superprocesses with branching diffusions;  
\item  the introduction of a new concept: the `{\it $p$-generalized principal eigenvalue.}'
\end{itemize} 
The precise growth rate for the total population of SBM with $\alpha(x)=\beta(x)=1+|x|^p$ for $p\in[0,2]$ 
is given   in this paper. 
\end{abstract}
\maketitle
\tableofcontents

\section{Introduction}  \label{S:intro}
\subsection{Superdiffusions}

Like Brownian motion, {\it super-Brownian motion} is also a building block in stochastic analysis. 
Just as  Brownian motion is a prototype of the more general diffusion processes, super-Brownian motion is a particular {\it superdiffusion}.
Superdiffusions are measure-valued Markov processes, but  unlike for branching diffusions, the values of superdiffusions taken  for $t>0$ are no longer discrete measures. Intuitively, such a process describes the evolution of a random cloud in space, or random mass distributed in space,   creating more mass at some regions while annihilating mass at some others along the way.

The usual way of defining or constructing a superdiffusion $X$ is:
\begin{enumerate}
\item as a measure valued Markov process  via its Laplace functional; or
\item as a scaling limit of branching diffusions.
\end{enumerate}
The second approach means that $X$  arises as the short lifetime and
high density diffusion limit of a \emph{branching particle system},
which can be described as follows: in the $n^{\mathrm{th}}$
approximation step each particle has mass $1/n$ and lives for a random
lifetime which is exponentially distributed with mean $1/n$. While a particle is
alive, its motion is described by a diffusion process  in $D$ with infinitesimal generator
 $L$ (where $D$ is a subdomain of $\R^d$  and the diffusion process is killed upon leaving $D$). 
At the end of its life, the particle located at $x\in D$ dies and is replaced
by a random number of offspring situated at the same location $x$. The law of the number of descendants is spatially varying
such that the number of descendants has mean $1+\frac{\beta(x)}{n}$ and variance   $2\alpha(x)$. 
Different particles experience branching and migration independently of each other; the branching of a given particle may interact with its motion, as the branching mechanism is spatially dependent.
 Hence  a superdiffusion can be  described by the quadruple $(L,  \beta,\alpha; D)$, where $L$ is the second order elliptic operator corresponding to the underlying spatial motion, 
$\beta$ (the `mass creation term') describes the growth rate of the superdiffusion\footnote{In a region where $\beta<0$, one actually has mass annihilation.},
$\alpha>0$
(sometimes called the `intensity parameter') is related to the variance of the branching mechanism,
 and $D$ is the region where the underlying spatial motion lives. 
 (A more general branching mechanism, including an integral term, corresponding to infinite variance, was introduced by E. B. Dynkin, but we do not work with those branching mechanisms in this paper.)

The idea behind the notion of superprocesses can be traced back to W. Feller, who observed in his 1951 paper on diffusion processes in genetics, that for large populations one can employ a model obtained from the Galton-Watson process, by rescaling and passing to the limit. The resulting {\it Feller diffusion} thus describes the scaling limit of the population mass. This is essentially the idea behind the notion of {\it continuous state branching processes.} They can be characterized as $[0,\infty)$-valued Markov processes, having paths which are right-continuous with left limits, and for which the corresponding probabilities $\{ P_x, x\ge 0\}$ satisfy the branching property: the  distribution of the process at time $t\ge 0$ under $P_{x+y}$ is the convolution of its distribution under $P_x$ and its distribution under $P_y$ for $x,y\ge 0$. Note that Feller diffusions focus on the evolution of the {\it total mass} while ignoring the location of the individuals in
the population.  The  first person who studied continuous state branching processes was  the Czech mathematician M. Ji\v{r}ina in 1958 (he called them `stochastic branching processes with continuous state space'). 

When the {\it spatial motion} of the individuals is taken into account as well, one obtains a scaling limit which is now a measure-valued branching process, or superprocess. The latter name was coined by E. B. Dynkin in the 1980's. Dynkin's work (including a long sequence of joint papers with S. E. Kuznetsov) concerning superprocesses and their connection to nonlinear partial differential equations was ground breaking. These processes are also called {\it Dawson-Watanabe processes} after the fundamental work of S. Watanabe \cite{Wat} in the late 1960's (see also the independent work by M. L. Silverstein \cite{Si}
at the same time)
and of D. Dawson \cite{Daw} in the late 1970's. Among the large number of contributions to the superprocess literature we just mention the `historical calculus' of E. Perkins, the `Brownian snake representation' of J.-F. LeGall,  the `look down construction' (a countable representation) of P. Donnelly and T. G. Kurtz, and the result of R. Durrett and E. Perkins showing that for $d\ge 2$, rescaled contact processes converge to super-Brownian motion. In addition, {\it interacting superprocesses} and {\it superprocesses in random media} have been studied, for example, by  D. Dawson, J-F. Delmas, A. Etheridge, K. Fleischmann, H. Gill, P.  M\"orters, L. Mytnik,  Y. Ren, R. Song,   P. Vogt, J. Xiong, and H. Wang, as well as by the authors of this article.

\subsection{Motivation}
A natural and interesting question in the theory of superprocesses is how fast the total mass and local mass grow as time evolves.
When $\beta$ is bounded from above (or more generally, when  $\lambda_c$, the generalized principal eigenvalue\footnote{For the definition and properties of $\lambda_c$ see Chapter 4 in \cite{Pinskybook}.} of $L+\beta$ on $D$ is finite), the problem of the local growth has been settled (see e.g. \cite{Jancsikonyv} and the references therein) and it is known that  the growth rate is at most exponential.  

The local and the global growth are not necessarily the same. In fact, another quantity, denoted by $\lambda_{\infty}$ is the one that gives the rate of the {\it global} exponential growth, when it is finite. It may coincide with $\lambda_c$ or it may be larger. Under the so-called {\it Kato-class assumption}
on $\beta$, it is finite. See subsection 1.15.5 in \cite{Jancsikonyv} for more explanation. 

In general, the growth rates of the superprocess can be super-exponential, and up to now, very little is known about the exact growth rates then. 
It is important to point out that in the general  case, even the {\it existence} of superdiffusions needs to be justified. 
The difficulty with the construction in such a situation (i.e. when $\lambda_c=\infty$) is compounded by the fact that in the lack of positive harmonic functions (i.e. functions that satisfy $(L+\beta-\lambda)u=0$ with some $\lambda$), all the usual machinery of martingales, Doob's $h$-transforms, semigroup theory etc. becomes unavailable. (When $\sup_{x\in \R^d} \beta (x) =\infty$ but $\lambda_c<\infty$, one can actually reduce the construction to the case when $\beta$ is a constant, see p.88 in \cite{Jancsikonyv}.)   
New ideas and approaches are needed for the construction and growth rate estimates.

Obtaining the precise growth rate of superprocesses with  `large' mass creation turns out to be  quite a challenging question, and there are many possible scenarios, depending on how large $\beta$ is; see Theorem \ref{T:1.1} below for example.  
The main part of  this paper is devoted to address this  question for a class of superprocesses with large mass creation. 
The effective method of `lower and upper solutions' for the partial differential equations associated with superprocess through the log-Laplace equation in the study of exponential growth rate for superdiffusions with bounded mass creation term $\beta$  becomes intimidatingly difficult if not impossible when $\beta$ is unbounded. 
(For a beautiful application of lower and upper solutions see \cite{Pinsky1995KPP,Pinsky1995Growth}.) 

In Section 5 of this article, we are going to introduce the new concept of the `$p$-generalized principal eigenvalue,' in an effort to capture the super-exponential growth rate of superprocesses with large mass creation term. 

In the last part of this paper, we will employ the `Poissonization' method to study super-exponential growth rate for superprocesses
with large mass creation, by relating them to discrete branching particle systems. 
In order to do this, we extend some results of Fleischmann and Swart given in \cite{FleischmannSwart} concerning the coupling of superprocesses and discrete branching particle systems, from deterministic times to  stopping times. This part may be of independent interest. An advantage of this method over the use of test functions, even in the case of $\lambda_c<\infty$,  is that it enables one to transfer results directly from the theory of branching diffusions, where a whole different toolset is available   as one is working with a discrete system. (A remark for the specialist: classical `skeleton decompositions' work only one way a priori, as the skeleton is a non-trivial part of the measure-valued process, after conditioning on survival. The `Poissonization' method we use, however, always works both ways.)

 Here is one of the main results of this paper, which gives the connection between the growth rate of superdiffusions and that of the corresponding branching processes.

 \begin{thm}[General comparison between $Z$ and $X$]\label{GCT}
Let $(X,P_0)$ be the superprocess corresponding to the operator $Lu+\beta u-\beta u^2$ on $D$ and $(Z,\mathsf{P}_0)$  the branching diffusion on $D$ with branching rate $\beta$,  started at the origin with unit mass, and with a {\sf Poisson(1)} number of particles, respectively. Let $|X|$ and $|Z|$ denote the total mass processes. Denote by  
$$
S:=\{|X_t|>0 \hbox{ for every }  t\ge 0\}
$$
 the event of survival for the superdiffusion.
 Let $f:[0,\infty)\to[0,\infty)$ be a continuous function such that $\lim_{x\to\infty}f(x)=\infty$.
Then 
\begin{enumerate}
\item[(i)] the condition
\begin{equation}\label{feltetel}
\mathsf{P}_0\left(\limsup_t \frac{|Z_t|}{f(t)}\le 1 \right)=1
\end{equation}
 implies that
 $P_0\left(\limsup_t \frac{|X_t|}{f(t)}\le 1\right)=1;$
 \item[(ii)]
 the condition
 \begin{equation}\label{feltetel2}
\mathsf{P}_0\left(\liminf_t \frac{|Z_t|}{f(t)}\ge 1 \right)=1
\end{equation}
 implies that
 $P_0\left(\liminf_t \frac{|X_t|}{f(t)}\ge 1 \mid S\right)=1,$
provided that one has \linebreak $P_0(\lim_{t\to\infty}|X_t|=\infty\mid S)=1$. This latter condition is always satisfied if the coefficients of $\frac{1}{\beta} L$ are bounded from above.
 \end{enumerate}
\end{thm}

Using Theorem \ref{GCT} and the results from  \cite{BBHH2010,BBHHM2015} on the corresponding branching Brownian motions, we have the following result, which  illustrates  some possible super-exponential 
growth rates the total mass of  a super-Brownian motion with large mass creation term $\beta$ may  have.

\begin{thm}\label{T:1.1}
Let $X$ be a one-dimensional super-Brownian motion corresponding to $(\frac12 \Delta,  \beta, \beta; \R)$.
Let $S$ be as in Theorem \ref{GCT}.
Then
 \begin{enumerate}
\item If $\beta(x)=1+|x|^p $ for  $0\leq p<2$,  then
 $$
 \lim_{t\to\infty}\frac{1}{t^{\frac{2+p}{2-p}}}\log |X_t|= K_p  \quad P_0-\hbox{a.s. on }S,
 $$ 
 where $K_p$ is positive constant, depending on $p$.

\item If $\beta(x)=1+C|x|^2$,  with $C>0$, then
$$
\lim_{t\to\infty}(\log\log |X_t|)/t= 2\sqrt{2}C  \quad P_0-\hbox{a.s. on } S. 
$$
\end{enumerate}
\end{thm}
Note: it is not difficult to show that the survival set $S$ is not-trivial. In fact, $P_{\delta_x} (S)\geq e^{-1}$ for every $x\in \R$; see Section 6.3.

\subsection{Outline} 
The rest of the paper is organized as follows.
Section \ref{S:2} gives some preliminaries including notation that will be used later in the paper.
The first main result of this paper, regarding the construction of superdiffusions with general large mass creation, is given
in Section \ref{S:3}.  When the generalized principal eigenvalue $\lambda_c$ of $L+\beta$ on $D$ is infinite, we show in Section \ref{S:4} that the local mass of the superprocess can no longer grow at an exponential rate: the growth will be `super-exponential.' In Section \ref{S:5} we will focus on super-Brownian motion on $\R^d$ with mass creation $\beta (x)=a |x|^\ell$ for $0\le\ell \le 2$; construction and some basic properties are discussed, in particular, the  growth of the total mass for the case when $d=1$.  

We then introduce a new notion we dubbed the `$p$-generalized principal eigenvalue' (a notion more general than $\lambda_c$). Some of its properties are investigated in the Appendix.  

Section \ref{S:6} is devoted to employing a `Poissonization' method to obtain precise growth rate for the total mass of the superprocess from that of the total mass   
of the corresponding discrete branching process; see Theorem \ref{GCT}. 
The proof of  Theorem \ref{T:1.1} is given at the end of Section \ref{S:6} as a corollary to Theorem \ref{GCT}.

 \section{Preliminaries}\label{S:2}

\subsection{Notation}\label{notat}

For convenience, we first recall basic notation. Let $d\ge 1$ and $D\subseteq\mathbb{R}^{d}$
be a domain and let ${\mathcal{B}}(D)$ denote the Borel sets of $D$.
We write ${\mathcal{M}}_{f}(D)$ and ${\mathcal{M}}_{c}(D)$ for the
class of finite measures and the class of finite measures with
compact support on ${\mathcal{B}}(D)$, respectively,  and $\mathcal{M}_{loc}(D)$
denotes the space of locally finite measures on ${\mathcal{B}}(D)$.
For $\mu\in{\mathcal{M}}_{f}(D)$, denote $|\mu|:=\mu(D)$ and let
$B^+_b(D),\ C_{b}^{+}(D)$ and $C_{c}^{+}(D)$ be the class of non-negative bounded
Borel measurable, non-negative bounded continuous and non-negative continuous functions $D\rightarrow\mathbb{R}$
having compact support, respectively. 
For integer $k\geq 0$ and $0\leq \eta <1$, we use $C^{k,\eta}(D)$ to denote the space
of continuous functions on $D$ that have continuous derivatives up to and including the $k$th order and
whose $k$th oder partial derivatives are $\eta$-H\"older continuous in $D$.   
We write $C^k(D)$ for $C^{k, 0}(D)$ and  $C^{\eta}(D)$ for $C^{0,\eta}(D)$.

For two nonempty sets  $D_1$ and $D_2$ in $\mathbb R^d$, the notation $D_1\Subset D_2$ will mean that $\overline{D_1}\subset D_2$ and $D_1$ is bounded.

The notation $\mu_{t}\stackrel{v}{\Rightarrow}\mu$ ($\mu_{t}\stackrel{w}{\Rightarrow}\mu$) will be used for the vague (weak) convergence of measures.

Let $\LL $ be an elliptic operator on $D$ of the form \[
\LL :=\frac{1}{2}\nabla\cdot a\nabla+b\cdot\nabla,\]
 where $a_{i,j},b_{i}\in C^{1,\eta}(D)$, $i,j=1,...,d$, for some
$\eta\in(0,1]$, and the matrix $a(x):=(a_{i,j}(x))$ is symmetric
and positive definite for all $x\in D$. In addition, let $\alpha,\beta\in C^{\eta}(D)$,
with $\alpha>0$.
  
Let   $Y=\left\{ Y_t, t\geq 0, \mathbb{P}_{x},\ x\in D\right\} $
be the {\it minimal diffusion process} in $D$ having infinitesimal generator  $\LL  $ in $D$; that is, $Y$ is a diffusion process having infinitesimal generator $\LL$ with killing upon exiting $D$.
Note that typically $Y$ may have finite lifetime $\zeta$  and thus $\mathbb{P}_{x}\left(Y_{t}\in D\right)\le1$ in general.
 (In the terminology of \cite{Pinskybook}, $Y$ is the solution of the {\it generalized martingale problem} for $L$ on $D$. The world `generalized' refers to the fact that conservativeness is not assumed.)
Finally, let
 \begin{eqnarray*}
\lambda_c &=&\lambda_{c}(\LL +\beta,D) \\
&:=& \inf\{\lambda\in\mathbb{R}\ :\exists   u\in C^2\hbox{  with }u>0,\ (\LL +\beta-\lambda)u=0\ \text{in}\ D\}
\end{eqnarray*}
denote the \textit{generalized principal eigenvalue} for $\LL +\beta$ on $D$.
See Section 4.3 in \cite{Pinskybook} for more on this notion, and on its relationship with $L^2$-theory. (Here the word `generalized' basically refers to the fact that $L$ is not necessarily self-adjoint.)

\subsection{The construction of the  $(\LL ,\beta,\alpha;D)$-superdiffusion}

In \cite{AOP99}  the $\mathcal{M}_{loc}(D)$-valued
$(\LL ,\beta,\alpha;D)$-superdiffusion $X$ corresponding to the semilinear
elliptic operator $\LL u+\beta u-\alpha u^{2}$ has been constructed, under the assumption that
  \begin{equation}
\lambda_{c}(\LL +\beta,D)<\infty.\label{gpe.finite}\end{equation}
For the case when $\beta$ is upper bounded, the construction of
an $\mathcal{M}_{f}(D)$-valued process relied on the method of Dynkin
and Fitzsimmons \cite{Dynkin2002book,Fitzsimmons,Fitzsimmonscorrection},
but instead of the mild equation,  the strong equation (PDE)
was used in the construction. (In \cite{Dynkin2002book} $\beta$ is assumed
to be bounded from above and also below. This is related to the fact
that the mild equation is used in the construction.) Then
a nonlinear $h$-transform (producing `weighted superprocesses') has been  introduced in \cite{AOP99}, and with the help of this transformation
it became possible to replace $\sup_{D}\beta<\infty$ by (\ref{gpe.finite})
and get an $\mathcal{M}_{loc}(D)$-valued process. The condition (\ref{gpe.finite})
is always satisfied when $\beta$ is bounded from above, and in many
other cases as well (for example on a bounded domain $\beta$ can
be allowed to blow up quite fast at the boundary --- see p. 691 in \cite{AOP99}).

Nevertheless, (\ref{gpe.finite}) is often very restrictive. For example,
when $\LL $ on $\mathbb{R}^{d}$ has constant coefficients, then even
a {}``slight unboundedness'' destroys (\ref{gpe.finite}), as the
following lemma shows.

\begin{lem}
\label{slight.unbddness} Assume that $\LL $ on $\mathbb{R}^{d}$ has
constant coefficients and that there exists an $\varepsilon>0$ and a
sequence $\{x_{n}\}$ in $\mathbb{R}^{d}$ such that
$$
\lim_{n\to\infty}\inf_{x\in B(x_{n},\varepsilon)}\beta(x)=\infty.
$$
   Then  \eqref{gpe.finite}  does not hold for $D=\R^d$.

 \end{lem}

\begin{proof}
By the assumption, for every $K>0$ there exists an $n=n(K)\in\mathbb{N}$
such that $\beta\ge K$ on $B_{\varepsilon}(x_{n})$. Let $\lambda^{\varepsilon}$
denote the principal eigenvalue of $\LL $ on a ball of radius $\varepsilon$.
(Since $\LL $ has constant coefficients, $\lambda^{\varepsilon}$ is well
defined.) Since \[
\lambda_{c}=\lambda_{c}(\LL +\beta,\mathbb{R}^{d})\ge\lambda_{c}(\LL +\beta,B_{\varepsilon}(x_{n}))\ge\lambda^{\varepsilon}+K,\]
 and $K>0$ was arbitrary, it follows that $\lambda_{c}=\infty$.
\end{proof}
\noindent The first purpose of this paper is to replace (\ref{gpe.finite})
by a much milder condition. We note that for the discrete setting (branching diffusions), super-exponential growth has been studied in \cite{HH2009,BBHH2010,BBHHR2015}. In the recent paper  \cite{EKW2015} the connection between the two types of processes has been studied. 

\subsection{Condition replacing (\ref{gpe.finite})}
Recalling that $Y$ is the diffusion
process corresponding to $\LL $ on $D$ with lifetime
$\tau_{D}:=\inf\{t\ge0\mid Y_{t}\not\in D\}\in(0,\infty]$, 
let us define $\{T_{t},t\ge0\}$,  the formal\footnote{Finiteness, continuity or the semigroup property are not required, hence the adjective.} `Dirichlet-Schr\"odinger semigroup'
of $\LL +\beta$ in $D$, by   
\[
(T_{t}g)(x):=\mathbb{E}_{x}\left[\exp\left(\int_{0}^{t}\beta(Y_{s})
\,\mathrm{d}s\right) g(Y_{t}); t<\tau_{D}\right]\in[0,\infty],
\]
when $g\in C^{+}(D)$, $t\ge0$, and $x\in D$.

\medskip

The following  assumption, requiring that $T_t(h)$ is finite for all times for just a single positive function,  will be crucial in the construction of the superprocess.

\begin{assumption}[Existence of $T$ for a single $h>0$]\label{main.assumption}
Assume that there exists a positive function $h\in C^{2}(D)$ satisfying
that $T_{t}h (x)<\infty$ for each $t>0$ and $x\in D$.
\end{assumption}

Let  $C_c^{2,\alpha}(D):=C_c(D)\cap C^{2,\alpha}(D)$.

\begin{proposition}[Equivalent formulation]
Assumption \ref{main.assumption} is  equivalent to the following condition:
For some (or equivalently, all) non-vanishing $0\le \psi\in C_c^{2,\alpha}(D)$,  $T_t \psi<\infty$ for all $t>0$.  
\end{proposition}
\begin{proof}
It is sufficient to show that 

{\sf (a)} if for some non-vanishing $0\le \psi\in C_c^{2,\alpha}(D)$,  $T_t \psi<\infty$ for all $t>0$, then Assumption \ref{main.assumption} holds;

{\sf (b)} if for some non-vanishing $0\le \psi\in C_c^{2,\alpha}(D)$ and some $x_0\in D, t>0$, we have $T_t (\psi)(x_0)=\infty$,
then Assumption \ref{main.assumption} fails.

Indeed, in the first case, for every $x\in D$,
\begin{equation*}
h(x):=T_1\psi (x) =
\E_x \left[ e^{ \int_0^t \beta(Y_s)}\, \psi   (Y_t); t<\tau_D \right] >0,
\end{equation*}
because $$\P_x \left[ e^{ \int_0^t \beta(Y_s)}\, \psi   (Y_t)>0 \mid t<\tau_D \right] >0,$$
as $e^{ \int_0^t \beta(Y_s)}>0$ and $\P_x \left[\psi   (Y_t)>0 \mid t<\tau_D \right] >0.$
Then, clearly, $T_t h=T_{t+1}\psi<\infty$ for all $t>0$, and thus Assumption \ref{main.assumption} will hold.

In the second case, for any  $0<h\in C^2(D)$, there exists a $C>0$ such that $Ch>\psi$, implying
$T_t (h)(x_0)=\infty$, and thus Assumption \ref{main.assumption} cannot hold.
\end{proof}

\begin{rem}\label{R:2.4}
Approximating $D$ by 
an increasing sequence of relatively compact 
domains and using standard compactness arguments, it is not difficult to show that under Assumption \ref{main.assumption}, the function $u$ defined by $u(x, t):= T_t h (x)$ solves the parabolic equation
$$
\frac{\partial u}{\partial t} = (\LL +\beta) u\hspace{1cm}
  \mbox{in }D\times (0,\infty),
$$
 and in particular, $u \in C([0, \infty) \times D)$.  
\end{rem}

\medskip
 
 \subsection{A useful maximum principle}

 In the remaining part of this paper, for convenience, we will use either $\dot u$ or $\partial_t u$ to denote $\frac{\partial u}{\partial t}$.
 We will frequently refer to the following parabolic semilinear maximum principle due to  R. Pinsky    \cite[Proposition 7.2]{AOP99}:

\begin{proposition}[Parabolic semilinear maximum principle]\label{Parabolic semilinear maximum principle} 
Let $\LL ,\beta$ and $\alpha$ be as in Subsection \ref{notat} and let 
 $U \Subset D$ be a non-empty domain.  Assume that  the functions $0 \le v_1,v_2 \in C^{2,1}(U \times (0,\infty)) \cap C(\overline {U} \times (0,\infty))$ satisfy 
$$
\LL v_1 + \beta v_1 -\alpha v_1^2 - \dot v_1 \le \LL v_2 +\beta v_2 -\alpha v_2^2 -\dot v_2
\quad \hbox{in }  U \times (0,\infty), 
$$
  $v_1(x,0) \ge  v_2(x,0)$ for $x \in U$, and $v_1(x,t) \ge v_2(x,t)$ on  $\partial U\times (0, \infty)$. Then $v_1 \ge v_2$ in $U\times[0,\infty)$. 
\end{proposition}

\section{Superprocess with general mass creation}  \label{S:3}

The following theorem is one of the main results of this paper, on  the construction of the superprocess with large mass creation. 

\begin{thm}[Superprocess with general mass creation]   \label{main.thm} 
Under Assumption \ref{main.assumption}  there exists a unique $\mathcal{M}_{loc}(D)$-valued Markov process $\{\left(X,P_{\mu}\right);\mu\in \mathcal{M}_c(D)\}$  satisfying the log-Laplace equation
\begin{equation}
E_{\mu}\exp(\langle-g,X_{t}\rangle)=\exp(\langle-S_{t}(g),\mu\rangle),\ g\in C_{c}^{+}(D),\ \mu\in\mathcal{M}_{c}(D),\label{log.Laplace}\end{equation}
where $S_t(g)(\cdot)=u(\cdot, t)$ is the minimal nonnegative solution to the semilinear initial value problem (``cumulant equation")
\begin{equation}\label{cum.eq}
\left\{ \begin{array}{rll}
  \dot{u}&=\LL u+\beta u-\alpha u^2\hspace{1cm}
  \mbox{in }D\times (0,t),
, \\
 \lim_{t\downarrow 0}u(\cdot,t)&=g(\cdot).
\end{array}\right.
\end{equation}
\end{thm}

\begin{defn}
The process $X$ under the probabilities $\{P_{\mu},\ \mu\in\mathcal{M}_{c}(D)\}$
in Theorem \ref{main.thm} will be called the \emph{$(\LL ,\beta,\alpha;D)$-superdiffusion}. 
\end{defn}

\begin{rem}
 (i)  Although we only consider the operator $\LL u+\beta u-\alpha u^{2}$ 
in this paper, the construction of the superprocess goes through for
the operator $\LL u+\beta u-\alpha u^{1+p}$, $0<p<1$,  as well.

(ii)  
Condition (\ref{gpe.finite})  implies Assumption
\ref{main.assumption}. This is because if (\ref{gpe.finite}) holds, 
 then there is  a $C^2(D)$-function   $h>0$ such that $(\LL + \beta -\lambda_c ) h=0$ in $D$.
(See Section 4.3 in \cite{Pinskybook}.)  Clearly, it is enough to  show that $T_t(h)\le h.$

Let $\{D_k; k\geq 1\}$ be an increasing sequence of relatively compact smooth subdomains of $D$ with $D_k\Subset D_{k+1}\Subset D$ that increases to $D$.
By the Feynman-Kac representation, for every $k\geq 1$, 
$$ 
0\le u^{(k)}(x,t):=\E_x \left[ e^{\int_0^t [\beta(Y_s)-\lambda_c] \mathrm{d}s} h(Y_t); t<\tau_{D_k} \right]   ,  \quad x\in D_k, t\geq 0,
$$
is the unique parabolic solution for  $\dot{u}=(L+\beta-\lambda_c)u$ on $D_k$ with zero boundary condition and initial condition $h$.

By taking $k\to \infty$, and using the above Feynman-Kac representation (or the parabolic maximum principle), $u^{(k)}$ are monotone nondecreasing in $k$, and are all bounded from above by $h$ (which itself is a nonnegative parabolic solution on each domain $D_k$ with initial condition $h$ restricted on $D_k$).
Therefore, by the Monotone Convergence Theorem, the limiting function $u$ satisfies that
 $$ 
h(x)\ge u(x,t)= \E_x \left[ e^{\int_0^t [\beta(Y_s)-\lambda_c] \mathrm{d}s} h(Y_t); t<\tau_{D } \right]=T_t(h)(x).
\hspace{2cm}\diamond $$
\end{rem}

From (\ref{log.Laplace}) it follows that $X$ possesses the \textit{branching
property.}
\begin{cor}[Branching property]
 If \textup{$\mu,\nu\in\mathcal{M}_{c}(D),\ t\ge0$ and $g\in C_{c}^{+}(D),$
then the distribution of $\langle g,X_{t}\rangle$ under $P_{\mu+\nu}$
is the convolution of the distributions of $\langle g,X_{t}\rangle$ under
$P_{\mu}$ and under $P_{\nu}$. }
\end{cor}
We first recall the definition
  of the nonlinear space-time $H$-transform. Consider the backward
operator
$$
\begin{aligned}
   {\mathcal A}(u)&:=\partial_s
   u+({\LL }+{\beta})u-{\alpha}u^2,
\end{aligned}
$$
and let $0<H\in 
 C^{2,1, \eta}(D \times \R^+)$.
 Analogously to  Doob's $h$-transform for linear operators, introduce the new operator $\mathcal{A}^{H}(\cdot):=\frac{1}{H}\mathcal{A}(H\cdot).$ Then a direct computation gives that
\be{H5}
\begin{aligned}
   {\mathcal A}^H(u)=
   \frac{\partial_sH}{H}u+\partial_su+{\LL }u+{a}\frac{\nabla H}{H}\cdot
   \nabla u+{\beta}u+\frac{{\LL }H}{H}u-{\alpha}Hu^2.
\end{aligned}
\end{equation}

This transformation of operators has the following probabilistic impact.
Let  $X$ be a
$({\LL },\beta,\alpha;D)$-superdiffusion. We
define a new process $X^H$ by
\begin{equation}\label{newprocess}
  X_t^H:=H(\cdot,t)X_t\quad \left(\mbox{that is,}\
  \frac{\mathrm{d}X_t^H}{\mathrm{d}X_t}=H(\cdot,t)\right), \quad
  t\geq 0.
\end{equation}
 In this way, 
one obtains a new superdiffusion, which, in general, {\it
is not finite measure-valued} but only $\mathcal{M}_{loc}(D)$-valued.
The connection between $X^H$ and $\mathcal{A}^H$ is given by the following result.
\beL{LH1}{Lemma 3 in \cite{ew2006}}

The process $X^H$, defined by (\ref{newprocess}),
is a superdiffusion corresponding to $\mathcal{A}^H$ on $D$.
\end{lemma}
Note that the differential operator $\LL $  is transformed into 
$$
\LL_0^H:={\LL }u+{a}\frac{\nabla H}{H}\cdot\nabla,
$$
 while $\beta$ and $\alpha$ transform into 
 $\beta^H:={\beta}+\frac{( \partial_s  + \LL) H}{H} $
and $\alpha^H:=\alpha H$, respectively.

It is clear that given a superdiffusion,  $H$-transforms can be used to produce new superdiffusions that are weighted versions of the old one. 
 See \cite{ew2006} for more on $H$-transforms. 
 We now show that, under the assumption of Theorem \ref{main.thm}, one can always use $H$-transforms to construct the superdiffusion.

Recall that by Assumption  \ref{main.assumption}, there exists an $h>0$ such that $(T_t h)(x)<\infty$ for all $t\ge 0$ and $x\in D$. Let us fix such an $h$.
We first work with a {\it fixed finite time horizon}. Fix $t>0$ and for $x\in D,r\in[0,t]$, consider
 $$
 H(x,r;t,h):=(T_{t-r}h)(x)<\infty.
 $$
 Then $0<H\in C^{2,1, \eta}(D \times \R^+)$ and $H$ solves the backward equation
\begin{equation}\label{backward}
\begin{aligned}
   -\partial_r H&={\LL }H+{\beta} H\hspace{1cm}
  \mbox{in }D\times (0,t),
  \\
   \lim_{r\uparrow t}H(\cdot,r;t,h)&=h(\cdot).
\end{aligned}
\end{equation}
(One can approximate $D$ by an increasing sequence of compactly embedded domains $D_n$ and consider the Cauchy problem with Dirichlet boundary condition. By the maximum principle, the solutions are growing in $n$, and, by the assumption on $h$, the limit is finite. That the limiting function  is a solution and it belongs to $C^{2,\eta}(D)\times C^{1,\eta}(\mathbb R^+)$, follows by using standard {\it a priori} estimates and compactness in the second order H\"older norm; see Theorems 5 and 7 in Chapter 3 in \cite{FriedmanParabolicBook}.)

For the rest of this subsection fix a measure $\mu\in\mathcal{M}_c(D)$. Keeping $t>0$ still fixed, we  first   show that the (time-inhomogeneous) critical measure-valued process $\widehat X$ corresponding to the quadruple 
 $$ 
\left( L_0^H, \beta^H, \alpha^H; D \right)= \Big(L+a\frac{\nabla H}{H}\cdot \nabla, \, 0, \, \alpha H; \, D \Big)
$$
 on the time interval $[0,t]$ is well defined. To check this, recall the construction in Appendix A in \cite{AOP99}. That construction goes through for this case too, despite the time-dependence of the drift coefficient of the diffusion and the variance term $\alpha$. Indeed, the first step in the construction of the measure-valued process is the construction of the minimal nonnegative solution to the semilinear parabolic Cauchy problem (\ref{cum.eq}). It is based on the approximation of $D$ with compacts $D_n\Subset D,\cup_{n=1}^{\infty} D_n=D$, and imposing zero Dirichlet boundary condition on them (see the Appendix A in \cite{AOP99}). By the local boundedness of $\beta$, the solution with zero boundary condition for the original operator is well defined on compacts, and therefore it is also well defined  for the $H$-transformed operator on compacts. As $n\to\infty$, the solution to this latter one does not blow up, because the new potential term is zero and because of Proposition \ref{Parabolic semilinear maximum principle}. Hence, the solution to the original Cauchy problem does not blow up either.

Once we have the minimal nonnegative solution to the $H$-transformed Cauchy problem
 we have to check that it defines, via the log-Laplace equation, a finite measure-valued Markov process on the time interval $[0,t]$.

Let $S_s^H(g)(x):=u^{(g)}(x,s)$, where $u^{(g)}$ denotes the minimal nonnegative solution to the $H$-transformed nonlinear
Cauchy problem 
 $$
    \dot{u} =\LL_0^H u -\alpha^H  u^2 
      $$
 with $u(x,0)=g(x)\in C_{b}^+(D)$. Note that
\begin{equation}\label{down}
S_{s}^{H}(g_{n})\downarrow0\ \text{pointwise, whenever }
 g_n 
\in C_{b}^{+}(D),\ \text{and}\ g_{n}\downarrow0\ \text{pointwise},
\end{equation}
because, using the semilinear parabolic
maximum principle and the fact that 
 $$
S_{s}^{H}(g_{n})\le T_{s}^{H}(g_{n})\le \| g_n \|_\infty,
$$
 where $\{T^H_s; s\geq 0\}$ is the semigroup associated with the infinitesimal generator $\LL^H$
with Dirichlet boundary condition on $\partial D$.
 This also shows that the shift $S_{t}^{h}$ leaves $C_{b}^{+}(D)$ invariant.

Before proceeding further, let us note that, by the minimality of the solution
$S^{H}$ forms a semigroup on $C_b^+(D)$:
\begin{equation}\label{semigroup}
S^H_{s+z}=S^H_s\circ S^H_z,\ \text{for}\  0\le s,z\ \text{and}\  s+z\le t.
\end{equation}
(Obviously, $S_0$ is the unit element of the semigroup.)

Reading carefully the construction in \cite{Dynkin2002book,DynkinAnnals1993paper}
along with the one in Appendix A of \cite{AOP99}, one can see that
in order to define the $\mathcal{M}_{f}(D)$-valued superprocess $\widehat{X}$
corresponding to $S^{H}$ (on $[0,t]$) via the log-Laplace equation \begin{equation}
\E_{\mu}^{H}\exp(\langle-g,\widehat{X}_{s}\rangle)=\exp(\langle-S_{s}^{H}(g),\mu\rangle),\ g\in C_{b}^{+}(D),\,\mu\in\mathcal{M}_{f}(D),\label{h-transformed.log-Lapl.}\end{equation}
 one only needs that $S^{H}$ satisfies (\ref{down}) and (\ref{semigroup}). In particular, property (\ref{semigroup})
for $S^{h}$ guarantees the Markov property for the superprocess $\widehat{X}$.

Below we sketch how this construction goes, following Appendix A in
\cite{AOP99}. The fundamental observation is that $S^{H}$ enjoys
the following three properties:
\begin{enumerate}
\item $S_{s}^{H}(0)=0;$
\item The property under (\ref{down});
\item $S_{s}^{H}$ is an \textit{N-function} on $C_{b}^{+}(D)$; that is\footnote{An explanation of the terminology `P-function' and `N-function'
is given on pp. 40-41 in \cite{Dynkin2002book}. Note that in \cite{AOP99}
we used the names positive semidefinite and negative semidefinite,
respectively.},
\[
\sum_{i,j=1}^{n}\lambda_{i}\lambda_{j}S_s^H(f_{i}+f_{j})\le0\ \text{if}\ \sum_{i}^{n}\lambda_{i}=0,\ \forall n\ge2,\ \forall f_{1},...,f_{n}\in C_{b}^{+}(D).\]

\end{enumerate}
For the third property, just like in \cite{AOP99}, one utilizes \cite{DynkinAnnals1993paper}
(more precisely, the argument on p. 1215).

Then, one defines $\mathcal{L}_{s}^{H}(\cdot):=\exp(-S_{s}^{H}(\cdot))$,
$0\le s\le t$ on $C^{+}(D)$, and checks that it satisfies
\begin{enumerate}
\item $\mathcal{L}_{s}^{H}(0)=1;$
\item $\mathcal{L}_{s}^{H}g\in(0,1]$ for $f\in C^+(D)$;
\item The property under (\ref{down}), if decreasing sequences are replaced
by increasing ones;
\item $\mathcal{L}_{s}^{H}$ is a \textit{P-function} on $C_{b}^{+}(D)$
; that is, \[
\sum_{i,j=1}^{n}\lambda_{i}\lambda_{j}\mathcal{L}_{s}^{H}(f_{i}+f_{j})\ge0,\ \forall n\in\mathbb{N},\ \forall f_{1},...,f_{n}\in C_{b}^{+}(D),\ \forall\lambda_{1},...,\lambda_{n}\in\mathbb{R}.\]

\end{enumerate}
(For the fourth property, see p. 74 in \cite{BCR}.) As noted in \cite{AOP99},
these four properties of $\mathcal{L}^{H}$ imply that for every $x\in D$
and $0\le s\le t$ fixed, there exists a unique probability measure $\widehat{P}^{x,s}$
on $\mathcal{M}_{f}(D)$ satisfying for all $g\in C_{b}^{+}(D)$ that\[
\mathcal{L}_{s}^{H}(g)(x)=\int_{\mathcal{M}_{f}(D)}e^{-\langle g,\nu\rangle}\:\widehat{P}^{x,s}(\text{d}\nu).\]
(As explained on p. 722 in \cite{AOP99}, one can use Corollary A.6
in \cite{Fitzsimmons} with a minimal modification. Alternatively,
use Theorem 3.1 in \cite{Dynkin2002book} instead of \cite{Fitzsimmons}.
The integral representation of $\mathcal{L}_{s}^{H}(g)(x)$ above
is essentially a consequence of the Krein-Milman Theorem, which can
be found e.g. in section 2.5 in \cite{BCR}.) It then follows from
the property under (\ref{semigroup}) that the functional $\mathcal{L}^{H}$
defined by \[
\mathcal{L}^{H}(s,\mu,g):=\exp\left(-\langle S_{s}^{H}g(x)\:\mu\rangle\right),\ g\in C_{b}^{+}(D),\quad \mu\in\mathcal{M}_{f}(D)\]
is a \textit{Laplace-transition functional,} that is, there exists
a unique $\mathcal{M}_{f}(D)$-valued Markov process $\left(\widehat{X},\widehat{P}\right)$,
satisfying that

\[
\mathcal{L}^{H}(s,\mu,g)=\widehat{\E}_{\mu} \left[ e^{-\langle g,\widehat{X}_{s}\rangle} \right],\quad s\ge 0,g\in C_{b}^{+}(D),\ \mu\in\mathcal{M}_{f}(D),\]
finishing the construction of $\widehat{X}.$

Now consider $\widehat X$ corresponding to the quadruple $(L+a\frac{\nabla H}{H}\cdot \nabla,0,\alpha H;D)$ on the time interval $[0,t]$ starting with initial measure $\widehat\mu^{t,h}:=H(\cdot,0;t,h) \mu$.
By the properties of the $H$-transform reviewed above,  the measure-valued process $X_r:=H^{-1}(\cdot,r;t,h)\widehat X_r$ corresponds to the quadruple $(L,\beta,\alpha;D)$ on the same time interval $r\in[0,t]$, with initial measure $\mu$.

In other words, stressing now the dependence on $t$ in the notation, if $\widehat \P^{(t)}$ corresponds to $\widehat X^{(t)}$, then the measure valued process
$$
X^{(t)}_r:=H^{-1}(\cdot,r;t,h)\widehat X^{(t)}_r
$$
 under $\widehat \P^{(t)}_{\widehat\mu^{t,h}}$ satisfies the log-Laplace equation (\ref{log.Laplace}), and moreover, clearly,
 $\widehat \P^{(t)}_{\widehat\mu^{t,h}}(X^{(t)}_0=\mu)=1.$

This, in particular, shows that the definition is consistent, that is, if $t<t'$, then $\widehat P^{(t)}_{\widehat\mu^{t',h}}(X^{(t)}_\cdot\in \cdot)$ and $\widehat P^{(t')}_{\widehat\mu^{t,h}}(X^{(t')}_\cdot\in \cdot)$ agree on $\mathcal{F}_t$, and thus we can extend the time horizon of the process $X$ to $[0,\infty)$ and define a probability $P$ for paths on $[0,\infty)$. Indeed the finite dimensional distributions up to $t$ are determined by the same log-Laplace equation and $\widehat P^{(t')}_{\widehat\mu^{t',h}}(X^{(t')}_0=\mu)=1$ is still true when we work on $[0,t']$.

The semigroup property (or equivalently, the Markov property) is inherited from $S^H$ to $S$ (from $\widehat X$ to $X$) by the definition of the $H$-transform.

Our conclusion is that the $\mathcal M_{loc}(D)$-valued Markov process $\{\left(X,P_{\mu}\right);\mu\in \mathcal{M}_c(D)\}$ is well defined on $[0,\infty)$ by the log-Laplace equation (\ref{log.Laplace}) and the cumulant equation ($\ref{cum.eq}$).  \qed

\begin{rem} There is a similar construction in \cite{Schied1999} but under far more restrictive conditions on the function 
 $h$ than our Assumption \ref{main.assumption}. 
 $\hfill\diamond$
  \end{rem}

\begin{rem} [global supersolutions]
  If there exists an $0<H\in C^{2,\eta}(D)\times C^{1,\eta}(\mathbb R^+)$ which is a {\it global} super-solution to the  backward equation, i.e.  $$\dot{H}+(L+{\beta}) H\le 0\hspace{1cm}
  \mbox{in }D\times (0,\infty),$$ then there is a shorter way to proceed, since instead of working first with finite time horizons, one can work directly with $[0,\infty)$. Indeed,   similarly to what we have done above, the time-inhomogeneous (sub)critical measure-valued process $\widehat X$ corresponding to the quadruple $(L+a\frac{\nabla H}{H}\cdot \nabla,(\dot{H}+(L+{\beta}) H)/H,\alpha H;D)$  is well defined, because the potential term is non-positive. Just like before,  the measure-valued process $X_t:=H^{-1}(\cdot,t)\widehat X_t$ corresponds to the quadruple $(L,\beta,\alpha;D)$.

When $\lambda_c <\infty$, 
let $h>0$ be a $C^2$-function on $D$ with $(\LL +\beta ) h = \lambda h$ for some $\lambda \geq \lambda_c$.
Then $H(x,t):=e^{-\lambda  t}h(x)$
is  a global {\it solution} to the  backward equation
 in $D\times (0, \infty)$;  
when $\lambda_c=\infty$, global backward super-solution might not exists. $\hfill\diamond$
 \end{rem}
\begin{rem}

In \cite{AOP99}, instead of Property (\ref{down}), the  continuity on $C_{b}^{+}(D)$
with respect to bounded convergence was used. Clearly, if one knows that (\ref{down}) (together with the other  properties) guarantees the existence
of $\widehat{P}^{x,s}$ for all $x,s$, then this latter continuity
property will guarantee it too: if $0\le g_{n}\uparrow g$ and $g$ is
bounded, then the convergence is bounded. In \cite{AOP99},  in
fact,  the continuity of the semigroup with respect to bounded
convergence was proved.$\hfill\diamond$
\end{rem}

\section{Super-exponential growth when $\lambda_c=\infty$} \label{S:4}

When the generalized principal eigenvalue is infinite, the local mass of the superprocess  
can no longer grow at an exponential rate, as the following result shows.

\begin{thm}
Assume that $\mathbf{0}\neq\mu\in \mathcal{M}_c(D)$ and $\lambda_c=\infty$. Then, for any $\lambda\in\mathbb R$ and any open
set $\emptyset\neq B\Subset D$,
\begin{equation}\label{limsup}
P_{\mu}\left(\limsup_{t\to\infty}e^{-\lambda t}X_t(B)=\infty\right)>0.
\end{equation}
\end{thm}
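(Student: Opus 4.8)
The plan is to argue by contradiction: if \eqref{limsup} failed, then $P_\mu\left(\limsup_{t\to\infty}e^{-\lambda t}X_t(B)<\infty\right)=1$, and I would derive from this a positive harmonic-type function witnessing $\lambda_c<\infty$, contradicting the hypothesis. More precisely, I would first reduce to a convenient bounded open set $B$ with $\overline B\subset D$ and smooth boundary, and work with the log-Laplace equation \eqref{log.Laplace}. The key object is the function
$$
w_\lambda(x):=-\log P_{\delta_x}\left(\limsup_{t\to\infty}e^{-\lambda t}X_t(B)<\infty\right),
$$
or a suitable truncated/regularized variant of it. If \eqref{limsup} fails for some starting measure $\mu\in\mathcal M_c(D)$, then by the branching property $P_{\delta_x}\left(\limsup_t e^{-\lambda t}X_t(B)<\infty\right)=1$ for $\mu$-a.e.\ $x$, and a standard argument (using that $X$ charges a neighbourhood of any point with positive probability, plus the Markov property) upgrades this to \emph{all} $x\in D$, so $w_\lambda\equiv 0$ would be forced — I want to instead localize so that $w_\lambda$ is a genuinely finite nonnegative function and extract from it the desired eigenfunction-type bound.

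The main steps, in order, would be: (1) Fix $n$ large, let $\theta_n\in C_c^+(D)$ approximate $n\mathbf 1_B$, and consider $v_n(x,t):=S_t(\theta_n)(x)$, the minimal solution of \eqref{cum.eq}. By monotonicity $v_n(\cdot,t)$ increases in $n$; set $v_\infty(x,t):=\lim_n v_n(x,t)=-\log P_{\delta_x}(X_t(B)=0)$ (after passing $\theta_n\uparrow\infty\cdot\mathbf 1_B$), the "extinction on $B$" cumulant. (2) Use the Markov property at times along a sequence $t_k\to\infty$ together with the assumed a.s.\ finiteness of $\limsup_t e^{-\lambda t}X_t(B)$ to produce, for suitable constants, a function $x\mapsto u(x)$ that is a positive subsolution (or supersolution, in the appropriate sense) of $(L+\beta-\lambda)u=0$ on a relatively compact subdomain — essentially, one shows $e^{\lambda t}T_t$-type growth of a relevant quantity is controlled, which in the language of \cite{Pinskybook} gives a positive solution of $(L+\beta-\lambda)u\le 0$, hence $\lambda\ge \lambda_c(L+\beta,U)$ for $U\Subset D$. (3) Let $U\uparrow D$ and invoke the standard fact (Chapter 4 of \cite{Pinskybook}) that $\lambda_c(L+\beta,D)=\lim_{U\uparrow D}\lambda_c(L+\beta,U)\le\lambda$, contradicting $\lambda_c=\infty$.

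The hard part will be step (2): converting the \emph{probabilistic} statement "$e^{-\lambda t}X_t(B)$ has finite $\limsup$ a.s." into an \emph{analytic} statement about existence of a positive (sub/super)solution of the linear Schrödinger equation with eigenvalue $\lambda$. The natural route is to note that if $e^{-\lambda t}X_t(B)$ stays bounded, then for large $C$ the events $\{X_t(B)\le Ce^{\lambda t}\}$ have probability bounded away from $0$ uniformly along $t_k$, and via $P_{\delta_x}(X_t(B)\le N)=\lim_{g\uparrow\infty\cdot\mathbf 1_B}E_{\delta_x}e^{-\langle g,X_t\rangle}\cdot(\text{correction})$ — more cleanly, via $E_{\delta_x}[X_t(B)]\ge$ (something), but the first moment is exactly $T_t\mathbf 1_B(x)$, which under $\lambda_c=\infty$ grows faster than any $e^{\lambda t}$; one then needs a \emph{second-moment} or concentration input to turn divergence of the mean into the claimed a.s.\ divergence of the $\limsup$, i.e.\ to show $X_t(B)$ is not typically tiny while its mean explodes. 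I expect the cleanest implementation uses the identity $-\log P_{\delta_x}(X_t(B)=0)=v_\infty(x,t)$ together with a lower bound $v_\infty(x,t)\gtrsim \min(1,\,c\,T_t\mathbf 1_B(x))$ coming from the quadratic nonlinearity (the solution of $\dot u = Lu+\beta u-\alpha u^2$ started from $\infty\cdot\mathbf 1_B$ dominates $\min(1,cT_t\mathbf 1_B)$ on compacts), so that $P_{\delta_x}(X_t(B)>0)\to 1$-type estimates combined with a Borel–Cantelli / Markov-property argument along $t_k$ force $e^{-\lambda t_k}X_{t_k}(B)\to\infty$ with positive probability — giving the contradiction directly without even needing the full analytic dictionary. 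I would try that route first and fall back on the $\lambda_c(L+\beta,U)$ comparison only if the quantitative lower bound on $v_\infty$ proves awkward.
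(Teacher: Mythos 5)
Your proposal has a genuine gap at exactly the step you yourself flag as the hard one, and the specific route you sketch would not close it. The contradiction scheme needs to convert the hypothesis ``$e^{-\lambda t}X_t(B)$ has a.s.\ finite $\limsup$'' either into an analytic object (a positive function with $(L+\beta-\lambda)u\le 0$ on subdomains) or directly into a contradiction, and neither conversion is carried out. The concrete mechanism you propose --- a lower bound $v_\infty(x,t)\gtrsim\min(1,\,c\,T_t\mathbf 1_B(x))$ for $v_\infty(x,t)=-\log P_{\delta_x}(X_t(B)=0)$, followed by Borel--Cantelli along $t_k$ --- cannot work even if that (unproved) bound held: it only controls the events $\{X_{t_k}(B)>0\}$, i.e.\ non-degeneracy of the mass in $B$, and says nothing about its size; $P_{\delta_x}(X_t(B)>0)\to 1$ is perfectly compatible with $e^{-\lambda t}X_t(B)\to 0$ a.s. Likewise, divergence of the first moment $T_t\mathbf 1_B(x)$ (which indeed holds when $\lambda_c=\infty$) cannot by itself be upgraded to divergence of $X_t(B)$: the huge mean can be carried by an event of vanishing probability. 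You acknowledge that a second-moment or concentration input is needed, but none is supplied, and for unbounded $\beta$ the second moment, which involves $\int_0^t T_s\bigl(\alpha\,(T_{t-s}\mathbf 1_B)^2\bigr)\,\mathrm{d}s$, is not under control (it may be infinite), so a Paley--Zygmund route is not available either. The fallback of extracting a positive supersolution with eigenvalue $\lambda$ from the a.s.\ boundedness of $e^{-\lambda t}X_t(B)$ is only stated as a hope; that is precisely the hard direction and is left open. So the engine of the proof is missing.

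For comparison, the paper argues directly rather than by contradiction, and the missing quantitative ingredient is supplied by the finiteness of $\lambda_c$ on relatively compact subdomains rather than by moment estimates for $X_t(B)$. Since $\lambda_c(L+\beta,B^*)\uparrow\lambda_c(L+\beta,D)=\infty$ as $B^*\uparrow D$, one chooses $B^*\Subset D$ with smooth boundary, containing $\mathrm{supp}(\mu)$, with $\lambda^*:=\lambda_c(L+\beta,B^*)>\lambda\vee 0$. On $B^*$ the Dirichlet principal eigenpair $(\lambda^*,\phi^*)$ exists, and using the exit measure $X^{t,B^*}$ from $B^*\times[0,t)$ one forms $M_t:=e^{-\lambda^*t}\langle\phi^{*,t},X^{t,B^*}\rangle/\langle\phi^*,\mu\rangle$, which (by Lemma 6 of \cite{EK2004}, using $\lambda^*>0$) is a continuous mean-one martingale whose limit is strictly positive with positive probability. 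Comparing $\phi^*$ with constants on $B^*$ then forces $\liminf_t e^{-\lambda^* t}X_t(B^*)>0$, hence $e^{-\lambda t}X_t(B^*)\to\infty$, with positive probability, and a final transfer step replaces $B^*$ by an arbitrary nonempty open $B\Subset D$. If you want to salvage your step (2), this eigenfunction-on-a-compact-subdomain martingale is exactly the device you are missing.
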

\begin{proof} We are following the proof of Theorem 3(ii) in \cite{EK2004}.

We may assume without the loss of generality that $\lambda>0$. Since $\lambda_c=\infty$, by standard theory, there exists a large enough $B^*\Subset D$ with a smooth boundary so that $$\lambda^*:=\lambda_c(L+\beta,B^*)>\lambda.$$
In addition, we can choose $B^*$ large enough so that $\mathrm{supp}(\mu)\Subset B^*.$

 Let the eigenfunction $\phi^*$ satisfy $(L+\beta-\lambda^*)\phi^*=0$, $\phi^*>0$ in $B^*$ and $\phi^*=0$ on $\partial B^*.$
Let $X^{t,B^*}$ denote the exit measure\footnote{See \cite{Dynkin2002book} for more on the exit measure.} from $B^*\times [0,t)$. We would like to integrate $\phi^*$ against $X^{t,B^*}$, so formally we define
for each fixed $t\ge 0$, $\phi^{*,t}:B^*\times [0,t]\to[0,\infty)$ such that $\phi^{*, t}(\cdot,u)=\phi^* (\cdot)$ for each $u\in [0,t]$.
Then $\langle\phi^{*,t},X^{t,B^*}\rangle$ is defined in the obvious way.   Now define
$$
M_t^{\phi^*}:=e^{-\lambda^* t} \langle\phi^{*,t},X^{t,B^*}\rangle/\langle\phi^*,\mu\rangle.
$$
 Since $\lambda^* > 0$, Lemma 6 in \cite{EK2004} implies that $M^{\phi^*}_t$ is a continuous mean one $\P_{\mu}$-martingale and that $P_{\mu}\left(\lim_{t\to\infty}M_t>0\right)>0.$
Since $\phi^*\geq 1/c>0$ on $B^*$, we have
$$
X_t(B^*)\ge c\, \langle \phi^*|_{B^*},X_t\rangle\ge c \, \langle \phi^{*,t},X^{t,B^*}\rangle,\qquad \P_{\mu} \hbox{-a.s.}.
$$
Hence
\begin{align*}
  \P_{\mu}\left(\lim_{t\to\infty}e^{-\lambda t}X_t(B^*)=\infty\right)
  &\ge \P_{\mu}\left(\liminf_{t\to\infty}e^{-\lambda^* t}X_t(B^*)>0\right) \\
& \ge \P_{\mu}\left(\lim_{t\to\infty}M_t>0\right)>0.
\end{align*}
Now let $B$ be {\it any} open set with $\emptyset \neq B\Subset D$. Then (\ref{limsup}) follows exactly as in the end of the proof of Theorem 3(ii) in \cite{EK2004}, on p. 93.
\end{proof}

 The rest of the paper is to investigate the super-exponential growth rate for certain 
superprocesses  with infinite generalized principal eigenvalues.

\section{Conditions and Examples}\label{S:5}

\subsection{Brownian motion with $|x|^{\ell}$ potential}

For the next example, we will need the following result.
\begin{lem}
\label{lem:large deviation} Let $B$ denote standard Brownian motion in $\R^d$ with $d\geq 1$
 and let $\ell>0.$ Then

\begin{equation}\label{follows.from.Schilder}
\log\mathbb{P}\left(\int_{0}^{1}|B_{s}|^{\ell}\,\mathrm{d}s\ge K\right)=-\frac{1}{2}c_{\ell}K^{2/\ell}\left(1+o(1)\right),
\end{equation}
as \textbf{$K\uparrow\infty.$} Furthermore, $c_{1}=3.$ \end{lem}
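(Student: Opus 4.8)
\textbf{Proof strategy for Lemma \ref{lem:large deviation}.}

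The plan is to deduce \eqref{follows.from.Schilder} from Schilder's theorem, the large deviation principle for Brownian motion under small-noise scaling. First I would rescale: writing $B^{\varepsilon}_s := \sqrt{\varepsilon}\, B_s$, note that for a parameter $\varepsilon>0$ to be chosen, the event $\{\int_0^1 |B_s|^\ell\,\mathrm{d}s \ge K\}$ is, after the substitution, comparable to $\{\int_0^1 |B^{\varepsilon}_s|^\ell\,\mathrm{d}s \ge \varepsilon^{\ell/2} K\}$. Choosing $\varepsilon = K^{-2/\ell}$ makes the right-hand side equal to $1$, so that $\mathbb{P}(\int_0^1 |B_s|^\ell\,\mathrm{d}s \ge K) = \mathbb{P}(\int_0^1 |B^{\varepsilon}_s|^\ell\,\mathrm{d}s \ge 1)$ with $\varepsilon \downarrow 0$ as $K\uparrow\infty$. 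Schilder's theorem says $\{B^{\varepsilon}\}$ satisfies an LDP on $C([0,1];\R^d)$ (paths started at $0$) with speed $1/\varepsilon$ and good rate function $I(\varphi) = \frac12 \int_0^1 |\dot\varphi_s|^2\,\mathrm{d}s$ for absolutely continuous $\varphi$ with $\varphi_0=0$ (and $+\infty$ otherwise). Since $\varphi \mapsto \int_0^1 |\varphi_s|^\ell\,\mathrm{d}s$ is continuous in the sup-norm, Varadhan-type reasoning (or directly the LDP upper/lower bounds applied to the closed, resp. open, sets $\{\int_0^1 |\varphi_s|^\ell\,\mathrm{d}s \ge 1\}$ and its interior) gives
\begin{equation*}
\lim_{\varepsilon\downarrow 0} \varepsilon \log \mathbb{P}\left(\int_0^1 |B^{\varepsilon}_s|^\ell\,\mathrm{d}s \ge 1\right) = -\inf\left\{ \tfrac12\int_0^1 |\dot\varphi_s|^2\,\mathrm{d}s \ : \ \varphi_0 = 0,\ \int_0^1 |\varphi_s|^\ell\,\mathrm{d}s \ge 1 \right\} =: -\tfrac12 c_\ell.
\end{equation*}
Substituting back $\varepsilon = K^{-2/\ell}$ yields $\log \mathbb{P}(\int_0^1 |B_s|^\ell\,\mathrm{d}s \ge K) = -\tfrac12 c_\ell K^{2/\ell}(1+o(1))$, which is \eqref{follows.from.Schilder}. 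One should double-check that the boundary of the constraint set has no mass issue, i.e. that the infimum over the closed set equals the infimum over the open set; this holds because a scaling $\varphi \mapsto (1+\delta)\varphi$ strictly increases $\int_0^1|\varphi_s|^\ell\,\mathrm{d}s$ while changing the energy continuously, so the constrained infimum is approached from within the open set.

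For the explicit constant in dimension considerations, note that by rotational symmetry and convexity the optimal path is one-dimensional (a minimizing $\varphi$ may be taken to lie on a line through the origin), so computing $c_\ell$ reduces to the scalar variational problem
\begin{equation*}
c_\ell = \inf\left\{ \int_0^1 |\dot\varphi_s|^2\,\mathrm{d}s \ : \ \varphi:[0,1]\to\R,\ \varphi_0 = 0,\ \int_0^1 |\varphi_s|^\ell\,\mathrm{d}s = 1 \right\}.
\end{equation*}
The Euler--Lagrange equation is $\ddot\varphi = -\tfrac{\ell}{2\Lambda}\,|\varphi|^{\ell-1}\mathrm{sgn}(\varphi)$ for a Lagrange multiplier $\Lambda$, with natural boundary condition $\dot\varphi(1)=0$ (since $\varphi(1)$ is free) and $\varphi(0)=0$. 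For $\ell = 1$ this is $\ddot\varphi = $ const on the region where $\varphi>0$, i.e. $\varphi$ is a downward parabola with $\varphi(0)=0$, $\dot\varphi(1)=0$; writing $\varphi(s) = a s(2-s)$ one finds $\int_0^1 \varphi = \tfrac{2a}{3}$ and $\int_0^1 \dot\varphi^2 = \tfrac{4a^2}{3}$, so normalizing $\tfrac{2a}{3}=1$ gives $a = \tfrac32$ and $\int_0^1 \dot\varphi^2 = 3$, hence $c_1 = 3$.

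The main obstacle I anticipate is not the LDP machinery — that is standard — but verifying the precise logarithmic asymptotics with the correct leading constant rather than merely an exponential-order bound; in particular one must confirm that the LDP upper and lower bounds match (no gap between the closed-set and open-set infima, handled by the scaling argument above) and that the continuous functional $\int_0^1|\cdot|^\ell$ interacts well with the good rate function, which it does because sub-level sets of $I$ are compact in $C([0,1];\R^d)$ and the functional is bounded on them. A secondary, more computational point is the explicit evaluation $c_1 = 3$, which requires correctly identifying the free-endpoint boundary condition at $s=1$; getting this wrong (e.g. imposing $\varphi(1)=0$ instead) changes the constant.
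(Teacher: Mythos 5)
Your proposal is correct, and the large-deviation part is essentially the paper's argument: the paper likewise obtains \eqref{follows.from.Schilder} by taking $\varepsilon=K^{-2/\ell}$ in Schilder's Theorem together with the Contraction Principle, arriving at the same variational characterization $c_{\ell}=\inf\{\int_{0}^{1}|\dot f|^{2}\,\mathrm{d}s:\ f(0)=0,\ \int_0^1|f|^{\ell}\,\mathrm{d}s=1\}$; your explicit treatment of the closed-set/open-set infima via the dilation $\varphi\mapsto(1+\delta)\varphi$ is exactly the point that makes ``follows directly'' legitimate. Where you genuinely diverge is the evaluation $c_{1}=3$: the paper does not compute the variational problem at all, but instead quotes the sharp constants from the literature on sharp Poincar\'e-type inequalities (taking $p=p'=2$ in the cited formula), whereas you solve the Euler--Lagrange equation with the free-endpoint condition $\dot\varphi(1)=0$ and get the parabola $\varphi(s)=\tfrac32 s(2-s)$, yielding $3$ directly. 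Your route is more self-contained; to make it airtight you should phrase the reduction to a scalar problem via $r(s):=|\varphi(s)|$, which satisfies $r(0)=0$, has the same value of $\int_0^1 r^{\ell}$, and obeys $|\dot r|\le|\dot\varphi|$ a.e.\ since the Euclidean norm is $1$-Lipschitz (this replaces the vaguer ``rotational symmetry and convexity''), and note that optimality of the parabola follows from convexity of the energy on the convex feasible set (or, even more simply, from $\int_0^1\varphi\,\mathrm{d}s=\int_0^1\dot\varphi(u)(1-u)\,\mathrm{d}u\le\tfrac{1}{\sqrt3}\|\dot\varphi\|_{2}$ with equality iff $\dot\varphi\propto 1-u$). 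The paper's citation buys brevity; your computation buys independence from the external sharp-inequality references.
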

\begin{proof}
First, the asymptotics (\ref{follows.from.Schilder}) follows directly by taking $\varepsilon=K^{-2/\ell}$ in Schilder's Theorem (Theorem 5.2.3 in \cite{DZbook})  and using  the Contraction Principle (Theorem 4.2.1 in \cite{DZbook}). We then get
 \[
c_{\ell}=\inf \left\{ \int_{0}^{1}|\dot{f}(s)|^{2}\,\mathrm{d}s: 
f\in C([0,1] \hbox{ with }  f(0)=0 \hbox{ and } \|f\|_\ell=1\right\} ,  
\]
where $\|f\|_\ell:=(\int_{0}^{1}|f(s)|^{\ell}\:\mathrm{d}s)^{1/\ell}$.
To determine the value
of $c_{1}$, one can utilize the results in \cite{SharpPoincare1,SharpPoincare2}:
 by taking $p:=2$ and $ p':=\frac{1}{1-\frac{1}{p}}=2$ in 
\cite[p. 2311, line -8]{SharpPoincare1}
and exploiting formula (1.7) there to show that $c_{1} =3$.  
\end{proof}

\begin{rem}
One can actually get a crude upper estimate for all $\ell>0$ without
using Schilder's Theorem 
but using the reflection principle for Brownian motion instead. For simplicity, we illustrate this for $d=1$.
Let $R_{t}:=\max_{s\in[0,t]}|B_{s}|.$   Then
$$\mathbb{P}\left(\int_{0}^{1}|B_{s}|^{\ell}\,\mathrm{d}s\ge K\right)\le \mathbb{P}\left(R_{1}^{\ell}\ge K\right)
 \leq  4\mathbb{P}\left(B_{1} \ge K^{1/\ell} \right)
 \leq \frac{4}{K}e^{-\frac{1}{2}K^{2/\ell}}.
$$
See, e.g., \cite[Theorem 1.2.3]{Durrettbook}  for the last inequality.  $\diamond$ 
\end{rem}

\begin{example}
\label{BM.with.p-potential}Let $d\geq 1$ and $L=\frac{1}{2}\Delta$, $\beta(x)=a|x|^{\ell}$ with $a,\ell>0$, and $\alpha>0$.   From
Lemma \ref{slight.unbddness}, it is clear that (\ref{gpe.finite}) will not hold, no matter how slowly $\beta$ grows. On the other hand, letting $h\equiv1$, we have the following claim.
\begin{claim}\label{finite.for.Schrodinger}
There are three cases.
\begin{itemize}
\item[(i)] If $0<\ell <2$,  then $T^{\frac{1}{2}\Delta+\beta}_t 1(\cdot)<\infty$
for every $t>0$.

\item[(ii)] If $\ell = 2$, 
then there is some function $t_0=t_0(x)$ on $\R^d$ that is bounded between two positive
constants so that $T^{\frac{1}{2}\Delta+\beta}_t1 (x) <\infty$ for every $t<t_0 (x)$ and $T^{\frac{1}{2}\Delta+\beta}_t1 (x) \equiv \infty$ for every $t>t_0 (x)$.

\item[(iii)] If $\ell > 2$,   then $T^{\frac{1}{2}\Delta+\beta}_t1 \equiv \infty$ for every $t>0$.
\end{itemize}
\end{claim}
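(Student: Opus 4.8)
The plan is to analyze the Feynman--Kac functional $T_t^{\frac12\Delta+\beta}1(x) = \mathbb{E}_x\big[\exp\big(a\int_0^t |B_s|^\ell\,\mathrm{d}s\big)\big]$ by expanding the exponential into its power series and estimating the moments $\mathbb{E}_x\big[\big(\int_0^t|B_s|^\ell\,\mathrm{d}s\big)^n\big]$, comparing the resulting series against a geometric-type series. By Brownian scaling, $\int_0^t |B_s|^\ell\,\mathrm{d}s$ under $\mathbb{P}_x$ has the same law as $t\int_0^1 |B^{(x,t)}_u|^\ell\,\mathrm{d}u$ where $B^{(x,t)}_u = B_{tu}/\sqrt t$ is, started from $x$, a Brownian motion started from $x/\sqrt t$ scaled spatially by $\sqrt t$; more precisely $\int_0^t|B_s|^\ell\,\mathrm ds \overset{d}{=} t^{1+\ell/2}\int_0^1 |\widetilde B_u + x/\sqrt t|^\ell\,\mathrm du$ with $\widetilde B$ standard from $0$. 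This already isolates the exponent $1+\ell/2$ on $t$ as the source of the trichotomy: the tail bound from Lemma~\ref{lem:large deviation} (or the cruder reflection-principle estimate in the Remark) gives $\mathbb{P}_x\big(\int_0^t|B_s|^\ell\,\mathrm ds \ge K\big) \le \exp\big(-\tfrac12 c_\ell (K/C_{x,t})^{2/\ell}(1+o(1))\big)$ for a constant $C_{x,t}$ absorbing the scaling and the shift by $x$, hence $\mathbb{E}_x\big[\exp\big(a\int_0^t|B_s|^\ell\,\mathrm ds\big)\big] = a\int_0^\infty e^{aK}\,\mathbb{P}_x\big(\int_0^t|B_s|^\ell \ge K\big)\,\mathrm dK + 1$, and the integrand behaves like $\exp\big(aK - \tfrac12 c_\ell C_{x,t}^{-2/\ell} K^{2/\ell}(1+o(1))\big)$.

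Case (i), $0<\ell<2$: here $2/\ell > 1$, so the quadratic-type (superlinear) decay $K^{2/\ell}$ beats the linear growth $aK$ for every value of $a$ and every $t$, and the integral converges; thus $T_t^{\frac12\Delta+\beta}1(x)<\infty$ for all $t>0$ and all $x$. (Equivalently, one checks the power series $\sum_n \frac{a^n}{n!}\mathbb{E}_x[(\int_0^t|B_s|^\ell)^n]$ has infinite radius of convergence because the moments grow like $(n!)^{\ell/2}$ times a geometric factor.) Case (iii), $\ell>2$: now $2/\ell<1$, the exponent is $aK - (\text{const})K^{2/\ell}$, which tends to $+\infty$, so the integral diverges for every $t>0$ and every $x$; equivalently the moments grow too fast for any positive radius of convergence. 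The nontrivial case is (ii), $\ell=2$: then $2/\ell = 1$ and the competition is genuinely between $aK$ and $\tfrac12 c_2 C_{x,t}^{-1}K$, i.e. between two linear terms, so finiteness hinges on the precise constant. For $\ell=2$ one can in fact compute $T_t 1(x)$ explicitly: $\int_0^t |B_s|^2\,\mathrm ds$ is a quadratic functional of Brownian motion, and $\mathbb{E}_x\big[\exp\big(a\int_0^t|B_s|^2\,\mathrm ds\big)\big]$ is given by the classical Cameron--Martin / Mehler formula, namely $\big(\cos(\sqrt{2a}\,t)\big)^{-d/2}\exp\big(\tfrac{|x|^2\sqrt{2a}}{2}\tan(\sqrt{2a}\,t)\big)$ for $t < \tfrac{\pi}{2\sqrt{2a}}$, and $+\infty$ once $\sqrt{2a}\,t \ge \tfrac{\pi}{2}$. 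This gives $t_0(x) \equiv \tfrac{\pi}{2\sqrt{2a}}$ (in fact independent of $x$, hence trivially bounded between two positive constants), proving (ii).

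I would organize the write-up as: (a) reduce everything to the one-Brownian-motion quantity via scaling and the layer-cake formula; (b) dispatch (i) and (iii) immediately from Lemma~\ref{lem:large deviation}, noting the sign of $2/\ell - 1$; (c) for (ii), either invoke the explicit Cameron--Martin formula for the exponential of the squared $L^2$-norm of Brownian motion, or — if one prefers to stay soft — use the sharp constant $c_2$ together with a matching lower bound on the tail probability (from the large-deviation rate function, which for $\ell=2$ is exactly the ground-state eigenvalue of $-\tfrac12\Delta - a|x|^2$ type problem on $[0,1]$ with appropriate boundary conditions) to pin down the threshold $t_0(x)$ and verify it is bounded above and below by positive constants. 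The main obstacle is case (ii): one must show not merely that a finite threshold $t_0(x)$ exists but that the critical time is finite and positive uniformly in $x$ — the cleanest route is the explicit formula, but if that is deemed too special, the argument requires pairing the upper tail bound with a comparable lower bound for $\mathbb{P}_x(\int_0^t |B_s|^2 \ge K)$, which needs the large-deviation principle in both directions rather than just the upper bound recorded in Lemma~\ref{lem:large deviation}.
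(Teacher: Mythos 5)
Your proposal is correct, and for cases (i) and (iii) it is essentially the paper's argument: reduce to the law of $\int_0^1|B_r|^\ell\,\mathrm{d}r$ by Brownian scaling, apply the layer-cake formula, and compare $e^{aK}$ against the Schilder-type tail $e^{-\frac12 c_\ell K^{2/\ell}(1+o(1))}$ of Lemma \ref{lem:large deviation}; note that this lemma is stated as an asymptotic \emph{equality}, so the lower bound on the tail you worry about at the end is already available there (the reflection-principle bound of the remark, being one-sided, would indeed not suffice for divergence, but you only offered it as an aside). Where you genuinely diverge from the paper is case (ii) and the treatment of a general starting point $x$: the paper handles both uniformly by the crude comparison $2^{-\ell}|B_s|^\ell - |x|^\ell \le |x+B_s|^\ell \le 2^\ell\bigl(|x|^\ell+|B_s|^\ell\bigr)$, which transfers the $x=0$ trichotomy to all $x$ at the cost of replacing $a$ by $2^{\pm\ell}a$; for $\ell=2$ this is exactly why the paper can only assert that $t_0(x)$ lies between two positive constants. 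Your route through the Cameron--Martin/Mehler formula $\bigl(\cos(\sqrt{2a}\,t)\bigr)^{-d/2}\exp\bigl(\tfrac{|x|^2\sqrt{2a}}{2}\tan(\sqrt{2a}\,t)\bigr)$ buys the sharp threshold $t_0(x)\equiv \pi/(2\sqrt{2a})$, independent of $x$ (combined with monotonicity in $t$ to get $+\infty$ for $t\ge t_0$), which is stronger than what is claimed; the price is invoking (or justifying by analytic continuation/eigenfunction expansion) a formula special to the quadratic case, whereas the paper's soft comparison argument treats all $\ell$ and all $x$ with one estimate. Either way the claim follows, so the proposal stands.
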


Consequently, when $0<\ell <2$, not only the construction of the superprocess is guaranteed  by  Theorem \ref{main.thm}, but 
 in fact 
that {\it the expected total mass remains finite for all times}.

\bigskip

\noindent{\it Proof of Claim \ref{finite.for.Schrodinger}}.
Under $\P_0$, by  Brownian scaling, we have
$$ \int_0^t |B_s|^\ell \mathrm{d}s  = \int_0^1 |B_{tr}|^\ell \, t \mathrm{d}r
{\mathop {\ =\ }\limits^{ d }}
t^{1+ \ell/2 } \int_0^1 |B_r|^\ell \mathrm{d}r.
$$
Hence we have from above and \eqref{follows.from.Schilder} that
\begin{eqnarray} 
\left(T^{\frac12 \Delta +\beta}_t 1\right) (0) &=& 
\E_0 \left[ \exp \left( a \int_0^t |B_s|^\ell \mathrm{d}s\right) \right] \nonumber \\
&=& \int_1^\infty \P_0 \left( e^{a \int_0^t  |B_s|^\ell \mathrm{d}s}>x \right) \mathrm{d}x
\nonumber \\
&=& \int_1^\infty \P_0 \left( \int_0^t |B_s|^\ell \mathrm{d}s> (\log x)/a \right) \mathrm{d}x \nonumber \\
 &=& \int_1^\infty \P_0 \left( \int_0^1 |B_s|^\ell 
\mathrm{d}s> a^{-1} t^{-1- \ell/2 } \log x \right) \mathrm{d}x  \nonumber \\
&=& \int_0^\infty a t^{1+\ell/2} e^{a u  t^{1+\ell/2}   } \P_0
 \left(  \int_0^1 |B_s|^{\ell} \mathrm{d}s > u \right) \mathrm{d}u \nonumber \\
&=& \int_0^\infty  a  t^{1+\ell/2} e^{a u t^{1+\ell/2} } \left(
e^{-\frac12 c_\ell u^{2/\ell} (1+o(1))} \right) \mathrm{d}u. \label{e:5.2}
\end{eqnarray}
The claims now clearly follow from the last integral expression.

For general $x\in \R^d$, observe that
$$
 \left(T^{\frac12 \Delta +\beta}_t 1\right) (x) =
\E_x  \left[ \exp \left(\int_0^t a |B_s|^\ell \mathrm{d}s\right) \right]
= \E_0 \left[   \exp \left(\int_0^t a |x+B_s|^\ell \mathrm{d}s\right) \right],
$$
which is bounded between $c_t$ and $C_t$, where
\begin{align*} 
&c_t:=e^{-a |x|^\ell} \E_0 \left[ \exp \left(\int_0^t 2^{-\ell} a |x+B_s|^\ell \mathrm{d}s\right) \right]; \\
&C_t:=e^{2^\ell  a |x|^\ell }\E_0 \left[ \exp \left(\int_0^t 2^\ell a |B_s|^\ell \mathrm{d}s\right) \right].
\end{align*}
The claim is thus proved. \qed
\begin{rem}

The statements of Claim \ref{finite.for.Schrodinger} can be found in Sections 5.12-5.13 of  \cite{ItoMcKean}, but since they follow very easily from Lemma \ref{lem:large deviation} (which we need later anyway), we decided to present the above proof for the sake of being more self-contained.
\end{rem}

 When $\ell=1$ we have the following estimate,  which will be used later, in Example \ref{revisit}.

\begin{claim} Assume that $d=1$ and $\beta(x)=|x|$. Then 
 \begin{equation}\label{eq:two_sided_estimate}
 e^{ t^3/6}\le E_0|X_t|=(T^{\frac12 \Delta +\beta}_t 1)(0)=\mathbb{E}_{0}\exp\left(\int_{0}^{t}|B_{s}|\,\mathrm{d}s\right)\le 4 e^{t^{3}/2}.
 \end{equation}
\end{claim}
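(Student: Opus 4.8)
The plan is to prove the two-sided estimate in \eqref{eq:two_sided_estimate} for
$$
E_0|X_t| = \mathbb{E}_0\exp\Big(\int_0^t |B_s|\,\mathrm{d}s\Big)
$$
by treating the lower and upper bounds separately, exploiting the Brownian scaling identity $\int_0^t |B_s|\,\mathrm{d}s \stackrel{d}{=} t^{3/2}\int_0^1 |B_r|\,\mathrm{d}r$ established in the proof of Claim~\ref{finite.for.Schrodinger} (the case $\ell=1$).

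\medskip\noindent\textbf{Lower bound.} First I would apply Jensen's inequality to the convex function $x\mapsto e^{ax}$:
$$
\mathbb{E}_0\exp\Big(\int_0^t |B_s|\,\mathrm{d}s\Big) \ge \exp\Big(\mathbb{E}_0\int_0^t |B_s|\,\mathrm{d}s\Big) = \exp\Big(\int_0^t \mathbb{E}_0|B_s|\,\mathrm{d}s\Big).
$$
Now $\mathbb{E}_0|B_s| = \sqrt{2s/\pi}$, so $\int_0^t \mathbb{E}_0|B_s|\,\mathrm{d}s = \sqrt{2/\pi}\cdot \tfrac23 t^{3/2}$. This gives a bound of the form $\exp(c\, t^{3/2})$, which is \emph{not} the claimed $e^{t^3/6}$ — so Jensen against $\mathbb{E}_0|B_s|$ is too lossy. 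Instead, the right move is to restrict the expectation to a favorable event and use scaling in $t$ more cleverly, or simply to note the claimed lower bound $e^{t^3/6}$ has exponent cubic in $t$, which cannot come from $\mathbb{E}|B_s|\sim\sqrt s$. The correct approach: use the scaling identity and then bound $\mathbb{E}_0\exp(a t^{3/2} Z) \ge \exp(a t^{3/2}\,\mathbb{E}_0 Z)$ where $Z=\int_0^1|B_r|\,\mathrm{d}r$ — this again gives $t^{3/2}$. Since the target is $t^3/6$, the lower bound must instead be obtained by Jensen in a different variable: write $\int_0^t|B_s|\,\mathrm{d}s$ and condition, or better, use that $\exp$ is convex and apply Jensen \emph{with respect to the uniform measure on $[0,t]$ inside the exponent combined with Fubini}, i.e.
$$
\mathbb{E}_0\exp\Big(\int_0^t|B_s|\,\mathrm{d}s\Big) = \mathbb{E}_0\exp\Big(t\cdot\tfrac1t\int_0^t|B_s|\,\mathrm{d}s\Big)\ge \mathbb{E}_0\Big[\tfrac1t\int_0^t e^{t|B_s|}\,\mathrm{d}s\Big] = \tfrac1t\int_0^t \mathbb{E}_0 e^{t|B_s|}\,\mathrm{d}s,
$$
and then use $\mathbb{E}_0 e^{t|B_s|}\ge \mathbb{E}_0 e^{tB_s} = e^{t^2 s/2}$, so the integral is $\tfrac1t\int_0^t e^{t^2 s/2}\,\mathrm{d}s = \tfrac{2}{t^3}(e^{t^3/2}-1)$. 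This overshoots; one can also just keep $\tfrac1t\int_0^t e^{t^2s/2}\mathrm{d}s \ge \tfrac1t\cdot t\cdot e^{t^2\cdot(t/3)/2}$... the cleanest honest route giving exactly $e^{t^3/6}$ is: by Jensen applied to the probability measure $\mathrm{d}s/t$ on $[0,t]$, $\tfrac1t\int_0^t e^{t|B_s|}\mathrm{d}s \ge \exp\big(\tfrac1t\int_0^t t|B_s|\mathrm{d}s\big) = \exp(\int_0^t|B_s|\mathrm{d}s)$, which is circular; so instead simply use $\tfrac1t\int_0^t \mathbb{E}_0 e^{tB_s}\mathrm{d}s = \tfrac1t\int_0^t e^{t^2 s/2}\mathrm{d}s \ge e^{t^2 \bar s/2}$ for $\bar s$ the mean of $\mathrm{d}s/t$ on $[0,t]$, namely $\bar s = t/2$, giving $e^{t^3/4}\ge e^{t^3/6}$. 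I would write it as: $E_0|X_t|\ge \tfrac1t\int_0^t \mathbb{E}_0 e^{t|B_s|}\,\mathrm{d}s \ge \tfrac1t\int_0^t e^{t^2 s/2}\,\mathrm{d}s \ge e^{t^3/6}$, where the last step is Jensen for $\exp$ against $\mathrm{d}s/t$ using $\int_0^t (t^2 s/2)\,\mathrm{d}s/t = t^3/4\ge t^3/6$, or more simply just drops part of the integral.

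\medskip\noindent\textbf{Upper bound.} Here I would use the reflection-principle bound from the Remark following Claim~\ref{finite.for.Schrodinger}: with $R_t = \max_{s\le t}|B_s|$,
$$
\int_0^t |B_s|\,\mathrm{d}s \le t\,R_t,
$$
hence $E_0|X_t| = \mathbb{E}_0 e^{\int_0^t|B_s|\mathrm{d}s} \le \mathbb{E}_0 e^{t R_t}$. By scaling $R_t \stackrel{d}{=} \sqrt t\, R_1$, so this equals $\mathbb{E}_0 e^{t^{3/2} R_1}$. Then using the tail bound $\mathbb{P}_0(R_1 \ge u)\le 4\mathbb{P}_0(B_1\ge u) \le 2 e^{-u^2/2}$ (reflection principle plus the standard Gaussian tail $\mathbb{P}(B_1\ge u)\le \tfrac12 e^{-u^2/2}$), I would integrate:
$$
\mathbb{E}_0 e^{t^{3/2}R_1} = \int_0^\infty t^{3/2} e^{t^{3/2}u}\,\mathbb{P}_0(R_1 > u)\,\mathrm{d}u + \mathbb{P}_0(R_1\ge 0)\le 1 + 2\int_0^\infty t^{3/2} e^{t^{3/2}u - u^2/2}\,\mathrm{d}u.
$$
Completing the square, $t^{3/2}u - u^2/2 = \tfrac12 t^3 - \tfrac12(u-t^{3/2})^2$, so the integral is bounded by $2 e^{t^3/2}\int_{-\infty}^\infty t^{3/2} e^{-(u-t^{3/2})^2/2}\,\mathrm{d}u = 2 t^{3/2}\sqrt{2\pi}\, e^{t^3/2}$. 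Combining gives $E_0|X_t| \le 1 + 2\sqrt{2\pi}\,t^{3/2} e^{t^3/2}$, and then I would observe that $1 + 2\sqrt{2\pi}\,t^{3/2} \le 4 e^{t^3/2}$ is false for small $t$... so the polynomial prefactor $t^{3/2}$ needs to be absorbed more carefully. The clean fix is to not integrate by parts but write, for any $\theta\in(0,1)$, $e^{t^{3/2}u}\le e^{t^3/(2\theta)} e^{\theta u^2/2}$ (again by $ab\le \tfrac{a^2}{2\theta}+\tfrac{\theta b^2}{2}$ with $a = t^{3/2},\,b=u$), giving $\mathbb{E}_0 e^{t^{3/2}R_1}\le e^{t^3/(2\theta)}\,\mathbb{E}_0 e^{\theta R_1^2/2}$, and $\mathbb{E}_0 e^{\theta R_1^2/2} \le 1 + \int_0^\infty \theta u\, e^{\theta u^2/2}\cdot 2 e^{-u^2/2}\mathrm{d}u < \infty$ is a finite constant $c(\theta)$ for $\theta<1$. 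Taking $\theta$ close enough to $1$ so that $c(\theta)\le 4$ and noting $e^{t^3/(2\theta)}$ still isn't $e^{t^3/2}$... The genuinely correct clean argument, matching the constant $4e^{t^3/2}$, is: condition on the whole path differently, i.e.\ bound $\int_0^t|B_s|\mathrm{d}s$ by combining reflection with a direct union bound over dyadic levels, but most efficiently use the exact identity $\mathbb{E}_0 e^{\lambda R_1} = $ something computable, or use $\mathbb{E}_0 e^{t R_t}\le \mathbb{E}_0 e^{t(|B_t| + \text{overshoot})}$ via the strong Markov property at the hitting time. I will present the cleanest version: $\int_0^t |B_s|\,\mathrm{d}s \le tR_t$, $\mathbb{P}_0(R_t \ge r)\le 4\mathbb{P}_0(B_t\ge r) \le \tfrac{4}{r}\sqrt{t/(2\pi)}\, e^{-r^2/(2t)}\le \tfrac{4}{r}e^{-r^2/(2t)}$ for $r\ge \sqrt{t/(2\pi)}$, then split the integral $\mathbb{E}_0 e^{tR_t} = \int_0^\infty \mathbb{P}_0(e^{tR_t}>x)\mathrm{d}x$ at $x = e^{t\cdot r_*}$ with $r_* = t^2$ (the saddle point of $tr - r^2/(2t)$), obtaining the $e^{t^3/2}$ from the exponent and keeping the constant $4$ from the prefactor $\tfrac4r\le \tfrac{4}{r_*}$ combined with the width of the integral.

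\medskip\noindent\textbf{Main obstacle.} The genuine difficulty is getting the \emph{exact} constants — exponent $1/6$ on the lower side and prefactor $4$ with exponent $1/2$ on the upper side — rather than just the order $\exp(\Theta(t^3))$. The lower bound's $1/6$ is delicate because the naive Jensen estimate $\mathbb{E}e^{\int|B_s|} \ge e^{\int \mathbb{E}|B_s|}$ gives only $\exp(c t^{3/2})$, so one must Jensen in a way that produces the $t^2 s/2$ Gaussian-MGF term and then integrate it against $\mathrm{d}s/t$ with a deliberately wasteful lower bound on the mean $\int_0^t s\,\mathrm{d}s/t$; the value $t/3$ (not $t/2$) arises naturally if one instead writes $\int_0^t|B_s|\mathrm{d}s \ge \int_0^t B_s\,\mathrm{d}s$ directly and uses that $\int_0^t B_s\,\mathrm{d}s$ is Gaussian with variance $\int_0^t\int_0^t (s\wedge u)\,\mathrm{d}s\,\mathrm{d}u = t^3/3$, so $\mathbb{E}_0 e^{\int_0^t B_s\mathrm{d}s} = e^{t^3/6}$ \emph{exactly} — that is the clean route for the lower bound: $E_0|X_t| = \mathbb{E}_0 e^{\int_0^t|B_s|\mathrm{d}s}\ge \mathbb{E}_0 e^{\int_0^t B_s\mathrm{d}s} = \exp\big(\tfrac12\mathrm{Var}(\int_0^t B_s\mathrm{d}s)\big) = e^{t^3/6}$. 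For the upper bound, the obstacle is controlling the polynomial correction factor uniformly in $t$ so it fits under the constant $4$; I expect this to require a slightly careful saddle-point split of the tail integral rather than a crude completion of the square. I would organize the final proof as: (1) lower bound via $|B_s|\ge B_s$ and the Gaussian MGF of $\int_0^t B_s\,\mathrm{d}s$; (2) upper bound via $\int_0^t|B_s|\mathrm{d}s\le tR_t$, the reflection tail bound, and an explicit integration with saddle point $r = t^2$.
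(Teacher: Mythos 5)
Your lower bound, in its final form, is exactly the paper's argument: bound $\int_0^t|B_s|\,\mathrm{d}s\ge\int_0^t B_s\,\mathrm{d}s$, note that $\int_0^t B_s\,\mathrm{d}s$ is centered Gaussian with variance $t^3/3$ (the paper gets this via $\int_0^t B_s\,\mathrm{d}s=\int_0^t(t-s)\,\mathrm{d}B_s$, you via the covariance double integral), and conclude $\mathbb{E}_0 e^{\int_0^t B_s\,\mathrm{d}s}=e^{t^3/6}$. One caveat about the exploratory material you left in: the chain $\mathbb{E}_0\exp\bigl(t\cdot\tfrac1t\int_0^t|B_s|\,\mathrm{d}s\bigr)\ge\mathbb{E}_0\bigl[\tfrac1t\int_0^t e^{t|B_s|}\,\mathrm{d}s\bigr]$ has Jensen pointing the wrong way (convexity gives $\exp$ of an average $\le$ the average of $\exp$, so this produces an upper, not a lower, bound); since you discard that route it does not affect your final plan, but it should not survive into a written proof.

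The upper bound, however, has a genuine gap: you assemble the right ingredients ($\int_0^t|B_s|\,\mathrm{d}s\le tR_t$, the reflection-principle factor $4$, the Gaussian moment generating function $\mathbb{E}_0e^{tB_t}=e^{t^3/2}$), but none of your three attempts actually yields the bound $4e^{t^3/2}$. The completion-of-square version leaves an unabsorbable prefactor $2\sqrt{2\pi}\,t^{3/2}$; the $ab\le\frac{a^2}{2\theta}+\frac{\theta b^2}{2}$ version degrades the exponent to $t^3/(2\theta)>t^3/2$; and the final saddle-point sketch does not close either, because if you split $\int_0^\infty\P_0(e^{tR_t}>x)\,\mathrm{d}x$ at $x_*=e^{tr_*}$ with $r_*=t^2$, the contribution below the split is only trivially bounded by $x_*=e^{t^3}$, which dwarfs $e^{t^3/2}$. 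The missing idea, which is how the paper does it, is to keep the comparison at the level of the layer-cake formula without ever differentiating the tail: write
\begin{equation*}
\mathbb{E}_0 e^{\int_0^t|B_s|\,\mathrm{d}s}\le 1+\int_1^\infty\P_0\bigl(tR_t>\log x\bigr)\,\mathrm{d}x\le 1+4\int_1^\infty\P_0\bigl(e^{tB_t}>x\bigr)\,\mathrm{d}x,
\end{equation*}
and then absorb the $1$ using $\P_0\bigl(e^{tB_t}>x\bigr)\ge\P_0(B_t\ge0)=\tfrac12$ for $0<x<1$, so that the right-hand side is at most $4\int_0^\infty\P_0\bigl(e^{tB_t}>x\bigr)\,\mathrm{d}x=4\,\mathbb{E}_0e^{tB_t}=4e^{t^3/2}$. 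This way the reflection constant $4$ passes through exactly, with no polynomial correction and no loss in the exponent.
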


\begin{proof}
Recall that $R_t:=\max_{s\leq t} |B_s|$. By the symmetry and the reflection principle for Brownian motion, 
 $$
\P_0 (R_t>x)\leq 2 \P_0 \left(\max_{s\in [0, t]} B_s >x \right) =4\P_0 (B_t>x)
\quad \hbox{for every } x>0.
 $$ 
Hence
\begin{eqnarray*}
\mathbb{E}_{0}\exp\left(\int_{0}^{t}|B_{s}|\,\mathrm{d}s\right)
&=& \int_0^\infty \P_0 \left( \exp\left(\int_{0}^{t}|B_{s}|\,\mathrm{d}s\right) >x \right)\mathrm{d} x  \\
&=& \int_0^\infty \P_0 \left(  \int_{0}^{t}|B_{s}|\,\mathrm{d}s  > \log x \right)\mathrm{d} x  \\
&\leq & 1+ \int_1^\infty \P_0 \left( t R_t  > \log x \right)\mathrm{d} x  \\
&\leq & 1+ 4 \int_1^\infty \P_0 \left( t B_t  > \log x \right)\mathrm{d} x  \\
&=& 1+ 4 \int_1^\infty  \P_0 \left( e^{tB_t}>x \right) \mathrm{d} x \\
&\leq & 4 \int_0^\infty  \P_0 \left( e^{tB_t}>x \right) \mathrm{d} x  = 4 \E_0 e^{tB_t} = 4 e^{t^3/2}.  
\end{eqnarray*}
Here in the last inequality we used the fact that $ \P_0 (B_t\geq 0) =1/2$ and so
$\P_x \left( e^{tB_t}>x \right) \geq 1/2$ for every $0<x<1$. 
For the lower bound, note that by It\^{o}'s formula,
$$ 
\int_0^t B_s  \mathrm{d} s = tB_t -\int_0^t s   \mathrm{d} B_s = \int_0^t (t-s)  \mathrm{d}B_s,
$$
which is of centered Gaussian distribution with  variance $ t^3/3$.
Hence
$$
\mathbb{E}_{0}\exp\left(\int_{0}^{t}|B_{s}|\,\mathrm{d}s\right)
\geq \mathbb{E}_{0}\exp\left(\int_{0}^{t} B_{s} \,\mathrm{d}s\right)
= e^{t^3/6},
$$
proving the claim. 
\end{proof}
\end{example}

\begin{example}
\label{BM.with. quadratic potential}Let $L=\frac{1}{2}\Delta$, $\beta(x)=|x|^{2}$,
and $\alpha\ge \beta$.   We 
 can define the superprocess even in this case,  using an argument involving a {\it discrete} branching particle system as follows.

As noted in the proof of Theorem \ref{main.thm}, one only needs that $S=\{S_t\}_{t\ge 0}$ satisfies the semigroup property (\ref{semigroup}) on the space $C_{b}^{+}(D)$ along with condition (\ref{down}).
 We do not need to use $H$-transform in this case. 

In order to check these, along with the well-posedness  of the nonlinear initial value problem, note  that the $d$-dimensional branching Brownian motion $(Z,\mathbf{P})$ with branching rate $\beta(x)=|x|^{2}$ `does not
blow up', that is $|Z_{t}|<\infty$  for all $t>0$, a.s., although $|Z_{t}|$ has infinite expectation.
This  follows from (ii) of Claim \ref{finite.for.Schrodinger}. Indeed, write $(\mathbf{E}_x;x\in \mathbb R^d)$ for the expectation corresponding to $Z$. Then $\mathbf{E}_x |Z_t|<\infty$ for all $x\in\mathbb R^d$, if $t$ is sufficiently small. But then, by the branching Markov property,  $|Z_t|<\infty$ for {\it all} times, $P_x$-a.s.\footnote{This is the point in the argument where we benefit from turning to the discrete system.} (Cf. \cite{HH2009}.)

Next, it is standard to show  that $(Z,\mathbf{P})$ satisfies the following log-Laplace equation:
\begin{equation}\label{lL.for.discrete}
\mathbf{E}_x e^{\langle -g,Z_t\rangle}=1-u(x,t),
\end{equation}
where $u$ is the minimal nonnegative solution to the initial value problem
\begin{equation}\label{discrete.evol.eq}
\left\{ \begin{array}{rll}
 \dot{u} &=& Lu+\beta u-\beta u^2 , \\
\lim_{t\downarrow 0}u(\cdot,t)&=&1-e^{-g}(\cdot).
\end{array}\right.
\end{equation}
More precisely, one approximates $\mathbb R^d$ by an increasing sequence of compact domains $D_n$, and for each $n$, considers the initial value problem  (\ref{discrete.evol.eq}), but on $D_n$ instead of $\mathbb R^d$, and with zero boundary condition.  Using  Proposition \ref{Parabolic semilinear maximum principle}, it follows that the solutions are increasing as $n$ grows, and that their limit stays finite as $n\to\infty$, by comparison with the constant one function. It also follows by Proposition \ref{Parabolic semilinear maximum principle} that the limiting function is the {\it minimal} nonnegative solution. (To see that the limit is actually a solution, see Appendix B in \cite{AOP99}.) For each $n$, the initial-boundary value problem yields the solution that one plugs into (\ref{lL.for.discrete}), where $\mathbb R^d$ is replaced by $D_n$ and $Z$ is replaced by the branching-Brownian motion with the same rate on $D_n$ but with {\it killing} of the particles at $\partial D_n$.

Now, consider again (\ref{discrete.evol.eq}). Above we concluded that, when the initial function is  bounded from above by one, the solution does not blow up. In fact, the same argument, using Proposition \ref{Parabolic semilinear maximum principle} shows that this is true for any bounded nonnegative initial function. Indeed,  for $K>1$, the function  $h\equiv K$ is a super-solution if the initial function $g$ satisfies $g\le K$. This argument  is obviously still valid if the operator $Lu+\beta u-\beta u^2$ is replaced by $Lu+\beta u-\alpha u^2$, provided $\alpha\ge \beta$. Therefore, in this case the initial value problem  is well-posed and can be considered the cumulant equation for the {\it superprocess}.

To define the superprocess via the log-Laplace equation using the minimal nonnegative solution to this cumulant equation, we  have to check two conditions. It is easy to see that (\ref{semigroup}) is a consequence of the minimality of the solution, while
 for condition (\ref{down}) we can use the discrete branching process as follows. By  (\ref{lL.for.discrete}), condition (\ref{down}) follows by monotone convergence when $\beta=\alpha$; when $\alpha\ge\beta$,  we are done by using  Proposition \ref{Parabolic semilinear maximum principle}.
$\hfill\diamond$
\end{example}
\begin{rem} The  argument in Example \ref{BM.with. quadratic potential} shows that, in general, whenever the branching diffusion with  $L$-motion on $D$ and branching $\beta$ is well defined and finite at all times, the $(L,\beta,\alpha;D)$-superdiffusion is also well defined and $\mathcal{M}_f(D)$-valued, provided that ($\alpha>0$ and) $\alpha\ge \beta$ . $\hfill\diamond$
\end{rem}
\subsection[CSP]{The compact support property and an example}
Recall that  $X$ possesses the \textit{compact support property} if  $P(C_{s}\Subset D)=1$ for all fixed $s\ge0$, where 
\[
C_{s}(\omega):= 
\mathsf{closure}
\left(\bigcup_{r\le s}\mathrm{supp}(X_{r}(\omega))\right).\]
 In this case, by the monotonicity in $s$, there exists an $\Omega_{1}\subset\Omega$
with $P(\Omega_{1})=1$ such that for $\omega\in\Omega_{1}$,
\begin{equation}\label{CSP.all.s}
C_{s}(\omega)\Subset D   \quad \hbox{for every } s\ge 0.
\end{equation}

It is easy to see that the criterion in \cite{AOP99} (see Theorem 3.4 and its proof in \cite{AOP99}) carries through for our more general superprocesses, that is,

\begin{proposition}[Analytic criterion for CSP]\label{Cauchy.CSP}
The compact support property holds if and only if the only  
 non-negative function $u$ satisfying 
\begin{equation}\label{zero.IC}
\left\{ \begin{array}{rcc}
 \dot{u} &=& Lu+\beta u-\alpha u^2 , \\ 
\lim_{t\downarrow 0}u(\cdot,t)&=&0,
\end{array}\right.
\end{equation}
is $u\equiv 0$; equivalently, if and only if $u_{\mathrm{max}}$, the maximal solution to \eqref{zero.IC} is identically zero.
\end{proposition}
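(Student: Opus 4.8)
The plan is to carry over the proof of \cite[Theorem 3.4]{AOP99} essentially word for word, the point being that every step there is \emph{local} in $D$: the only genuinely global operation is a monotone limit along a compact exhaustion, so the unboundedness of $\beta$ is harmless, because $\beta$ is bounded on each relatively compact subdomain and there the superprocess killed upon exiting that subdomain is a classical object in the sense of \cite{Dynkin2002book,AOP99}. Concretely, fix a smooth exhaustion $D_1\Subset D_2\Subset\cdots$ of $D$ and, for $x\in D_n$, set
\[
v_n(x,t):=-\log P_{\delta_x}\bigl(\mathrm{supp}(X_s)\subset D_n\ \text{for all }s\le t\bigr).
\]

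The first step is to identify $v_n(\cdot,t)$, via Dynkin's special Markov property and the exit measures of $X$ from the space--time cylinder $D_n\times[0,t)$ (approximating the indicator of the event ``the exit measure puts no mass on the lateral boundary $\partial D_n\times[0,t)$'' by $e^{-\theta(\cdot)}$ and letting $\theta\uparrow\infty$), as the maximal nonnegative solution on $D_n\times(0,\infty)$ of $\dot u=\LL u+\beta u-\alpha u^2$ with $u(\cdot,0)=0$ and $u\equiv+\infty$ on $\partial D_n\times(0,\infty)$; the quadratic term $-\alpha u^2$ is precisely what keeps $v_n$ finite in the interior. Replacing $\delta_x$ by a general $\mu\in\mathcal M_c(D)$ and using the branching property of exit measures gives $P_\mu(\mathrm{supp}(X_s)\subset D_n\ \forall s\le t)=\exp(-\langle v_n(\cdot,t),\mu\rangle)$. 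Next I would let $n\to\infty$: since $\{\mathrm{supp}(X_s)\subset D_n\ \forall s\le t\}\uparrow\{C_t\Subset D\}$ (using $\mathrm{supp}(X_s)\subset C_s\subset C_t$ and that a closed subset of $D$ contained in some $D_n$ is compact), the $v_n(\cdot,t)$ decrease pointwise to $v_\infty(x,t):=-\log P_{\delta_x}(C_t\Subset D)$ and $P_\mu(C_t\Subset D)=\exp(-\langle v_\infty(\cdot,t),\mu\rangle)$ for every $\mu\in\mathcal M_c(D)$; hence, by \eqref{CSP.all.s}, the compact support property holds if and only if $v_\infty\equiv0$ on $D\times(0,\infty)$.

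It remains to identify $v_\infty$ with $u_{\mathrm{max}}$. On one hand, $\{v_n\}$ is monotone and locally uniformly bounded on $D\times(0,\infty)$ (for $n\ge n_0$ one has $v_n\le v_{n_0}$ on $D_{n_0}$), so interior parabolic Schauder estimates show $v_\infty$ is a classical solution of the PDE there, while $0\le v_\infty(x,t)\le v_n(x,t)\to0$ as $t\downarrow0$ for $x\in D_n$ gives $\lim_{t\downarrow0}v_\infty(\cdot,t)=0$; thus $v_\infty$ is a nonnegative solution of \eqref{zero.IC}, so $v_\infty\le u_{\mathrm{max}}$. On the other hand, any nonnegative solution $u$ of \eqref{zero.IC} satisfies $u\le v_\infty$: fixing $n$, the function $u$ is a nonnegative classical solution on $D_n\times(0,\infty)$, continuous up to $\overline{D_n}\subset D$, with $u(\cdot,0)=0$, so for each $T>0$ there is a finite $\theta_0$ with $u\le\theta_0$ on $\partial D_n\times[0,T]$, and comparing $u$ with the solution $v_{n,\theta_0}$ on $D_n$ having boundary value $\theta_0$ and zero initial value, via Proposition \ref{Parabolic semilinear maximum principle}, gives $u\le v_{n,\theta_0}\le v_n$ on $D_n\times[0,T)$; letting $T\to\infty$ and then $n\to\infty$ yields $u\le v_\infty$, hence $u_{\mathrm{max}}\le v_\infty$ and therefore $v_\infty=u_{\mathrm{max}}$. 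Combining the pieces: the compact support property $\iff v_\infty\equiv0\iff u_{\mathrm{max}}\equiv0\iff$ the only nonnegative solution of \eqref{zero.IC} is $u\equiv0$.

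The one genuinely non-routine ingredient is the first step: the identification of $v_n$ with the maximal solution carrying infinite lateral data, together with the exit-measure and special-Markov-property machinery behind it. Everything else is either soft (monotone limits, compactness/Schauder estimates, measurability of $\{C_t\subset D_n\}$) or a direct application of Proposition \ref{Parabolic semilinear maximum principle}; and since that machinery is invoked only on the fixed relatively compact domains $D_n$, on which $\beta$ is bounded, the argument of \cite[Theorem 3.4]{AOP99} applies here with no essential change.
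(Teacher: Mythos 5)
Your proposal is correct and follows essentially the same route as the paper, which simply observes that the criterion of \cite[Theorem 3.4]{AOP99} carries over because every step of that argument (exit measures on $D_n\Subset D$, maximal solutions with infinite lateral data, comparison via Proposition \ref{Parabolic semilinear maximum principle}, and monotone limits along the exhaustion) is local, so the unboundedness of $\beta$ plays no role. Your write-up is a faithful fleshing-out of exactly that argument.
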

We now apply this analytic criterion to a class of superdiffusions.

\begin{claim} \label{csp.holds} Assume that $L$ is conservative on $D$, that $T^{L+\beta}_t(1)(\cdot)<\infty$ and that $\alpha \ge \beta$.   
Then the compact support property holds for $X$.
\end{claim}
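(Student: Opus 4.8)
The strategy is to verify the analytic criterion in Proposition~\ref{Cauchy.CSP}: we must show that the maximal nonnegative solution $u_{\mathrm{max}}$ of the Cauchy problem \eqref{zero.IC} with zero initial data is identically zero. The idea is to dominate $u_{\mathrm{max}}$ by the solution of a \emph{linear} equation with zero initial data, which then vanishes by a Feynman--Kac/uniqueness argument. Concretely, since $\alpha\ge\beta$ and $u\ge 0$, the nonlinear drift satisfies $\beta u-\alpha u^2\le \beta u-\beta u^2=\beta u(1-u)\le \beta u$ on the region where $u\le 1$, and in general $\beta u - \alpha u^2 \le \beta u$ whenever $\beta\ge 0$; more usefully, $\beta u-\alpha u^2 \le (L+\beta)u$ is the subsolution relation for $L+\beta$. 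So $u_{\mathrm{max}}$ is a nonnegative subsolution of $\dot w = (L+\beta)w$ with $w(\cdot,0)=0$.

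\textbf{Key steps.} First, I would recall from the construction of $u_{\mathrm{max}}$ (Appendix A/B of \cite{AOP99}, as used in the discussion after Proposition~\ref{Parabolic semilinear maximum principle}) that $u_{\mathrm{max}}=\lim_{n\to\infty} u_n$, where $u_n$ solves \eqref{zero.IC} on $D_n\times(0,\infty)$ with zero lateral and initial data, $\{D_n\}$ an exhausting sequence of smooth relatively compact subdomains. Second, on each $D_n$ I would compare $u_n$ with the solution $v_n$ of the \emph{linear} problem $\dot v_n=(L+\beta)v_n$ on $D_n$ with zero initial and lateral boundary data: by the linear parabolic maximum principle (or directly by Proposition~\ref{Parabolic semilinear maximum principle} with $v_1=v_n$, $v_2=u_n$, using $-\alpha u_n^2\le 0$ so that $(L+\beta)v_n-\dot v_n = 0 \ge -\alpha u_n^2 = (L+\beta)u_n-\dot u_n-\ldots$), one gets $u_n\le v_n$. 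But $v_n\equiv 0$ by uniqueness for the linear Cauchy--Dirichlet problem with zero data (the solution has the Feynman--Kac representation $v_n(x,t)=\E_x[e^{\int_0^t\beta(Y_s)\,ds}\cdot 0;\,t<\tau_{D_n}]=0$). Hence $u_n\equiv 0$ for every $n$, so $u_{\mathrm{max}}\equiv 0$, and the compact support property follows. Third, I would remark where conservativeness of $L$ on $D$ and the hypothesis $T^{L+\beta}_t(1)<\infty$ enter: they are not needed to run the above domination on bounded domains, but they are needed to guarantee that $X$ is genuinely $\mathcal M_f(D)$-valued (so that the compact support property is a meaningful statement about a well-defined $\mathcal M_f(D)$-valued process rather than only an $\mathcal M_{loc}(D)$-valued one) --- this is exactly the content of the remark following Example~\ref{BM.with. quadratic potential} together with Claim~\ref{finite.for.Schrodinger}-type finiteness. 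In particular, $T^{L+\beta}_t(1)<\infty$ ensures $E_{\mu}|X_t|<\infty$, and conservativeness of $L$ is what lets us take $h\equiv 1$ as the reference function in Assumption~\ref{main.assumption}.

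\textbf{Main obstacle.} The one genuinely delicate point is justifying that the limit $u_{\mathrm{max}}=\lim_n u_n$ really is the maximal solution and that the comparison on each $D_n$ is legitimate, i.e. that $u_n$ and $v_n$ have enough regularity ($C^{2,1}$ in the interior, continuous up to the closure of the parabolic boundary) for Proposition~\ref{Parabolic semilinear maximum principle} to apply; this is a matter of invoking the standard interior Schauder estimates and the local boundedness of $\beta$ on $\overline{D_n}$, exactly as in \cite{AOP99}. A secondary subtlety: one should make sure the comparison is used with the correct monotonicity, namely that $-\alpha u_n^2\le 0$ makes $u_n$ a \emph{sub}solution of the linear operator $\partial_t-(L+\beta)$, so that Proposition~\ref{Parabolic semilinear maximum principle} gives $u_n\le v_n$ and not the reverse. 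Once these routine regularity and sign bookkeeping points are settled, the conclusion $u_{\mathrm{max}}\equiv 0$, hence CSP, is immediate from Proposition~\ref{Cauchy.CSP}.
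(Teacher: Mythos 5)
There is a genuine gap, and it is fatal to the proposed route. Your argument hinges on the claim that $u_{\mathrm{max}}=\lim_n u_n$ where $u_n$ solves \eqref{zero.IC} on $D_n$ with \emph{zero} initial and \emph{zero} lateral boundary data. That limit is the \emph{minimal} nonnegative solution, not the maximal one: with zero data on the whole parabolic boundary of $D_n$, uniqueness for the semilinear problem on the bounded domain forces $u_n\equiv 0$, so your comparison $u_n\le v_n\equiv 0$ only re-proves the trivial fact that the minimal solution vanishes, which carries no information about the compact support property. The maximal solution is obtained from the approximating domains with \emph{large (blow-up) lateral boundary data}, and it is precisely the absorption term $-\alpha u^2$ (not a domination by the linear equation, whose solution with such boundary data is infinite) that keeps it finite in the interior. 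A quick sanity check exposes the problem: your argument uses only $-\alpha u_n^2\le 0$, so it never invokes conservativeness of $L$, the finiteness of $T^{L+\beta}_t(1)$, or $\alpha\ge\beta$; it would therefore establish the CSP for \emph{every} $(L,\beta,\alpha;D)$-superdiffusion, which is false --- the CSP genuinely fails, e.g., for non-conservative underlying motions or for suitably decaying $\alpha$ (cf.\ Theorem 3.5 and the examples in \cite{AOP99}). Your own remark that the hypotheses ``are not needed'' in the PDE part, only to make $X$ well defined, is exactly the warning sign: they are needed to make $u_{\mathrm{max}}$ vanish.

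For comparison, the paper argues as follows. By Propositions \ref{Parabolic semilinear maximum principle} and \ref{Cauchy.CSP} one reduces to $\alpha=\beta$, and then uses the discrete $(L,\beta;D)$-branching diffusion $Z$: the standard identity coming from \eqref{lL.for.discrete}--\eqref{discrete.evol.eq} gives $u_{\mathrm{max}}(x,t)=1-\mathbf{P}_x(Z_t\Subset D)$, so it suffices to show $\mathbf{P}_x(Z_t\Subset D)=1$ for all $x,t$. This is where both hypotheses do real work: conservativeness of $L$ ensures no particle leaves $D$, and $\mathbf{E}_x|Z_t|=T^{L+\beta}_t(1)(x)<\infty$ ensures $|Z_t|<\infty$ a.s., so $Z_t$ is a finite collection of points in $D$, hence compactly contained in $D$. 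If you want to salvage an analytic proof, you would have to work with the blow-up boundary data construction of $u_{\mathrm{max}}$ (or with the probabilistic representation just described); the linear-domination scheme as you set it up cannot see the maximal solution at all.
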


\begin{rem}
Our assumption on $T^{L+\beta}$ guarantees that the superprocess is well defined. For example, by Claim \ref{finite.for.Schrodinger} this assumption is satisfied  when $L=\frac{1}{2}\Delta$ on $D=\mathbb R^d$ and $\beta (x)=|x|^p$, $0<p<2$; the same is true of course for $\beta (x)=C+|x|^p,C>0$.
\end{rem}

 \noindent {\it  Proof of Claim \ref{csp.holds}.}  
By Propositions \ref{Parabolic semilinear maximum principle}  and \ref{Cauchy.CSP}, it is enough to consider the case when $\alpha=\beta$, and show that $u_{\mathrm{max}}$ for \eqref{zero.IC} is identically zero.

Just like in Example \ref{BM.with. quadratic potential}, we are going to utilize a {\it discrete particle system}. Namely,
consider the  $(L,\beta;D)$-branching diffusion  $Z$, and let $\{\mathbf{P}_x,\mathbf{E}_x\,x\in D\}$  denote the corresponding probabilities and expectations. A standard fact,  following easily from (\ref{lL.for.discrete}) and (\ref{discrete.evol.eq}), is that  $u_{\mathrm{max}}(x,t)=1-\mathbf{P}_x(Z_t\Subset D)$.  We need to show that 
$$\mathbf{P}_x(Z_t\Subset  D)=1 \quad \hbox{for every }  x\in D \hbox{ and } t\ge 0.
$$
 But this follows from the conservativeness assumption and from $\mathbf{P}_x(|Z_t|<\infty)=1$, where the latter  follows from the expectation formula, as we even have $\mathbf{E}_x(|Z_t|)=T^{L+\beta}_t(1)(x)<\infty$ by  assumption.
 \qed 

For super-Brownian motion with quadratic mass creation we still have the compact support property.

\begin{claim}[CSP for quadratic mass creation] Let $L=\frac{1}{2}\Delta$ on $D=\mathbb R^d$ and $\alpha(x) \ge \beta(x):=|x|^2$.   
Then the compact support property holds for $X$.\label{csp.still.holds}
\end{claim}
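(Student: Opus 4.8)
The plan is to mimic the argument of Claim \ref{csp.holds}, again passing to a \emph{discrete} branching particle system, the only new feature being that $\beta(x)=|x|^2$ now sits exactly at the critical threshold identified in Claim \ref{finite.for.Schrodinger}(ii), so the branching Brownian motion $Z$ has $\mathbf E_x|Z_t|=\infty$ for large $t$. First, by Propositions \ref{Parabolic semilinear maximum principle} and \ref{Cauchy.CSP} it suffices to treat $\alpha=\beta$, since a larger $\alpha$ only decreases the maximal solution $u_{\mathrm{max}}$ of \eqref{zero.IC}; so we must show $u_{\mathrm{max}}\equiv 0$ for $L=\tfrac12\Delta$, $\beta=\alpha=|x|^2$ on $\R^d$. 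As in Example \ref{BM.with. quadratic potential} and Claim \ref{csp.holds}, the identity coming from \eqref{lL.for.discrete}--\eqref{discrete.evol.eq} gives $u_{\mathrm{max}}(x,t)=1-\mathbf P_x(Z_t\Subset\R^d)$, where $(Z,\mathbf P)$ is the $(\tfrac12\Delta,|x|^2;\R^d)$-branching Brownian motion. Hence the claim reduces to the purely probabilistic statement that for every $x$ and every $t\ge 0$, $Z_t$ has compact support in $\R^d$ almost surely — equivalently $R^Z_t:=\sup\{\,|y|:y\in\mathrm{supp}(Z_t)\,\}<\infty$ a.s.

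Next I would establish this no-explosion-in-space property of $Z$. The key point, already used in Example \ref{BM.with. quadratic potential}, is that $|Z_t|<\infty$ for all $t$, a.s.: by Claim \ref{finite.for.Schrodinger}(ii) there is a $t_0>0$ with $\mathbf E_x|Z_{t}|<\infty$ for $t<t_0$ and all $x$, and then the branching Markov property bootstraps finiteness of the total mass to all times. Now condition on the whole genealogical/particle structure up to time $t$; on the event $\{|Z_t|<\infty\}$ there are only finitely many particles alive at time $t$ and only finitely many branch points in $[0,t]$, hence the support of $Z_t$ is contained in a finite union of (time-shifted) Brownian trajectories run for time $\le t$, each of which is a.s.\ bounded. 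Therefore $R^Z_t<\infty$ a.s., i.e.\ $\mathbf P_x(Z_t\Subset\R^d)=1$ for every $x\in\R^d$ and $t\ge 0$. Combining with $u_{\mathrm{max}}(x,t)=1-\mathbf P_x(Z_t\Subset\R^d)$ gives $u_{\mathrm{max}}\equiv 0$, and Proposition \ref{Cauchy.CSP} then yields the compact support property for $X$. \qed

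The main obstacle — and the place where a little care is needed — is the passage from ``total mass is finite at all times'' to ``spatial support is bounded at each fixed time,'' because the number of particles, though finite, is unbounded in expectation, so one cannot simply take an expectation and must instead argue pathwise after conditioning on the branching structure; one should also be mildly careful that the discrete approximation $D_n\uparrow\R^d$ used to \emph{define} $u_{\mathrm{max}}$ via \eqref{lL.for.discrete} really does converge to the $\R^d$-object, but this is exactly the monotone/comparison argument via Proposition \ref{Parabolic semilinear maximum principle} already carried out in Example \ref{BM.with. quadratic potential} and Claim \ref{csp.holds}, so it can be invoked verbatim. A secondary technical point is that $Z$ here has infinite-mean total mass for $t\ge t_0$, so the identity $u_{\mathrm{max}}(x,t)=1-\mathbf P_x(Z_t\Subset D)$ must be read as a statement about the log-Laplace functional of $Z$ (valid for all $t$) rather than anything involving first moments; this is precisely the viewpoint emphasized in Example \ref{BM.with. quadratic potential}, and adopting it removes the difficulty.
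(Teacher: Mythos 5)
Your proof is correct and follows essentially the same route as the paper: reduce to $\alpha=\beta$ via Propositions \ref{Parabolic semilinear maximum principle} and \ref{Cauchy.CSP}, identify $u_{\mathrm{max}}(x,t)=1-\mathbf{P}_x(Z_t\Subset\R^d)$ through the discrete branching Brownian motion, and use the a.s.\ finiteness of $|Z_t|$ (bootstrapped from Claim \ref{finite.for.Schrodinger}(ii) via the branching Markov property, as in Example \ref{BM.with. quadratic potential}) together with conservativeness of the Brownian particles to conclude $Z_t$ has compact support. The only difference is cosmetic: you spell out the pathwise ``finitely many particles, each on a bounded Brownian trajectory'' step that the paper leaves implicit by citing the argument of Claim \ref{csp.holds} and \cite{HH2009}.
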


\begin{proof}
We now show how to modify the proof of Claim \ref{csp.holds} in this case. Even though, by Claim \ref{finite.for.Schrodinger}, the assumption of Claim \ref{csp.holds} on the semigroup no longer holds, we know  that the superprocess is well defined, as shown in Example \ref{BM.with. quadratic potential}.
Furthermore, for the corresponding branching-Brownian motion, $\mathbf{P}_x(|Z_t|<\infty)=1$ is still true  -- see \cite{HH2009}.

The rest of the proof is exactly the same as in the case of Claim \ref{csp.holds}.
\end{proof}

\subsection{Semiorbits}

In this part we discuss a method which is  applicable in the absence of positive
harmonic functions too. In this part, the assumption on the power of the
nonlinearity $(p=2)$ is important as we are using the path continuity
(in the weak topology of measures).

\textbf{\noun{(i) Assume $\lambda_{c}<\infty.$}}\textbf{ }

The almost sure upper estimate on the local growth is then based on
the existence of positive harmonic functions. Indeed, let $h$ be a positive harmonic function, that is,  let $(L+\beta-\lambda_{c})h=0,\ h>0$.
(Such a function $h$ always exists; see Chapter 4 in \cite{Pinskybook}.)
Define $H(x,t):=e^{-\lambda_{c}t}h(x)$; then for $t,s>0,$ \begin{equation}
\left(T_{t}^{L+\beta}H(\cdot,t+s)\right)(x)\le H(x,s),\label{eq:supermart.prop}\end{equation}
that is, $T_{t}^{L+\beta-\lambda_{c}}h\le h$, or equivalently, 
$T_t^{(L+\beta-\lambda_{c})^{h}}1\le1$.
Here $$\{ T_t^{(L+\beta-\lambda_{c})^{h}}; t\geq 0\}$$ is the  semigroup obtained from $\{T_t^{ L+\beta-\lambda_{c}}; t\geq 0\}$ through an $h$-transform.  
 
Using the Markov
and the branching properties together with $h$-transform theory, it then immediately follows that if
$N_{t}:=\langle H(\cdot,t),X_{t}\rangle$, then $N$ is a continuous $P_{\mu}$-supermartingale
for $\mu\in\mathcal{M}_{f}(D)$ (where $P_{\mu}$ is the law of $X$
with $X_{0}=\mu$). Indeed, the fact that  $N$ is finite and has continuous paths follows since $$N_{t}:=e^{-\lambda_{c}t}\langle h,X_{t}\rangle=e^{-\lambda_{c}t}\langle 1,X^h_{t}\rangle,$$ where $X^h$ is the $(L_0^h,\lambda_c,\alpha h;D)$-superdiffusion (see Lemma \ref{LH1} and the comment following it) with continuous total mass process.
Moreover,
\begin{align*}
&E_{\mu}\left(N_t\mid \mathcal{F}_s\right)=E_{\mu}\left(N_t\mid X_s\right)=E_{X_{s}}N_t=E_{X_{s}}\langle H(\cdot,t),X_{t}\rangle=\\
& \int_{D}E_{\delta_{x}}\langle H(\cdot,t),X_{t-s}\rangle\: X_{s}(\mathrm{d}s)=
\int_{D}\left(T_{t-s}^{L+\beta}H(\cdot,t)\right)(x)\, X_{s}(\mathrm{d}x)=\\
&\le\int_{D}H(x,s)\, X_{s}(\mathrm{d}x)=\langle H(\cdot,s),X_{s}\rangle=N_s.\end{align*}

The above analysis also shows that if $(L+\beta-\lambda_{c})^{h}$
is conservative, that is, if  $T^{(L+\beta-\lambda_{c})^{h}}1=1$, then $N$ is a continuous $P_{\mu}-$martingale, as the inequality in the previous displayed formula becomes an equality.

The continuous non-negative supermartingale $N_{t}$ has an almost sure  limit $N_\infty$ as $t\to\infty$.
Note also that   $N_t=\langle H(\cdot,t),X_{t}\rangle=e^{-\lambda_{c}t}\langle1,hX_{t}\rangle$ and $h>0$ is $C^2$ on $D$.
It follows that 
the local growth is $\mathcal{O}\left(e^{\lambda_{c}t}\right)$;
 that is, for every $B\Subset D$, 
\[
X_{t}(B)=\mathcal{O}\left(e^{\lambda_{c}t}\right)\ a.s.
\]

 \medskip
\textbf{\noun{(ii) Assume $\lambda_{c}=\infty.$ }}

In this case,   there is no $C^2$-function  $h>0$ such that $(L+\beta-\lambda)h\le0$ 
for some $\lambda\in\mathbb{R}$; see again   \cite[Chapter 4]{Pinskybook}.
Can we still get an a.s. upper estimate for the local growth?

Assume that for some  smooth positive space-time function 
$F$, inequality
(\ref{eq:supermart.prop}) holds with $F$ in place of $H$ there; 
 that is, denoting \[
F(\cdot,t)=:f^{(-t)}(\cdot),\]
we make the following assumption.

\medskip

\noindent\textbf{\noun{Assumption A}}
 \textbf{:} There exists a family $\{f^{(-t)}; \, t\ge0\}$
of smooth nonnegative functions, satisfying \[
T_{t}^{L+\beta}f^{(-t-s)}\le f^{(-s)}.\]
By smoothness we mean that  $f^{(-t)}$ is a continuous spatial function for  $t\ge 0$ and $t\mapsto f^{(-t)}(x)$ is continuous, uniformly on bounded spatial domains, at any $t_0\ge 0$.
\begin{rem}
Note that, when $\lambda_{c}<\infty$, Assumption A holds with $f^{(-t)}(\cdot):=e^{-\lambda_{c}t}h(\cdot)$, where $h$ is as before.$\hfill\diamond$
\end{rem}

As we have seen, Assumption A implies the important property that
$N_t:=\langle f^{( - t)}, X_t\rangle \ge 0$ is a $\P_{\mu}$-supermartingale. 
In order to conclude that it has an almost sure limit, we make a short detour and investigate the continuity of this supermartingale.

\begin{lem}\label{simple.lemma}
Let $\{\mu_{t},\ t\ge0\}$ be a family in $\mathcal{M}_{f}(D)$ satisfying
that $t\mapsto|\mu_{t}|$ is locally bounded, and assume that $t_{0}>0$
and $\mu_{t}\stackrel{v}{\Rightarrow}\mu_{t_{0}}$ as $t\to t_{0}$.
Assume furthermore that \[
C=C_{t_{0},\varepsilon}:=\mathsf{closure}\left(\bigcup_{t=t_{0}-\varepsilon}^{t_{0}+\varepsilon}\mathrm{supp}(\mu_{t})\right)\Subset D
\]
with some $\varepsilon>0$. Let $H:D\times\mathbb{R}_{+}\to\mathbb{R}$
be a function continuous in $x\in D$ and continuous in
time at $t_0$, uniformly on bounded spatial domains. Then $\lim_{t\to t_{0}}\langle H(\cdot,t),\mu_{t}\rangle=\langle H(\cdot,t_{0}),\mu_{t_{0}}\rangle$.\end{lem}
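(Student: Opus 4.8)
The statement is a continuity/convergence lemma for pairings $\langle H(\cdot,t),\mu_t\rangle$, so the natural route is to split the difference into two pieces and bound each. Write
\[
\langle H(\cdot,t),\mu_t\rangle - \langle H(\cdot,t_0),\mu_{t_0}\rangle
= \langle H(\cdot,t) - H(\cdot,t_0),\mu_t\rangle
+ \langle H(\cdot,t_0),\mu_t\rangle - \langle H(\cdot,t_0),\mu_{t_0}\rangle .
\]
The first term I would control using the uniform-in-space time continuity of $H$ together with the local boundedness of $|\mu_t|$: for $t$ close to $t_0$, all the $\mu_t$ are supported in the fixed compact $C\Subset D$, and $\sup_{x\in C}|H(x,t)-H(x,t_0)|\to 0$, so this term is at most $\big(\sup_{x\in C}|H(x,t)-H(x,t_0)|\big)\cdot\sup_{|s-t_0|\le\varepsilon}|\mu_s|\to 0$.

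For the second term I would use the vague convergence $\mu_t\stackrel{v}{\Rightarrow}\mu_{t_0}$. The only subtlety is that $H(\cdot,t_0)$ need not have compact support, whereas vague convergence tests against $C_c(D)$ functions. But since all measures involved (for $|t-t_0|\le\varepsilon$) live on the fixed compact set $C\Subset D$, I would pick a cutoff function $\chi\in C_c^+(D)$ with $0\le\chi\le 1$ and $\chi\equiv 1$ on a neighborhood of $C$; then $H(\cdot,t_0)\chi\in C_c(D)$ and $\langle H(\cdot,t_0),\mu_t\rangle = \langle H(\cdot,t_0)\chi,\mu_t\rangle$ for every such $t$ (and likewise for $\mu_{t_0}$), so vague convergence directly gives $\langle H(\cdot,t_0)\chi,\mu_t\rangle\to\langle H(\cdot,t_0)\chi,\mu_{t_0}\rangle$, i.e. the second term tends to $0$.

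Combining the two estimates yields $\lim_{t\to t_0}\langle H(\cdot,t),\mu_t\rangle = \langle H(\cdot,t_0),\mu_{t_0}\rangle$. I do not anticipate a serious obstacle here; the only point requiring a little care is the one just flagged, namely reconciling the non-compactly-supported test function $H(\cdot,t_0)$ with the definition of vague convergence, which is handled cleanly by the localization of all the relevant measures inside the common compact set $C$ and the insertion of a cutoff $\chi$. Everything else is a routine triangle-inequality argument using the two stated hypotheses (local boundedness of total masses and uniform-in-$x$ time continuity of $H$).
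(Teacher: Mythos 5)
Your proof is correct and follows essentially the same route as the paper: the same two-term triangle-inequality decomposition, with the first term bounded by $\sup_{x\in C}|H(x,t)-H(x,t_0)|$ times the locally bounded total masses, and the second handled by replacing $H(\cdot,t_0)$ with a compactly supported modification (your cutoff $\chi$ plays the role of the paper's Urysohn function $g$ equal to $H(\cdot,t_0)$ on $C$) so that vague convergence applies. No gaps.
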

\begin{proof}
Using Urysohn's Lemma, there exists a continuous function $g:D\to\mathbb{R}$
such that $g(\cdot)=H(\cdot,t_{0})$ on $C$ and $g=0$ on $D\setminus D_1,$
where $C\Subset D_{1}\Subset D$. Then, \[
\lim_{t\to t_{0}}\langle H(\cdot,t_{0}),\mu_{t}\rangle=\lim_{t\to t_{0}}\langle g,\mu_{t}\rangle=\langle g,\mu_{t_{0}}\rangle=\langle H(\cdot,t_{0}),\mu_{t_{0}}\rangle,\]
 since $g\in C_c(D)$. Also, by the assumptions
on $\mu$ and $H$, for $t\in(t_{0}-\varepsilon,t_{0}+\varepsilon),$ one has
$$|\langle H(\cdot,t_{0})-H(\cdot,t),\mu_{t}\rangle|\le\sup_{x\in C}|H(x,t)-H(x,t_{0})|\,\sup_{t\in(t_{0}-\varepsilon,t_{0}+\varepsilon)}|\mu_{t}|,$$
which tends to zero as $t\to t_{0}.$
\end{proof}
\medskip{}

Recall that $\beta$ is locally bounded and the branching is quadratic. We now need a path regularity result for superprocesses. 

\begin{claim}[Continuity of $X$] Let $\mu\in\mathcal{M}_c(D)$.
 If the compact support property holds, then $(X,P_{\mu})$ has an $\mathcal{M}_f(D)$-valued,   continuous version.
(Here  continuity is meant in the weak topology of measures.)
\end{claim}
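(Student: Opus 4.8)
The plan is to reduce the claimed path-continuity of $X$ to the known continuity of the total mass of an auxiliary \emph{finite} measure-valued superprocess obtained by a localization argument, exploiting the compact support property. First I would fix $\mu\in\mathcal{M}_c(D)$ and work on the event $\Omega_1$ of full measure on which \eqref{CSP.all.s} holds, so that for each $\omega$ the closure $C_s(\omega)$ of the union of supports up to time $s$ is compactly contained in $D$. Since we only need continuity on each finite interval $[0,s]$, we may freeze a relatively compact smooth domain $U$ with $C_s(\omega)\Subset U\Subset D$; on this time interval the process never charges $D\setminus U$, so its restriction to $U$ agrees (pathwise, after the stopping time argument) with the $(L,\beta,\alpha;U)$-superprocess, whose mass creation term $\beta$ is now \emph{bounded} on $\overline U$. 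For a superprocess with bounded mass creation and quadratic branching, the existence of a continuous $\mathcal{M}_f$-valued version is classical (Fitzsimmons/Dynkin, cf.\ the references used in Section \ref{S:3}); indeed one can remove $\beta$ altogether by an $H$-transform as in Lemma \ref{LH1}, reducing to a critical superprocess, for which weak path-continuity is standard.

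The technical care lies in passing from "for each finite window there is a continuous version" to "there is a single continuous version of $X$ on $[0,\infty)$." Here I would argue as follows. Using an increasing sequence $D_k\Subset D_{k+1}\Subset D$ with $\bigcup_k D_k=D$, define stopping times $\tau_k:=\inf\{t\ge 0: \operatorname{supp}(X_t)\not\subseteq D_k\}$ (or, more robustly, the first time $C_t\not\Subset D_k$); by the compact support property \eqref{CSP.all.s}, $\tau_k\uparrow\infty$ $P_\mu$-a.s. On the stochastic interval $[0,\tau_k)$ the process coincides in law with the $(L,\beta,\alpha;D_k)$-superprocess stopped at its exit time, which has a continuous $\mathcal{M}_f(D_k)$-valued version by the bounded-$\beta$ theory. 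Patching these continuous versions across the $\tau_k$'s (they are consistent by the Markov property and uniqueness in Theorem \ref{main.thm}) produces a process with continuous paths in the weak topology of $\mathcal{M}_f(D)$; the weak and vague topologies coincide here on each interval because the total mass stays finite and the support stays in a fixed compact. Lemma \ref{simple.lemma} is exactly the tool that lets one upgrade vague convergence of the measures to convergence of the pairings $\langle H(\cdot,t),X_t\rangle$, so invoking it confirms that the patched process is genuinely weakly continuous.

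I expect the main obstacle to be making the localization rigorous at the level of \emph{processes} rather than fixed-time laws: one must check that stopping the globally-defined $X$ of Theorem \ref{main.thm} at $\tau_k$ really does yield the $(L,\beta,\alpha;D_k)$-superprocess killed on $\partial D_k$, so that one may import its continuous modification. This is where the construction via the log-Laplace equation \eqref{log.Laplace} and the exit-measure formalism (as in \cite{Dynkin2002book}, already used in Section \ref{S:4}) must be combined with a strong Markov property at $\tau_k$; since $\tau_k$ is announced by the compact support property, one can approximate it by deterministic times and use the time-homogeneous Markov property of Theorem \ref{main.thm} to conclude. A secondary, more routine point is checking that the patched modification has no jumps at the gluing times $\tau_k$, which follows because at time $\tau_k$ the measure $X_{\tau_k}$ is still compactly supported in $D_{k+1}$ and the $D_{k+1}$-version is continuous there. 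Once these are in place, the remaining estimates are the standard continuity arguments for critical superprocesses and an application of Lemma \ref{simple.lemma}.
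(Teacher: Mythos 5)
Your proposal follows essentially the same route as the paper: localize with an increasing sequence of compactly embedded domains, stop at the first time the process charges the complement (so the relevant coefficients are bounded and the localized process --- the paper phrases this via the exit measures $X^{D_n}$ --- has a weakly continuous $\mathcal{M}_f$-valued version), identify laws on $\mathcal{F}_{\tau_n}$, and patch using that $\tau_n\uparrow\infty$ a.s.\ by the compact support property. The only cosmetic difference is your invocation of Lemma \ref{simple.lemma}, which is not needed for this claim (it is used later for the supermartingale $N$); otherwise the argument matches the paper's proof.
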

{\bf Note:} In the sequel, we will work with a weakly continuous version of the superprocess whenever the compact support property holds.

\begin{proof} 
Recall the definition of $\Omega_1$ from (\ref{CSP.all.s}); by the compact support property, we can in fact work on $\Omega_1$ instead of $\Omega$.
Pick a sequence of domains $\{D_n\}_{n\ge 1}$ satisfying that $D_n\uparrow D$ and $D_n\Subset D$ for all $n\in \mathbb N$. Define
$$\tau_n:=\inf \{t\ge 0\mid X_t(D_n^c)>0\},$$
and let  $\mathcal{F}_{\tau_{n}}$ denote the $\sigma$-algebra up to $\tau_n$, that is, $$ \mathcal{F}_{\tau_{n}}:=\{A\subset\Omega_1\mid A\cap\{\tau_n\le t\}\in \mathcal{F}_t, \forall t\ge 0\}.$$
Let $X^{D_{n}}_t$ denote the exit measure from $D_n\times [0,t)$, which is a (random) measure on  $(\partial D_n\times(0,t))\cup (D_n\times\{t\})$. Since the coefficients are locally bounded, for any fixed $n\ge 1$, $t\to X^{D_{n}}_t$ has an $\mathcal{M}_f(D))$-valued, weakly continuous version $t\to \widehat X^{D_{n}}_t$.
If $P^{(n)}$ denotes their common distribution, then
\begin{equation}\label{localiz}
P|_{\mathcal{F}_{\tau_{n}}}=P^{(n)}|_{\mathcal{F}_{\tau_{n}}}.
 \end{equation}
Let $\Omega^*:=C([0,\infty),\mathcal{M}_f(D)$ be the space of weakly continuous functions from $[0,\infty)$ to $\mathcal{M}_f(D)$ and let $\mathcal{F}^*$ denote the Borels of $\Omega^*$.
By the definition of $\Omega_1$,
\begin{equation}
\label{passes.t}
\lim_{n\to\infty}\tau_n (\omega)=\infty,\ \forall\omega\in\Omega_1,
\end{equation} and thus, it is standard to show that  the measures-valued processes
$$\left\{\widehat X_t^{D_{n}}, t\in [0,\tau_n)\right\}_{n\ge 1}$$ with distributions $(P^{(n)},\Omega^*,{\mathcal{F}_{\tau_{n}}}),n\ge 1$ have an  extension to a  process
$(X_t^*,\ t\in[0,\infty))$ with distribution $(P^*,\Omega^*,\mathcal{F}^*)$. Since $P^*$ is uniquely determined on the Borels of $\mathcal{M}_f(D)^{[0,\infty)}$ by the distributions
$(P^{(n)},\Omega^*,{\mathcal{F}_{\tau_{n}}}),n\ge 1$, therefore (\ref{localiz}) implies that $P^*=P$ on the Borels of $\mathcal{M}_f(D)^{[0,\infty)}$. Hence $X^*$ is a weakly continuous version of $X$.
\end{proof}
 
Now it is easy to see that the supermartingale $N_t$ has a continuous version: 
let us define a version of $N$ using a weakly continuous version of $X$. By Assumption A, and letting $\mu_{t}=X_{t}(\omega)$,  Lemma \ref{simple.lemma} implies the continuity of $N(\omega,t)$ at $\omega\in\Omega_1,t_{0}>0$. 
Then, since $N$ is a continuous nonnegative supermartingale, we conclude that it has an almost sure limit.

In summary, we have obtained

\begin{lem}\label{f.estimates}
Under Assumption A and assuming the compact support property (or just the existence of finite measure-valued continuous trajectories), one has 
\begin{equation}\label{estimate.with.f}
X_{t}(B)=\mathcal{O}\left(\sup_{x\in B}\frac{1}{f^{(-t)}(x)}\right)  \quad \text{a.s.}
\end{equation}
\end{lem}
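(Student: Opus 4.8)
The plan is to assemble the statement from the pieces already in place. The key realization is that under Assumption A, the process $N_t := \langle f^{(-t)}, X_t\rangle$ is a nonnegative $P_\mu$-supermartingale (this was established in the discussion preceding the lemma using the Markov and branching properties together with the inequality $T^{L+\beta}_t f^{(-t-s)} \le f^{(-s)}$), and that under the compact support property (or merely the existence of finite-measure-valued continuous trajectories) it has a continuous version, hence an almost sure finite limit $N_\infty$ by the martingale convergence theorem.

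First I would fix $B \Subset D$ and observe that since $f^{(-t)}$ is a strictly positive continuous spatial function, we have the pointwise bound
\[
X_t(B) = \int_B \mathrm{d}X_t \le \left(\sup_{x\in B}\frac{1}{f^{(-t)}(x)}\right) \int_B f^{(-t)}(x)\, X_t(\mathrm{d}x) \le \left(\sup_{x\in B}\frac{1}{f^{(-t)}(x)}\right) N_t,
\]
using that the integrand $f^{(-t)}$ is bounded below on $B$ by $\inf_{x\in B} f^{(-t)}(x) > 0$ (finiteness and positivity of this infimum follow from continuity of $f^{(-t)}$ together with $\overline B \Subset D$). Then I would invoke the almost sure convergence $N_t \to N_\infty < \infty$: on the event of full probability where this limit exists and is finite, the family $\{N_t : t\ge 0\}$ is bounded along the relevant time scale, so $X_t(B) = \mathcal{O}\!\left(\sup_{x\in B} 1/f^{(-t)}(x)\right)$ a.s., which is exactly \eqref{estimate.with.f}.

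The only genuine points requiring care are: (a) that $N_t$ really is a supermartingale — but this is precisely the content of Assumption A as spelled out in the text, via $E_\mu(N_t \mid \mathcal F_s) = \int_D (T^{L+\beta}_{t-s} f^{(-t)})(x)\, X_s(\mathrm{d}x) \le \int_D f^{(-s)}(x)\, X_s(\mathrm{d}x) = N_s$; and (b) that $N$ has a version which is continuous (hence the supermartingale convergence theorem applies to give an a.s. limit) — this follows from Lemma \ref{simple.lemma} applied with $\mu_t = X_t(\omega)$ and $H = F$, using the compact support property to guarantee that the supports stay in a fixed compact $C_{t_0,\varepsilon} \Subset D$ and using the weakly continuous version of $X$ furnished by the preceding claim. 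I expect the main (mild) obstacle to be organizing the measure-zero exceptional sets consistently — the compact support exceptional set $\Omega_1$ from \eqref{CSP.all.s}, the continuity-of-$N$ exceptional set, and the convergence-of-$N$ exceptional set — but each of these has probability one and their intersection does too, so on that intersection the chain of inequalities above holds for every $t$ and every $B \Subset D$ simultaneously, yielding the claimed almost sure estimate.
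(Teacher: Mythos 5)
Your proof is correct and takes essentially the same route as the paper: the supermartingale property of $N_t=\langle f^{(-t)},X_t\rangle$ supplied by Assumption A, continuity of $N$ via Lemma \ref{simple.lemma} applied to a weakly continuous version of $X$ under the compact support property, almost sure convergence of the continuous nonnegative supermartingale, and then the bound $X_t(B)\le\left(\sup_{x\in B}1/f^{(-t)}(x)\right)N_t$, which is exactly the paper's inequality \eqref{nice.little.estimate}. (One cosmetic remark: Assumption A only gives nonnegativity of $f^{(-t)}$, so $\inf_{B}f^{(-t)}$ need not be strictly positive as you claim; but if it vanishes the right-hand side of \eqref{estimate.with.f} is infinite and the statement is vacuous, so nothing is lost.)
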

In particular, the martingale property would follow if we knew that for an appropriate $f\in C^{+}(D)$, the semiorbit $t\mapsto T_{t}^{L+\beta}(f)$
can be extended from $[0,\infty)$ to $(-\infty,\infty)$. Indeed, we could then define \[
f^{(-t)}(x)=H(\cdot,t):=T_{-t}^{L+\beta}(f)(\cdot),\] which implies the statement in Assumption A with equality.
Hence, in this case, the local growth
can be upper estimated as follows. Let $B\Subset D$ be nonempty and open. Then
\begin{equation}\label{nice.little.estimate}
N_{t}=\langle H(\cdot,t),X_{t}\rangle\ge\langle H(\cdot,t)\textbf{1}_{B},X_{t}\rangle\ge\inf_{x\in B}H(x,t)\, X_{t}(B).
\end{equation}
Since $N_{t}$ has an almost sure limit, therefore 
\[
X_{t}(B)=\mathcal{O}\left(\sup_{x\in B}\frac{1}{H(x,t)}\right)  =\mathcal{O}\left(\sup_{x\in B}\frac{1}{T_{-t}^{L+\beta}(f)(x)}\right)\quad \text{a.s.}
\]

\medskip

\begin{rem}
It is of independent interest, that, using (\ref{nice.little.estimate})
one can always upper estimate the semigroup as \[
(T_{t}\textbf{1}_{B})(x)=E_x X_t(B)\le\sup_{y\in B}H^{-1}(y,t)\cdot (T_{t}(H(\cdot,t)))(x)=\sup_{y\in B}H^{-1}(y,t)
\cdot f(x),\]
 where $H$ is as before. $\hfill\diamond$ \end{rem}

\subsection{The `$p$-generalized principal eigenvalue' and a sufficient condition}
The discussion in the previous subsection gives rise to the following questions:
\begin{enumerate}
\item When is Assumption A satisfied?\textbf{ }
\item When can the semiorbit $t\mapsto T_{t}^{L+\beta}(f)$ be extended?
\end{enumerate}
We will focus  on the first question. For simplicity, use the shorthand
$T_{t}:=T_{t}^{L+\beta}$. Assume that $\vartheta$ is a 
%smooth {\bf PRECISELY?} monotone increasing
continuous non-decreasing
 function on $[0,\infty)$, satisfying
$\vartheta (0)=0$,
\begin{equation}
\vartheta(s+t)\le C[\vartheta(s)+\vartheta(t)],\ s,t\ge0,\label{eq:bound with constant}\end{equation}
with some $C>1$ (depending on $\vartheta)$ and that $\gamma:=e^{-\vartheta}$
satisfies for all $g\in C_{c}^{+}$ that
\begin{equation}\label{general condition}
 I_g(B):=\int_{0}^{\infty}\gamma(s) \| 1_B  T_{s}g\|_\infty \,\text{d}s<\infty  
\end{equation}
for every $B\Subset D$.  
Then Assumption A is satisfied as well, since, using the monotonicity of $\gamma$, (\ref{general condition})
and dominated convergence, the family \[
\mathcal{G}_g:=\left\{ f^{(-t)}:=\int_{0}^{\infty}\gamma(s+t)T_{s}g\,\text{d}s;\ t\ge0\right\} \]
is    continuous  in $t$, uniformly on bounded spatial domains, and a trivial computation shows that
$T_{t}f^{(-t-s)}\leq f^{(-s)}$.  Assume now that the compact support property holds. By \eqref{estimate.with.f}, for a nonempty open $B\Subset D,$ \[
X_{t}(B)=\mathcal{O}\left(\left[\inf_{x\in B}\int_{0}^{\infty}\gamma(s+t)(T_{s}g)(x)\,\text{d}s\right]^{-1}\right)\ \text{a.s.},\]
and so by (\ref{eq:bound with constant}), and by the fact that $C>1$,
\begin{align}\label{estimate.with.theta}
X_{t}(B)&=\mathcal{O}\left(\gamma(t)^{-C}\left[\inf_{x\in B}\int_{0}^{\infty}\gamma(s)^{C}(T_{s}g)(x)\,\text{d}s\right]^{-1}\right)\nonumber\\
&=\mathcal{O}\left(\gamma(t)^{-C}\right)=\mathcal{O}(e^{C\vartheta(t)})\ \text{a.s.}
\end{align}
Consider now the particular case when $\vartheta(t):=\lambda t^{p}$
with $\lambda>0,\ p\geq1$ and assume that condition \eqref{general condition}
holds: there exists a non-trivial $g\geq0$ so that
\begin{equation}
f^{(0)}(B):=I_g(B)=\int_{0}^{\infty}e^{-\lambda s^{p}} \| 1_B T_{s} g \|_\infty \,\text{d}s<\infty
 \hbox{ for every  } B\Subset D.\label{eq:condition_with_p}\end{equation}
Then, by convexity, $C=C_{p}=2^{p-1}$ satisfies (\ref{eq:bound with constant}),
and so, using (\ref{estimate.with.theta}), one has
\begin{equation}\label{from.conv}
X_{t}(B)=\mathcal{O}\left(\exp(2^{p-1}\lambda t^{p})\right)\ P_{\mu}-\text{a.s.}
\end{equation}
If (\ref{eq:condition_with_p}) holds with some $\lambda>0,\ p\geq1$
and a non-trivial $g\geq0$, then we will say that the `$p$-\textit{generalized
principal eigenvalue}' of $L+\beta$, denoted by  
$\lambda_{c}^{(p)}$,  is finite and $\lambda_{p} \leq \lambda $.
More formally, we make the following definition.
\begin{defn}[$p$-generalized principal e.v.]\label{def.pgpe}
For a given $p\ge1$ we define the $p$-generalized principal eigenvalue
of $L+\beta$ on $D$ by 
\begin{eqnarray*}
\lambda_{c}^{(p)}  &:=&
 \inf\left\{ \lambda\in\mathbb{R}:  \ \exists\ \mathbf{0}\neq g\in B^+_b(D) \hbox{ so that }   \right.\\ 
 &&  \left. \hskip 0.8truein  \int_{0}^{\infty}e^{-\lambda s^{p}} \| 1_B T_{s} g \|_\infty \,\text{d}s<\infty
 \hbox{ for every  } B\Subset D\right\} .
\end{eqnarray*}
\end{defn}
For more on the $p$-generalized principal eigenvalue, see the Appendix.

\medskip

Let us now reformulate \eqref{from.conv} in terms of the $p$-generalized principal eigenvalue.
\begin{thm}[Local growth with pgpe]\label{growth.rate.with.pgpe}
Assume the compact support property and that 
$\lambda_c^{(p)}<\infty$
 with some $p\ge 1$. Then, for $B\Subset D, \varepsilon>0$, and $\mu\in\mathcal{M}_c(D)$, one has, as $t\to\infty$, that
 $$
  X_{t}(B)=\mathcal{O}\left(\exp\left( (2^{p-1}\lambda^{(p)}_c+\varepsilon)t^{p}\right) \right)\ P_{\mu}-\text{a.s.}
 $$ 
\end{thm}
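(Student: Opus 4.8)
The plan is to reduce the statement directly to the chain of estimates already assembled in this subsection, so the work is mostly bookkeeping about the quantifiers hidden in the definition of $\lambda_c^{(p)}$. First I would fix $p\ge 1$, fix $\varepsilon>0$, and fix a nonempty open $B\Subset D$ and $\mu\in\mathcal{M}_c(D)$. Since $\lambda_c^{(p)}<\infty$, by Definition \ref{def.pgpe} there is some $\lambda$ with $\lambda_c^{(p)}\le \lambda < \lambda_c^{(p)}+\delta$ (for $\delta>0$ to be chosen) and a nontrivial $g\in B_b^+(D)$ such that
\[
\int_0^\infty e^{-\lambda s^p}\,\|1_{B'}T_s g\|_\infty\,\mathrm ds<\infty\qquad\text{for every }B'\Subset D.
\]
Strictly speaking Definition \ref{def.pgpe} allows $g\in B_b^+(D)$ rather than $g\in C_c^+(D)$, so a small preliminary step is needed to run the argument of \eqref{eq:condition_with_p}–\eqref{from.conv} with such a $g$; I would note that one may replace $g$ by $g\wedge N\cdot 1_{B''}$ for a large ball $B''\Subset D$ and large $N$ (or invoke the fact, recorded after Definition \ref{def.pgpe} and in the Appendix, that the class of admissible test functions can be taken to be $C_c^+$), using monotonicity of $T_s$ to keep the integral finite; alternatively one simply observes that the construction $\mathcal G_g$ and the supermartingale argument only used boundedness of $g$, not compact support or continuity, so nothing changes.

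With such a $\lambda$ and $g$ in hand, set $\vartheta(t):=\lambda t^p$ and $\gamma:=e^{-\vartheta}$. By convexity of $t\mapsto t^p$ for $p\ge 1$ we have $\vartheta(s+t)\le 2^{p-1}(\vartheta(s)+\vartheta(t))$, so \eqref{eq:bound with constant} holds with $C=C_p=2^{p-1}$ (here one should keep in mind the harmless edge case $p=1$, where $C_p=1$ and the displayed bound is still valid with constant $C=1$; the estimate \eqref{estimate.with.theta} used $C>1$ only to drop the bracketed infimum, which for $C=1$ is just $f^{(0)}(B)$, still finite). Condition \eqref{general condition} is exactly the finiteness hypothesis above, so Assumption A holds with the family $\mathcal G_g$, and since we are assuming the compact support property, Lemma \ref{f.estimates} applies. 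Feeding $\vartheta(t)=\lambda t^p$ through \eqref{estimate.with.f}–\eqref{estimate.with.theta} gives
\[
X_t(B)=\mathcal O\!\left(\exp\!\left(2^{p-1}\lambda\, t^p\right)\right)\qquad P_\mu\text{-a.s.}
\]

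Finally I would absorb the gap between $\lambda$ and $\lambda_c^{(p)}$: given $\varepsilon>0$, choose $\delta>0$ small enough that $2^{p-1}\delta<\varepsilon$, and then choose $\lambda<\lambda_c^{(p)}+\delta$ as above, so that $2^{p-1}\lambda<2^{p-1}\lambda_c^{(p)}+\varepsilon$; substituting into the last display yields
\[
X_t(B)=\mathcal O\!\left(\exp\!\left((2^{p-1}\lambda_c^{(p)}+\varepsilon)\,t^p\right)\right)\qquad P_\mu\text{-a.s.},
\]
which is the claim. The only genuinely non-routine point is the first one — making sure the $B_b^+$ test function permitted by Definition \ref{def.pgpe} can really be run through the $\mathcal G_g$/supermartingale machinery of the previous subsection, which was phrased for $C_c^+$ or $C^+$ functions; I expect this to be a one-line remark (monotonicity plus boundedness) rather than a real obstacle, and everything else is a direct citation of \eqref{estimate.with.f}, \eqref{estimate.with.theta}, and \eqref{from.conv}.
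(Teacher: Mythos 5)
Your proposal is correct and follows essentially the same route as the paper: the paper's proof of Theorem \ref{growth.rate.with.pgpe} is exactly the specialization of the chain \eqref{general condition}--\eqref{estimate.with.theta}--\eqref{from.conv} to $\vartheta(t)=\lambda t^{p}$ with $\lambda$ an admissible value slightly above $\lambda_c^{(p)}$, followed by absorbing the slack into $\varepsilon$, which is precisely your argument. Your additional remarks (running the machinery with a $B^+_b(D)$ test function instead of $C_c^+(D)$, and the $p=1$, $C=1$ edge case) merely make explicit points the paper leaves implicit, and do not change the approach.
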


\begin{rem} \label{aboutCSP}The assumption that the compact support property holds is technical in nature. We only need it to guarantee the continuity of $N$. In fact, we suspect that this assumption can be dropped in Theorem \ref{growth.rate.with.pgpe}.$\hfill\diamond$
\end{rem}
We now revisit a previous example.
\begin{example}
[The $(\frac{1}{2}\Delta,|x|,\alpha,\mathbb{R}^{d})$-superprocess]\label{revisit}
Let $D=\mathbb{R}^{d},\ L=\frac{1}{2}\Delta$, $\beta(x)=|x|$, and
$\alpha >0$, and note that the compact support property holds for this example. Although by Lemma \ref{slight.unbddness}, $\lambda_{c}=\infty$,
using (\ref{eq:two_sided_estimate}), and the estimates preceding it, it follows that $\lambda_{c}^{(3+\varepsilon)}\le 0$
for all $\varepsilon>0.$ Also, (\ref{general condition}) is satisfied
with any $\vartheta(t)=-t^{3}/2-f(t)$ and $\alpha >0,$ provided
$e^{-f(t)}$ is integrable. Let $K>0$ and $\widehat{C}:=\max\{4,K\}$.
Using the inequality $(t+s)^{3}\le4(t^{3}+s^{3})$, one obtains the
estimate
$$
X_{t}(B)=\mathcal{O}\left(\exp\left[\widehat{C}\left(t^{3}/2+f(t)\right)\right]\right),\ P_{\mu}-\text{a.s.},$$ for $\mu\in\mathcal{M}_c(D), B\Subset \mathbb R^d$ and
for any function $f\ge0$ satisfying
\[
f(t+s)\le K(f(t)+f(s)).
\]
For example, taking $f(t):=\varepsilon t^{r},\ \varepsilon>0,\ 0<r<1,$ one
obtains that for $B\Subset \mathbb R^d$,
\[
X_{t}(B)=\mathcal{O}(\exp[2t^{3}+\varepsilon't^{r}]),\ P_{\mu}-\text{a.s.}\]
\end{example}
We conclude with an open problem.
\begin{problem}
In Example \ref{revisit}, what is the exact  growth order of  $X_{t}(B),\ B\Subset\mathbb{R}^{d}$? Note that Theorem \ref{T:1.1} answers this question for the {\it global} mass when $\beta=\alpha$. See also Corollary \ref{nice.upper.est}.
\end{problem}

\section{Poissonization method  for growth rate and  for spatial spread estimates}\label{S:6}

In this section we will study the superdiffusion corresponding to the operator $\frac12 \Delta u+\beta u-\alpha u^2$ on $\R$ with $\beta (x)=|x|^p$
for $p\in (0, 2]$, and study the precise growth rate for its total mass by using a method of Poissonization.
 An upper bound for the spatial spread when $\beta (x)=|x|^2$ will also be given.
 
\subsection{General remarks on Poissonization}

We start with a Poissonization method due to Fleischmann and Swart \cite{FleischmannSwart}.
Let $(X,P)$ be the superprocess corresponding to the operator $Lu+\beta u-\alpha u^2$ on $D\subset \R^d$ and $(Z,\mathsf{P})$  the branching diffusion on $D$ with branching rate $\beta$.

 The more elementary version of Poissonization   is the fact that for a given $t>0$,
the following two spatial point processes are equal in law:
\begin{enumerate}
\item[(a)] the spatial point process $Z_t$  under $\mathsf{P}_x$;
\item[(b)] a spatial Poisson point process (PPP) $Z^*_t$ with the random intensity measure $X_t$, where $X_t$ is  the superprocess at time $t$ under $P_{\delta_{x}}$. 
 \end{enumerate}
(See Lemma 1 and Remark 2 in \cite{FleischmannSwart}.)

This is not a process level coupling, as it only matches the one-dimensional distributions. However, Fleischmann and Swart provided a  coupling of $X$ and $Z$ {\it as processes} too in \cite{FleischmannSwart}.

{\bf Convention:} Let us now introduce the following notation for convenience: when we write $\mathsf{P}_0$, it denotes the law of the process, starting with measure $\delta_0$, in case of $X$, and the law starting with a {\sf Poisson(1)} number of particles at the origin, in case of $Z$. In particular, $Z$ is the `empty process' with $\mathsf{P}_0$-probability $1/e$. ($\mathsf{E}_0$ is meant similarly.)

\medskip
Fleischmann and Swart proved that the two processes can be coupled (i.e., can be defined on the same probability space) in such a way that (with the same $\mathsf{P}_0$ because of the coupling)
\begin{equation}\label{FScoupling}
 \mathsf{P}_0[Z_t\in\cdot \mid (X_s)_{0\le s\le t}]= \mathsf{P}_0[\mathsf{Pois}(X_t)\in \cdot\mid X_t],\ \text{a.s.}\ \forall\ t\ge 0,
\end{equation}
where $\mathsf{Pois}(\mu)$ denotes the PPP with intensity $\mu$ for a finite measure $\mu$.
(See their formula (1.2) and note that in our case, the function $h$ appearing in the formula is identically one.)
Formula \eqref{FScoupling} says that the conditional law of $Z_t$, given the history of $X$ up to $t$, is the law of a PPP with
intensity $X_t$. (In fact they prove an even stronger version, involving {\it historical processes} in their Theorem 6.)
\begin{assumption}
Because of the Poissonization method, we will assume that $\alpha=\beta$ (See Lemma 1 and Remark 2 in \cite{FleischmannSwart})
\end{assumption}
\noindent{\bf Note:} for $\alpha\ge \beta$, it is easy to see that the upper bounds still hold. (Reason: we have an extra `death' term if they are not equal. See again \cite{FleischmannSwart}.)

\medskip
We will use the  
abbreviation 
  FALT:=`for arbitrarily large times'=for some sequence of times tending to $\infty$, and
FALn:=`for arbitrarily large $n$s=for some sequence of integers tending to $\infty$'.

\subsection{Upgrading the Fleischmann-Swart coupling to stopping times}
\bigskip
We need to upgrade the coupling result to nonnegative, finite stopping times, as follows. Let $\mathcal{F}^X$ denote the canonical filtration of $X$, that is, let $\mathcal{F}^X:=\{\mathcal{F}_t^X;t\ge 0\}.$
\begin{thm}[Enhanced coupling]\label{enhanced}
Given the Fleischmann-Swart coupling, it also holds that for an almost surely finite and nonnegative $\mathcal{F}^{X}$-stopping time $T$,
$$
{\mathsf P_0}\left[Z_T\in\cdot\mid (X_s)_{0\le s\le T}\right]={\mathsf P_0}\left[
\mathsf {Pois}(X_T)\in\cdot \mid X_T\right],\ \text{a.s.}
$$
\end{thm}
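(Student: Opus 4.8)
The plan is to transfer \eqref{FScoupling} from deterministic times to $T$ by approximating $T$ from above by dyadic stopping times. Since conditioning on $(X_s)_{0\le s\le T}$ amounts to conditioning on the stopped $\sigma$-algebra $\mathcal{F}^X_T$, and since the law of an $\mathcal{M}_f(D)$- (resp.\ point-configuration-) valued random variable is determined by its Laplace functional, it suffices to show that for every $\varphi\in C_c^+(D)$,
\[
\mathsf{E}_0\!\left[e^{-\langle\varphi,Z_T\rangle}\mid\mathcal{F}^X_T\right]=\Phi_\varphi(X_T),\qquad \Phi_\varphi(\mu):=\exp\!\big(-\langle 1-e^{-\varphi},\mu\rangle\big),
\]
the right-hand side being the Laplace functional of $\mathsf{Pois}(\mu)$. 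Note that $\Phi_\varphi$ takes values in $(0,1]$ and, because $1-e^{-\varphi}\in C_c^+(D)$, is continuous on $\mathcal{M}_f(D)$ in the weak topology.

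First I would establish this identity for the dyadic stopping times $T_n:=2^{-n}\lceil 2^nT\rceil$, which on $\{T<\infty\}$ are a.s.\ finite $\mathcal{F}^X$-stopping times with values in $2^{-n}\mathbb{N}$ satisfying $T_n\downarrow T$. Given $A\in\mathcal{F}^X_{T_n}$, decompose over the events $\{T_n=t_k\}$, $t_k:=k2^{-n}$: each such event lies in $\mathcal{F}^X_{t_k}$, and $\mathcal{F}^X_{T_n}\cap\{T_n=t_k\}=\mathcal{F}^X_{t_k}\cap\{T_n=t_k\}$, so $A\cap\{T_n=t_k\}\in\mathcal{F}^X_{t_k}=\sigma((X_s)_{s\le t_k})$. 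Applying \eqref{FScoupling} at the deterministic time $t_k$ on this set, and summing over $k$, then yields $\mathsf{E}_0[e^{-\langle\varphi,Z_{T_n}\rangle}\mid\mathcal{F}^X_{T_n}]=\Phi_\varphi(X_{T_n})$, where we use that $\Phi_\varphi(X_{T_n})$ is $\mathcal{F}^X_{T_n}$-measurable.

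Next I would pass to the limit $n\to\infty$. Fix $A\in\mathcal{F}^X_T$; since $T\le T_n$ one has $A\in\mathcal{F}^X_{T_n}$, hence $\mathsf{E}_0[e^{-\langle\varphi,Z_{T_n}\rangle}\mathbf{1}_A]=\mathsf{E}_0[\Phi_\varphi(X_{T_n})\mathbf{1}_A]$ for every $n$. Because $T_n\downarrow T$ and both $X$ and $Z$ have right-continuous paths in the weak topology of measures — here $X$ is in fact continuous, since the compact support property holds for $\beta(x)=|x|^p$, $p\in(0,2]$, by Claims \ref{csp.holds} and \ref{csp.still.holds}, while $Z$ has right-continuous paths and is a.s.\ finite at all times (for $p<2$ since $\mathsf{E}_0|Z_t|=T^{L+\beta}_t1(0)<\infty$, and for $p=2$ by the argument of Example \ref{BM.with. quadratic potential}) — we obtain $\langle\varphi,Z_{T_n}\rangle\to\langle\varphi,Z_T\rangle$ and $X_{T_n}\to X_T$ weakly, $\mathsf{P}_0$-a.s. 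Since the integrands are bounded by $1$ and $\Phi_\varphi$ is weakly continuous, bounded convergence gives $\mathsf{E}_0[e^{-\langle\varphi,Z_T\rangle}\mathbf{1}_A]=\mathsf{E}_0[\Phi_\varphi(X_T)\mathbf{1}_A]$. Since $A\in\mathcal{F}^X_T$ is arbitrary and $\Phi_\varphi(X_T)$ is $\mathcal{F}^X_T$-measurable, this is the claimed conditional Laplace identity; letting $\varphi$ range over $C_c^+(D)$ completes the proof.

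The step I expect to be the main obstacle is making the two reductions watertight: (i) verifying that \eqref{FScoupling} genuinely conditions on the entire history $\mathcal{F}^X_t$, not merely on $X_t$, which is exactly what allows it to be combined with the history-measurable event $\{T_n=t_k\}$ in the dyadic step; and (ii) justifying the path regularity used in the limit — that in the present regime both $X$ and $Z$ admit versions with right-continuous measure-valued trajectories and that $Z$ is a.s.\ finite at all times — so that $X_{T_n}\to X_T$ and $Z_{T_n}\to Z_T$ along the approximating sequence.
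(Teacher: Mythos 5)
Your proof is correct, and its skeleton is the same as the paper's: approximate $T$ from above by dyadic-valued stopping times $T_n$, establish the identity at $T_n$ by decomposing over $\{T_n=t_k\}$ and applying \eqref{FScoupling} at the deterministic times $t_k$, then pass to the limit using weak continuity of $X$ (available via the compact support property) and right-continuity of $Z$. Where you genuinely diverge is the limiting step. The paper keeps working with conditional expectations relative to the shrinking $\sigma$-algebras $\mathcal{F}^X_{T_n}$, splits $\mathsf{E}_0[\exp\langle -f,Z_{T_n}\rangle\mid\mathcal{F}^X_{T_n}]$ into a term with $Z_T$ and an error term, sends the error to $0$ in $L^1$ by bounded convergence, and identifies the limit of the main term via the reverse martingale convergence theorem together with $\mathcal{F}^X_{T^+}=\bigwedge_n\mathcal{F}^X_{T_n}$; this yields the slightly stronger statement with $\mathcal{F}^X_{T^+}$ in place of $\mathcal{F}^X_T$ (noted in the paper's remark). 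You instead fix a single event $A\in\mathcal{F}^X_T\subseteq\mathcal{F}^X_{T_n}$, write the identity at $T_n$ as an equality of unconditional expectations against $\mathbf{1}_A$, and pass to the limit by bounded convergence, using weak continuity of $\Phi_\varphi$ on $\mathcal{M}_f(D)$ and $\mathcal{F}^X_T$-measurability of $\Phi_\varphi(X_T)$. This is more elementary — no martingale convergence machinery and no identification of $\bigwedge_n\mathcal{F}^X_{T_n}$ is needed — at the cost of delivering only the $\mathcal{F}^X_T$-version stated in the theorem rather than the $\mathcal{F}^X_{T^+}$ refinement (though your argument would extend to any $A$ lying in all $\mathcal{F}^X_{T_n}$). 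Your two flagged obstacles are handled correctly: \eqref{FScoupling} does condition on the full history $(X_s)_{0\le s\le t}$, and the path-regularity inputs you cite are exactly those the paper invokes.
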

\begin{rem} Note that
\begin{enumerate}
\item The lefthand side is just another notation for ${\mathsf P_0}\left[Z_T\in\cdot\mid \mathcal{F}^X_{T}\right].$ 
Actually, as the proof below reveals,  a bit stronger result is also true: $\mathcal{F}^X_{T}$ can be replaced even by $\mathcal{F}^X_{T^{+}}$.
\item For the time of extinction of $X$,  the result is not applicable. Indeed, using that $\alpha=\beta$, it is easy to show that for this $T$, we have $T=\infty$ with positive probability.
\end{enumerate}
\end{rem}
\begin{proof}
As usual, we will  approximate $T$ with a decreasing sequence of countable range stopping times.

We need the facts that, as measure-valued processes, both $X$ and $Z$  are right-continuous, and $X$ is in fact continuous. We proved weak continuity  for $X$, see Claims 24 and 26. For $Z$, right-continuity is elementary.

We now turn to the proof of the statement of the theorem. 
Following pp. 56--58 in \cite{ChungWalsh}, take a general nonnegative $\mathcal{F}^{X}$-stopping time $T$, and let 
 $$ {\mathbb T}:=\{k/2^m\mid k,m\ge 0\}  
$$
be the dyadic set. 
For $n\ge 1$, define the 
 ${\mathbb T}$-valued
 $\mathcal{F}^{X}$-stopping time (in \cite{ChungWalsh}, `strictly optional' is used instead of `stopping') 
$$T_n:=\frac{\lfloor2^{n}T\rfloor+1}{2^{n}}.$$
Then $T_n\downarrow T$ uniformly in $\omega$. In fact (see \cite{ChungWalsh}), 
\begin{equation}\label{wedge}
\mathcal{F}^X_{T^{+}}=\bigwedge_{n=1}^{\infty}\mathcal{F}^X_{T_{n}},
\end{equation}
where the righthand side is the intersection of the $\sigma$-algebras.

Fix $n\ge 1$. Since $T_n$ has countable range, 
$$
{\mathsf P_0}\left[Z_{T_{n}}\in\cdot\mid (X_s)_{0\le s\le T_{n}}\right]={\mathsf P_0}\left[
\mathsf {Pois}(X_{T_{n}})\in\cdot \mid X_{T_{n}}\right],\ \text{a.s.}
$$
Indeed, using Laplace-transforms and the Campbell formula for PPP, this is equivalent to the assertion that for each bounded and continuous $f\ge 0$,
\begin{equation}\label{Campbell}
{\mathsf E_0}\left[\exp \langle -f,Z_{T_{n}}\rangle \mid (X_s)_{0\le s\le T_{n}}\right] =
\exp\left(-\int_{\mathbb R^{d}}(1-e^{-f(x)})X_{T_{n}} (\mathrm{d}x)\right)\hbox{ a.s.}
\end{equation}
To provide a rigorous proof for \eqref{Campbell}, let $A\in \mathcal{F}^X_{T_{n}}$ and for 
 $t\in {\mathbb T}$, define
 $$
A_t:=A\cap \{T_n=t\}\in\mathcal{F}^X_t.
$$
Since $T_n$ has countable range, we have almost surely,
$$
{\mathsf E_0}\left[\exp(\langle -f,Z_{T_{n}}\rangle) ; A\right]=
 \sum_{t\in {\mathbb T}}{\mathsf E_0}\left[\exp(\langle Z_{t},-f\rangle) ; A_t\right].
$$
Since $A_t\in\mathcal{F}^X_t$,  
by the Fleischmann-Swart coupling, the last sum equals 
$$
\sum_{t\in {\mathbb T}} \exp\left[-\int_{\mathbb R^{d}}(1-e^{-f(x)})X_{t} (\mathrm{d}x)\right]{\mathsf P_0}(A_t), \hbox{ a.s.},
$$
which is the same as 
$${\mathsf E_0} \left[\exp\left[-\int_{\mathbb R^{d}}(1-e^{-f(x)})X_{T_{n}} (\mathrm{d}x)\right];A\right], \hbox{ a.s.}$$
This completes the proof of \eqref{Campbell}.

Now let $n\to \infty$.
By the continuity of $X$, the a.s. limit of the righthand side in \eqref{Campbell} is 
$$\exp\left[-\int_{\mathbb R^{d}}(1-e^{-f(x)})X_{T} (\mathrm{d}x)\right].$$
Thus, it remains to show that  a.s.,
$$\lim_{n\to\infty}{\mathsf E_0}\left[\exp\langle -f,Z_{T_{n}}\rangle \mid \mathcal{F}_{T_{n}}^X\right]={\mathsf E_0}\left[\exp\langle -f,Z_{T}\rangle\mid \mathcal{F}_T^X\right].$$
Note that we already know that the a.s. limit exists and just have to identify it. Hence, it is enough to prove that ${\mathsf E_0}\left[\exp\langle -f,Z_{T}\rangle\mid \mathcal{F}_T^X\right]$ is the limit in $L^1$, for example.

Clearly,
\begin{eqnarray*}
&&{\mathsf E_0}\left[\exp\langle -f,Z_{T_{n}}\rangle \mid \mathcal{F}_{T_{n}}^X\right] \\
&=&   {\mathsf E_0}\left[\exp\langle -f,Z_{T}\rangle \mid \mathcal{F}_{T_{n}}^X\right]+   
{\mathsf E_0}\left[\exp\langle -f,Z_{T_{n}}\rangle-\exp\langle -f,Z_{T}\rangle \mid \mathcal{F}_{T_{n}}^X\right] \\
&=:& A_n+B_n.
\end{eqnarray*}
Then $\lim_{n\to \infty}B_n=0$  in $L^1$, because
\begin{eqnarray*}
&&{\mathsf E_0}\left(|{\mathsf E_0}\left[\exp\langle -f,Z_{T_{n}}\rangle-\exp\langle -f,Z_{T}\rangle \mid \mathcal{F}_{T_{n}}^X\right]|\right)\\
&\le & 
{\mathsf E_0}\left({\mathsf E_0}\left[|\exp\langle -f,Z_{T_{n}}\rangle-\exp\langle -f,Z_{T}\rangle| \mid \mathcal{F}_{T_{n}}^X\right]\right) \\
&=&    {\mathsf E_0}\left(|\exp\langle -f,Z_{T_{n}}\rangle-\exp\langle -f,Z_{T}\rangle|\right)\to 0  \quad \hbox{as }  n\to\infty,
\end{eqnarray*}
where the last step uses bounded convergence along with the $\omega$-wise right continuity of $Z$.

 Finally, since $T_n$ is decreasing,
$$\lim_{n\to\infty}A_n={\mathsf E_0}\left[\exp\langle -f,Z_{T}\rangle\mid \mathcal{F}^{X}_{T^{+}}\right], \text{a.s. and in}\ L^1$$
by \eqref{wedge} and the (reverse) Martingale Convergence Theorem for conditional expectations.
\end{proof}

\subsection{The growth of the total mass; proof of Theorems \ref{GCT} and \ref{T:1.1}}

The almost sure growth rate of the total mass has been described in \cite{BBHH2010} for $Z$ on $\R$ with  $\beta(x)=Cx^2, C>0$, and in \cite{BBHHM2015} for the case when $\beta(x)=|x|^p, 0\le p<2$.
For the first  case, the authors have verified double-exponential growth:
$$\lim_{t\to\infty}(\log\log |Z_t|)/t=2\sqrt{2}C,\ \text{a.s.}$$
For $\beta(x)=|x|^p, 0\le p<2$, it has been  shown that
$$\lim_{t\to\infty}\frac{1}{t^{\frac{2+p}{2-p}}}\log |Z_t|=K_p,\ \text{a.s.},$$
where $K_p$ is a positive constant, determined by a variational problem. (Also, for $p\in (0,2]$, right-most particle speeds are given.) Note that these proofs carry through for the case when $Cx^2$ (resp. $|x|^p$) is replaced by $1+Cx^2$ (resp. $1+|x|^p$), too.

We are going to utilize these results, as well as a general comparison result which produces an upper/lower bound on $|X|$ once one has an upper/lower bound on $Z$. This comparison result is based on Poissonization.

But first we need some basic facts about general superdiffusions. In what follows, we are going to use several results from \cite{AOP99}. Although in that paper the assumption $\lambda_c<\infty$ was in force, the results are still applicable in our setting. The reason is that for all the results we are using in the $\lambda_c =\infty$ case,  the proof only uses the local properties of the coefficients.  

Recall that  $X$ satisfies the compact support property in a number of interesting cases. (See  Claims \ref{csp.holds} and \ref{csp.still.holds}.)

If $S$ stands for survival, then $P_{\delta_{x}}(S^c)=e^{-w_{\mathrm{ext}}(x)}$, where  $w_{\mathrm{ext}}$ is a particular nonnegative solution to the steady state equation
    \begin{equation}\label{steady.state}
        L u+\beta u-\alpha u^2=0. 
    \end{equation} (See Theorem 3.1 in \cite{AOP99}.) Its finiteness and the fact that it solves the equation, follows the same way as in \cite{AOP99}. Finiteness follows from Lemma 7.1 in \cite{AOP99}  (It says that given any $t,R>0$, 
         with positive probability 
    the process may die out by $t$ without ever charging  a ball of radius $R$ around $x$. All one needs is that locally, $\beta$ ($\alpha$) is bounded from above (bounded away from zero).) 
    
By Theorem 3.3 in \cite{AOP99}, $w_{ext}=w_{max}$, whenever the compact support property holds, where $w_{\mathrm{max}}$ denotes the maximal nonnegative solution to the steady state  equation \eqref{steady.state}.

Write simply $w$ for $w_{\mathrm{ext}}$.  Assuming that $D=\R^d,\alpha=\beta$ and that the coefficients of $\frac{1}{\alpha} L$ are  bounded from above\footnote{Actually certain growth can be allowed.} (for example $L=\Delta/2$ and $\alpha$ is bounded away from zero), we are going to show that $w\le 1$.
Clearly, $w\le w_{\mathrm{max}}$ and  $w_{\mathrm{max}}$ is also the maximal nonnegative solution to  $\frac{1}{\alpha}L u+u-u^2=0$, corresponding to the $(\frac{1}{\alpha} L, 1, 1;\R^d)$-superprocess, denoted by $\widehat{X}$. 

Denote by $\widehat{w}_{\mathrm{ext}}$ and $\widehat{w}_{\mathrm{max}}$ the corresponding functions for $\widehat{X}$. 
When $1/\alpha \cdot L$ has coefficients bounded from above,  the compact support property holds for $\widehat{X}$ (see Theorem 3.5 in \cite{AOP99}); therefore $$\widehat{w}_{\mathrm{ext}}=\widehat{w}_{\mathrm{max}}={w}_{\mathrm{max}}\ge w.$$ 
Thus, $w\le 1$ follows from $\widehat{w}_{\mathrm{ext}}\le 1$,
which in turn follows from Proposition 3.1 in \cite{AOP99}.  

Next, we need some Poissonian estimates.
\begin{lemma}[Poissonian tail estimates]\label{Poiss.Lemma}
If $Y$ is a Poisson random variable with parameter $\lambda$, then 
\begin{align*}P(Y\le y)\le e^{y-\lambda}\left(\frac{\lambda}{y}\right)^y,\ \text{for}\ y<\lambda;\\
P(Y\ge y)\le e^{y-\lambda}\left(\frac{\lambda}{y}\right)^y,\ \text{for}\ y>\lambda.
\end{align*}
In particular, 
for $k<1$ we have $P(Y\le k\lambda)\le C_k^{\lambda},$ and 
for $k>1$ we have $P(Y\ge k\lambda)\le C_k^{\lambda},$
where $$C_k:=(e/k)^k\cdot (1/e)<1.$$
\end{lemma}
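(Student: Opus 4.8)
The plan is to obtain both inequalities from the exponential Chernoff bound, using the explicit moment generating function $\mathbb{E}[e^{\theta Y}]=\exp(\lambda(e^{\theta}-1))$ of a Poisson$(\lambda)$ variable. For the upper tail, fix $y>\lambda$; for every $\theta>0$ one has $P(Y\ge y)\le e^{-\theta y}\,\mathbb{E}[e^{\theta Y}]=\exp\!\big(\lambda(e^{\theta}-1)-\theta y\big)$. Minimizing the exponent $f(\theta)=\lambda(e^{\theta}-1)-\theta y$ over $\theta>0$ gives the stationary point $e^{\theta}=y/\lambda$, which is admissible precisely because $y>\lambda$ forces $\theta=\log(y/\lambda)>0$; since $f''>0$ this is the minimum, and substituting yields exponent $y-\lambda-y\log(y/\lambda)=(y-\lambda)+y\log(\lambda/y)$, i.e. $P(Y\ge y)\le e^{y-\lambda}(\lambda/y)^{y}$. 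The lower-tail bound follows by the same device applied to $-Y$: for $y<\lambda$ and $\theta>0$, $P(Y\le y)=P(e^{-\theta Y}\ge e^{-\theta y})\le e^{\theta y}\,\mathbb{E}[e^{-\theta Y}]=\exp\!\big(\theta y+\lambda(e^{-\theta}-1)\big)$; minimizing over $\theta>0$ gives $e^{-\theta}=y/\lambda$, admissible since $y<\lambda$ makes $\theta=\log(\lambda/y)>0$, and this produces the identical bound $e^{y-\lambda}(\lambda/y)^{y}$.

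For the ``in particular'' assertion I would simply specialize $y=k\lambda$ in the two displays. A direct computation gives
\[
e^{y-\lambda}(\lambda/y)^{y}=e^{(k-1)\lambda}\,k^{-k\lambda}=\big(e^{k-1}k^{-k}\big)^{\lambda}=\big((e/k)^{k}e^{-1}\big)^{\lambda}=C_{k}^{\lambda},
\]
valid with $k<1$ in the lower-tail bound and $k>1$ in the upper-tail bound. It then remains to check $C_{k}<1$ for $k\neq 1$, equivalently $k\log(e/k)<1$, i.e. $g(k):=k-k\log k<1$. Since $g(1)=1$ and $g'(k)=-\log k$, the function $g$ is strictly increasing on $(0,1)$ and strictly decreasing on $(1,\infty)$, so $g(k)<g(1)=1$ for every $k>0$ with $k\neq 1$, which is exactly $C_{k}<1$.

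I do not expect any genuine obstacle here; the argument is a textbook Chernoff estimate. The only points that need a moment's care are (a) checking that in each of the two cases the optimizing $\theta$ is genuinely positive --- this is precisely where the hypotheses $y>\lambda$ (respectively $y<\lambda$) are used --- and (b) the elementary monotonicity of $g(k)=k-k\log k$ that yields $C_{k}<1$.
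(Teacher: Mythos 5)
Your proposal is correct and follows essentially the same route as the paper: the paper's proof simply says to use the Chernoff bound for the tail estimates and reduces $C_k<1$ to the elementary inequality $1-z<e^{-z}$ with $z=\ln k$, which is equivalent to your monotonicity argument for $g(k)=k-k\log k$. You have merely written out the standard Chernoff optimization in full detail.
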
 
\begin{proof}
Use the Chernoff-bound for the first part. The statement that $C_k<1$, after taking logarithm and defining $z=\ln k$, becomes
$1-z<e^{-z}$.
\end{proof}

\noindent{\it Proof of Theorem \ref{GCT}}. 
We will utilize Lemma \ref{Poiss.Lemma} and Theorem \ref{enhanced}. 

Keeping the Poissonization method in mind, let both $Z$ and $X$ be defined on the probability space $(\Omega,\mathsf{P})$. As before, we will write $\mathsf{P}_0$ to indicate that $Z$ and $X$ are started with a {\sf Poisson(1)}-number of particles at zero and with $\delta_0$, respectively.

\medskip

(i)    For $\varepsilon,t>0$, define the events
\begin{eqnarray*}
E^t_{\varepsilon} & := & \{|X_t|> (1+\varepsilon)f(t)\}; \\
F^t_{\varepsilon/2}& := & \left\{\frac{|Z_t|}{f(t)}\le 1+\varepsilon/2\right\}, \\
G_{\varepsilon/2}^{t}& := & \left\{\frac{|Z_t|}{f(t)}> 1+\varepsilon/2\right\}=\left(F_{\varepsilon/2}^{t}\right)^c.
\end{eqnarray*}
Define also
\begin{eqnarray*}
E_{\varepsilon} & := & \{|X_t|> (1+\varepsilon)f(t),\ \text{FALT}\}; \\
H_{\varepsilon/2}& := & \left\{\frac{|Z_t|}{f(t)}> 1+\varepsilon/2,\ \text{FALT}\right\}. 
\end{eqnarray*}
Since
 $$ \mathsf{P}_0\left(\limsup_t \frac{|X_t|}{f(t)}> 1\right)\le \sum_{m\ge 1} 
 \mathsf{P}_0\left( E_{\frac1m}\right),$$
it is enough to show that for $\varepsilon>0$,
$ \mathsf{P}_0\left(E_{\varepsilon}\right)=0.$

Fix $\varepsilon>0$.
For  $\omega\in E_{\varepsilon}$, define a sequence of random times $(t_n)_{n\ge 0}=(t_n(\omega))_{n\ge 0}$ recursively, by  $t_0:=0$ and
$$t_{n+1}:=
\inf\{
t>t_n \mid |X_t |> (1+\varepsilon)f(t)\ \text{and}\ f(t)\ge n+1)
\},\ n\ge 0.$$ (For convenience, define $t_n(\omega)$ for $\omega\in \Omega\setminus E_{\varepsilon}$ in an arbitrary way.) Recall that we have proved that $X$ has weakly continuous trajectories, hence $|X|$ is continuous. Thus $t_n$ is an $\mathcal{F}^X$-stopping time; let $Q_n$ denote its distribution on $[0,\infty)$.

Clearly,  $\liminf_n G_{\varepsilon/2}^{t_{n}}\subset H_{\varepsilon/2}$. Hence, if we  show that
\begin{eqnarray}\label{kellene}
\mathsf{P}_0\left( E_{\varepsilon}\cap \left(\liminf_n G_{\varepsilon/2}^{t_{n}}\right)^c\right)&=&\nonumber \\
\mathsf{P}_0\left( E_{\varepsilon}\cap \left(\limsup_n F_{\varepsilon/2}^{t_{n}}\right)\right)&=&\mathsf{P}_0\left(\limsup_n\, (F_{\varepsilon/2}^{t_{n}}\cap E_{\varepsilon})\right)=0,
 \end{eqnarray}
 then $\mathsf{P}_0\left(E_{\varepsilon}\right)>0$ implies that
$\mathsf{P}_0\left(H_{\varepsilon/2}\right)>0$,
which  contradicts \eqref{feltetel}, and we are done.

To show \eqref{kellene}, by  Borel-Cantelli, it is sufficient to verify that
\begin{equation}\label{summab}\sum_n \mathsf{P}_0\left(F_{\varepsilon/2}^{t_{n}}\cap E_{\varepsilon}\right)<\infty.
\end{equation}
To achieve this, fix $n\ge 1$. Applying Theorem \ref{enhanced} with $T=t_n$, we have  that 
\begin{eqnarray}\label{two.cond,2ndpart} 
\mathsf{P}_0\left(F^{t_{n}}_{\varepsilon/2}\mid E_{\varepsilon}\right)&=&\mathsf{E}_0\left[\mathsf{P}_0\left(F^{t_{n}}_{\varepsilon/2}\mid (X_s)_{0\le s\le t_n}\right)\mid E_{\varepsilon}\right]\nonumber\\
&=& \mathsf{E}_0\left[\mathsf{P}_0\left( \mathsf{Pois}\left(|X_{t_{n}}|)\le(1+\varepsilon/2)f(t_{n}\right)\mid X_{t_{n}}\right)\mid E_{\varepsilon}\right].
 \end{eqnarray}
Set $k=\frac{1+\varepsilon/2}{1+\varepsilon}$.
By \eqref{two.cond,2ndpart} along with Lemma \ref{Poiss.Lemma} (recall  $C_k<1$ and that $f(t_{n})\ge n$),
it follows that, almost surely on $E_{\varepsilon}$,
$$\mathsf{P}_0\left( \mathsf{Pois}\left(|X_{t_{n}}|)\le(1+\varepsilon/2)f(t_{n}\right)\mid X_{t_{n}}\right)\le C_k^{(1+\varepsilon)n}.$$
Thus, $$\mathsf{P}_0\left(F_{\varepsilon/2}^{t_{n}}\cap E_{\varepsilon}\right)\le \mathsf{P}_0\left(F_{\varepsilon/2}^{t_{n}}\mid E_{\varepsilon}\right)\le C_k^{(1+\varepsilon)n},$$
 and since $C_k<1$, \eqref{summab} follows.
 
 \medskip
  
  (ii) 
The  proof is very similar to that of $(i)$, except that we now work on $S$, the condition $|X_t |< (1-\varepsilon)f(t)$ has to be replaced by $|X_t |>(1+\varepsilon)f(t)$ throughout, and we now define
$$t_{n+1}:=
\inf\{
t>t_n \mid n+1<|X_t |< (1-\varepsilon)f(t)
\},\ n\ge 0.$$
(In this case we set $k:=(1-\varepsilon/2)/ (1-\varepsilon)>1.$)
The summability  at the end is still satisfied because of the $n+1<|X_t |$ part in the definition.

Finally, the  statement given by the last sentence in (ii) follows from the fact that $\exp(-\langle w,X_t\rangle)$ is a martingale with expectation $e^{-w(0)}$. This, in turn, is a consequence of the Markov property and the fact that $\mathsf{P}_{\mu}(S^c)=e^{-\langle w,\mu\rangle} $. (See the beginning of the subsection for the definition of $w$.) The martingale limit's expectation cannot be less than $e^{-w(0)}$, but on extinction, the limit is clearly one, and the probability of extinction is also $e^{-w(0)}$. Hence the limit must be zero on $S$, that is $\langle w,X_t\rangle\to\infty$. But we have already checked that $w\le 1$ holds under the assumption.

Theorem \ref{T:1.1} in the Introduction is a consequence of Theorem \ref{GCT}.

\medskip

 \noindent{\it Proof of Theorem \ref{T:1.1}}. 
We treat the non-quadratic case; the quadratic case is similar.
Also, we only discuss the upper estimate; the lower estimate is similar.

Denote $h(t):=Kt^{\frac{2+p}{2-p}}$.
For the upper estimate, we need that the event 
$$E:=\left\{\limsup_t \frac{\log |X_t|}{h(t)}>1\right\}$$
is a zero event. But $E$ occurs if and only if
$$\exists \varepsilon>0:\  \frac{\log |X_t|}{h(t)}>(1+\varepsilon),\ \text{FALT}\Leftrightarrow \exists \varepsilon>0:\   |X_t|>\exp(h(t)(1+\varepsilon)),\ \text{FALT}.$$
Now
$$E\subset A:=\left\{\exists \varepsilon>0:\  \limsup_t \frac{ |X_t|}{\exp(h(t)(1+\varepsilon))}\ge 1.\right\}$$
Write $$\frac{ |X_t|}{\exp(h(t)(1+\varepsilon))}=\frac{ |X_t|}{\exp(h(t))}\frac{1} {\exp(h(t)(\varepsilon))}.$$
The $\limsup$ of the first term is almost surely bounded by one by  Theorem \ref{GCT} and by the corresponding result\footnote{The result for $Z$ is true even if $Z$ starts with $k\ge 1$ particles instead of a single one, as the process can be considered as an independent sum of $k$ processes, each starting with a single particle.} on $Z$, while the second term tends to zero. Working with countably many $\varepsilon$'s (say, $\varepsilon_m:=1/m$), we see that $A$ is a zero event indeed. 
\qed

\subsection{Upper bound for the spatial spread}

\begin{thm}[Upper bound for the spread]\label{ub.spread} Let $\varepsilon>0$. For $d=1$ and $\beta(x)=\alpha(x)=1+|x|^2$, we have
$$P _{\delta_{0}}\left( \lim_{t\to\infty} X_t(B^c(0,\exp((\sqrt{2}+\varepsilon)t)))=0\right)=1.$$
\end{thm}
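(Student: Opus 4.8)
The plan is to deduce the spatial spread bound for the superprocess $X$ from the known right-most particle speed for the associated branching Brownian motion $Z$ with branching rate $\beta(x)=1+|x|^2$, using the Poissonization coupling. Concretely, it is known (this is the result of \cite{BBHH2010}, and it carries through with $Cx^2$ replaced by $1+Cx^2$ with $C=1$) that the maximal displacement $R_t:=\max\{|x|: x\in\mathrm{supp}(Z_t)\}$ satisfies $\limsup_t R_t/e^{\sqrt2\,t}\le 1$ a.s.\ under $\mathsf P_0$; more precisely, for any $\varepsilon>0$ one has $\mathsf P_0(R_t\le e^{(\sqrt2+\varepsilon/2)t}\ \text{for all large }t)=1$. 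The idea is that if $X_t$ were to put mass outside the ball $B(0,e^{(\sqrt2+\varepsilon)t})$ along a sequence of times, then a Poisson point process with intensity $X_t$ would, with probability bounded below, also place a point there, forcing $Z$ to have a particle there and contradicting the bound on $R_t$.

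First I would fix $\varepsilon>0$ and $\delta>0$ small, set $r_\varepsilon(t):=e^{(\sqrt2+\varepsilon)t}$ and $\rho_\varepsilon(t):=e^{(\sqrt2+\varepsilon/2)t}$, so $\rho_\varepsilon(t)<r_\varepsilon(t)$ for $t$ large, and consider the event
$$
A_\varepsilon:=\left\{\limsup_{t\to\infty} X_t\big(B^c(0,r_\varepsilon(t))\big)>0\right\}.
$$
Since $X$ has weakly continuous (hence right-continuous) trajectories with the compact support property (Claim \ref{csp.still.holds}), and since $t\mapsto X_t(B^c(0,r_\varepsilon(t)))$ need not be continuous, I would instead work with the event that this quantity exceeds some level $\eta>0$ for arbitrarily large times (FALT), then take a union over rational $\eta\downarrow 0$. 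On $\{X_t(B^c(0,r_\varepsilon(t)))>\eta,\ \text{FALT}\}$ define a sequence of stopping times $t_0:=0$,
$$
t_{n+1}:=\inf\{t>t_n : X_t(B^c(0,r_\varepsilon(t)))>\eta \text{ and } t\ge n+1\},
$$
which are $\mathcal F^X$-stopping times by right-continuity of $X$ and monotonicity of $r_\varepsilon$ (one should check that $\{X_t(B^c(0,r_\varepsilon(t)))>\eta\}$ generates a stopping time; replacing the strict ball complement with a slightly larger closed one and using right-continuity handles the measurability).

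Next I would apply the enhanced coupling, Theorem \ref{enhanced}, with $T=t_n$: conditionally on $\mathcal F^X_{t_n}$, the point process $Z_{t_n}$ is a PPP with intensity $X_{t_n}$, so on the event $\{t_n<\infty\}$,
$$
\mathsf P_0\big(Z_{t_n}(B^c(0,r_\varepsilon(t_n)))=0 \mid X_{t_n}\big)=\exp\big(-X_{t_n}(B^c(0,r_\varepsilon(t_n)))\big)\le e^{-\eta},
$$
hence $\mathsf P_0\big(Z_{t_n}(B^c(0,r_\varepsilon(t_n)))\ge 1 \mid X_{t_n}\big)\ge 1-e^{-\eta}>0$ on $\{t_n<\infty\}$. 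On the other hand, by the right-most particle speed bound for $Z$, a.s.\ for all large $n$ we have $R_{t_n}\le\rho_\varepsilon(t_n)<r_\varepsilon(t_n)$, i.e.\ $Z_{t_n}(B^c(0,r_\varepsilon(t_n)))=0$; note $t_n\uparrow\infty$ on the event in question by construction (the clause $t\ge n+1$). Thus on the event $\{X_t(B^c(0,r_\varepsilon(t)))>\eta,\ \text{FALT}\}$ we get infinitely many $n$ with $\mathsf P_0(Z_{t_n}\text{ charges the complement}\mid X_{t_n})$ bounded below by a positive constant, yet $Z_{t_n}$ charges nothing there for all large $n$; integrating, this forces the event to have probability zero. (One can make this rigorous by the same Borel–Cantelli scheme as in the proof of Theorem \ref{GCT}: $\sum_n\mathsf P_0(\{Z_{t_n}\text{ does not charge}\}\cap\{X_t(\cdots)>\eta,\text{FALT}\})$ is compared against the unconditional estimate, or one simply argues that positive probability of the $X$-event would contradict $R_t=O(\rho_\varepsilon(t))$ a.s.) Taking a countable union over $\eta=1/m$ and over $\varepsilon=1/k$ gives the theorem.

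The main obstacle I anticipate is the measurability/stopping-time bookkeeping: the map $t\mapsto X_t(B^c(0,r_\varepsilon(t)))$ is only upper semicontinuous at best (mass on the moving sphere boundary), so some care is needed to realize the hitting times $t_{n+1}$ as genuine $\mathcal F^X$-stopping times — the standard fix is to pass to a slightly enlarged closed exceptional region and exploit right-continuity of $t\mapsto X_t$ together with the compact support property, exactly as in the continuity arguments of Section \ref{S:5}. A secondary point is that one must invoke the $Z$-result in the sharp "for all large $t$" form rather than merely "$\limsup\le1$"; since the statement is proved for a fixed speed $\sqrt2$ with an explicit large-deviation rate, upgrading to the slightly slower rate $\rho_\varepsilon(t)=e^{(\sqrt2+\varepsilon/2)t}$ holding eventually is immediate from Borel–Cantelli applied to $\{R_n> e^{(\sqrt2+\varepsilon/2)n}\}$ together with monotonicity of $t\mapsto R_t$ over unit intervals.
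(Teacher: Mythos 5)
Your proposal is correct and follows essentially the same route as the paper: the a.s.\ bound on the (symmetrized) rightmost particle of the branching Brownian motion with rate $1+|x|^2$ is transferred to $X$ through the Poissonization coupling via the elementary vacancy estimate $e^{-\eta}$ for a Poisson point process on a set of intensity mass at least $\eta$; the paper argues by a reverse-Fatou contradiction rather than routing everything through the stopping-time coupling of Theorem \ref{enhanced}, but the two ingredients are identical. One caveat: the Borel--Cantelli variant you sketch does not actually give summability, since each term is only bounded by the fixed constant $e^{-\eta}$ (there is no analogue of the clause $f(t_n)\ge n$ that produced geometric decay $C_k^{(1+\varepsilon)n}$ in the proof of Theorem \ref{GCT}), so you must fall back on the alternative you mention --- if the $X$-event had positive probability, then (intersecting with the $\mathcal F^X_{t_n}$-measurable events $\{X_{t_n}(B^c(0,e^{(\sqrt2+\varepsilon)t_n}))\ge\eta\}$ and applying reverse Fatou) $Z$ would charge $B^c(0,e^{(\sqrt2+\varepsilon)t_n})$ along $t_n\to\infty$ with positive probability, contradicting the a.s.\ spread bound --- which is precisely the paper's argument.
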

\begin{proof}
Clearly, it is enough to prove that for any $\delta>0$,
\begin{equation}\label{clearly}
P_{\delta_{0}} \left( \exists T: X_t(B^c(0,\exp((\sqrt{2}+\varepsilon)t)))\le \delta,\ \text{for}\ t>T\right)=1.
\end{equation}

Harris and Harris  \cite{HH2009} have shown for the (one-dimensional) discrete branching Brownian motion $Z$ with branching rate $\beta$ that
$$\mathsf{P}_0 \left(\limsup_{t\to\infty}\frac{\log M_t}{t}\le \sqrt{2}\right)=1,$$
where $M_t$ is the rightmost particle's position. (Again, they considered $\beta(x)=|x|^2$, but the proof carries through for $\beta(x)=1+|x|^2$ as well.) By symmetry, it follows that
$$\mathsf{P}_0 \left(\limsup_{t\to\infty}\frac{\log \rho_t}{t}\le \sqrt{2}\right)=1,$$
where $\rho_t$ is the radius of the minimal interval containing $\mathrm{supp}(Z_t)$.
That is,
\begin{equation}\label{rho.small}
\mathsf{P}_0 \left( \rho_t> \exp((\sqrt{2}+\varepsilon)t),\ \text{FALT}\right)=0.
\end{equation}
Returning to \eqref{clearly}, we need to show that 
$$p_{\varepsilon}:=P \left( X_t\left(B^c(0,\exp((\sqrt{2}+\varepsilon)t))\right)>\delta,\ \text{FALT}\right)=0.$$
Indeed, suppose that $p_{\varepsilon}>0$. Recall that for a PPP, the probability that a set with mass at least $\delta$ (by the intensity measure)
is vacant is at most $\exp(-\delta)$.

As before, consider the `Poissonization coupling' of the processes $Z$ and $X$.
By the reverse Fatou inequality,\footnote{Which is $\limsup P(A_t)\le P(\limsup A_t)$.} on the event 
$$\left\{ X_t\left(B^c(0,\exp((\sqrt{2}+\varepsilon)t))\right)>\delta,\ \text{FALT}\right\},$$ the discrete point process charges $B^c(0,\exp((\sqrt{2}+\varepsilon)t))$
FALT, with probability at least $e^{-\delta}$.
It follows that the probability in \eqref{rho.small} is positive;
a contradiction.
\end{proof}
\begin{rem} It is not difficult to see that this upper estimate remains valid if $\alpha\ge\beta$ instead of $\alpha=\beta$.$\hfill\diamond$
\end{rem}

\section{Appendix: properties of $\lambda_c^{(p)}$}
 
Recall the definition of the $p$-generalized principle eigenvalue, $\lambda_c^{(p)}$  from Definition \ref{def.pgpe}.
First note that $\lambda_{c}^{(1)}=\lambda_{c},$ because (c.f. Chapter 4 in
\cite{Pinskybook})
\begin{align*}
 \lambda_{c}^{(1)}&:=\inf\left\{ \lambda\in\mathbb{R}:    G_\lambda (x,B)  \hbox{ is locally bounded in $D$ for  some }B\Subset D\right\} 
 \\
& =\lambda_{c,}
\end{align*}
where 
\[
 G_\lambda (x,B):= G^{L+\beta-\lambda}(x,B):=\int_{t=0}^{\infty}p^{L+\beta-\lambda}(t,x,B)\, \mathrm{d}t,
\]
and  $p^{L+\beta-\lambda}(t,\cdot,\cdot)$ denotes the transition
kernel for $L+\beta-\lambda$ on $D$. (When finite on compacts, the measure 
 $G_\lambda (x,\cdot)$
is called the Green measure for $L+\beta-\lambda$ on $D$.)

Next, note that if one replaces the semigroup in the definition of $\lambda_c^{(p)}$ by that of some compactly embedded ball in $D$ (with zero boundary condition), then $\lambda_c^{(p)}$ will definitely not increase, while even this modified value is different from $-\infty$, as $\beta$ is bounded on the ball. This leads to
\begin{proposition}
One has $\lambda_c^{(p)}\in (-\infty,\infty]$.
\end{proposition}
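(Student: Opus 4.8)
By Definition \ref{def.pgpe}, $\lambda_c^{(p)}$ is the infimum of the set $\Lambda_p$ of \emph{admissible} exponents — those $\lambda\in\R$ for which some non-vanishing $g\in B^+_b(D)$ has $\int_0^\infty e^{-\lambda s^p}\|\mathbf 1_B T_s g\|_\infty\,\mathrm ds<\infty$ for every $B\Subset D$ — so the bound $\lambda_c^{(p)}\le\infty$ is automatic (with the usual convention $\inf\emptyset=+\infty$), and the point is to show that $\Lambda_p$ is bounded below. The plan, following the remark preceding the statement, is: fix once and for all a ball $B_0\Subset D$ with smooth boundary; dominate the full Schr\"odinger semigroup $T_s=T_s^{L+\beta}$ on $B_0$ by the Dirichlet--Schr\"odinger semigroup $T^{B_0}_s$ of $L+\beta$ on $B_0$; use that $\beta$ is bounded on the compact set $\overline{B_0}$ to see that $T^{B_0}_s$ has a genuine (finite) exponential growth rate $\mu_0$; and conclude that $s\mapsto T_sg$ must itself grow at least like $e^{\mu_0 s}$ near the centre of $B_0$, which through the defining integral forces every admissible $\lambda$ to exceed $\mu_0\wedge 0$.

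In more detail: since $\beta\in C^\eta(D)$ is bounded on $\overline{B_0}$, the generalized principal eigenvalue $\mu_0:=\lambda_c(L+\beta,B_0)$ of $L+\beta$ on the bounded smooth domain $B_0$ is a finite real number, with a Dirichlet ground state $\phi_0\in C^2(B_0)\cap C(\overline{B_0})$, $\phi_0>0$ in $B_0$, $\phi_0=0$ on $\partial B_0$, satisfying $T^{B_0}_s\phi_0=e^{\mu_0 s}\phi_0$. Now let $g\in B^+_b(D)$ be non-vanishing and let $\lambda\in\Lambda_p$ be an exponent witnessed by $g$. The function $T_1g$ is strictly positive and continuous on $D$ (positivity of the Feynman--Kac kernel on the connected domain $D$, together with interior parabolic regularity), hence $T_1g\ge\varepsilon_g\mathbf 1_{B_0}$ for some $\varepsilon_g>0$. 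Combining this with domain monotonicity of the Feynman--Kac semigroup, with $\phi_0\le\|\phi_0\|_{\infty}\,\mathbf 1_{B_0}$ on $B_0$, and with $T^{B_0}_u\phi_0=e^{\mu_0 u}\phi_0$, one obtains for $s\ge1$
\[
T_sg\ =\ T_{s-1}(T_1g)\ \ge\ \varepsilon_g\,T^{B_0}_{s-1}\mathbf 1_{B_0}\ \ge\ \frac{\varepsilon_g}{\|\phi_0\|_{\infty}}\,T^{B_0}_{s-1}\phi_0\ =\ \frac{\varepsilon_g\,e^{-\mu_0}}{\|\phi_0\|_{\infty}}\,e^{\mu_0 s}\,\phi_0\qquad\text{on }B_0 .
\]
Fixing a concentric ball $B_1\Subset B_0$ on which $\phi_0\ge\delta_0>0$, this yields $\|\mathbf 1_{B_1}T_sg\|_\infty\ge c'_g\,e^{\mu_0 s}$ for all $s\ge1$, with $c'_g>0$.

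It follows that
\[
\infty\ >\ \int_0^\infty e^{-\lambda s^p}\|\mathbf 1_{B_1}T_sg\|_\infty\,\mathrm ds\ \ge\ c'_g\int_1^\infty e^{-\lambda s^p+\mu_0 s}\,\mathrm ds ,
\]
and the last integral is finite only if $\lambda>\mu_0$ (when $p=1$) — consistent with the identity $\lambda_c^{(1)}=\lambda_c$ recorded above — and only if $\lambda>0$, or $\lambda=0$ and $\mu_0<0$ (when $p>1$); in every case $\lambda>\mu_0\wedge 0$. Since $\mu_0$ does not depend on $g$, this shows $\Lambda_p\subseteq(\mu_0\wedge0,\infty)$, hence $\lambda_c^{(p)}=\inf\Lambda_p\ge\mu_0\wedge0>-\infty$, as claimed.

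The only non-routine input is the strict positivity of $T_1g$ (and its local boundedness away from $0$ on $\overline{B_0}$) for an arbitrary non-vanishing $g\in B^+_b(D)$; this is where the potential pathology that $g$ be supported far from $B_0$ gets absorbed, and it rests on irreducibility of the underlying diffusion together with positivity of the killed transition kernel on connected relatively compact subdomains of $D$. I expect the remainder of the argument to be routine.
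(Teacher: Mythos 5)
Your proposal is correct and follows essentially the same route as the paper, whose proof is the one-line remark preceding the statement: replace the semigroup by the Dirichlet--Schr\"odinger semigroup of a compactly embedded ball, on which $\beta$ is bounded and hence the growth rate is a genuine finite exponential, which rules out arbitrarily negative $\lambda$. Your ground-state bound $T^{B_0}_s\phi_0=e^{\mu_0 s}\phi_0$ and the strict positivity of $T_1g$ (needed when $g$ charges no mass near $B_0$, and implicitly requiring $g$ to be positive on a set of positive Lebesgue measure) simply supply the details the paper leaves implicit.
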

Moreover, the following  comparison principle holds:
\begin{proposition}[Comparison]
Let $p>q\ge 1$.

\begin{itemize}
\item[({\sf a})] If $ \lambda_c^{(q)} \geq 0$, then $\lambda_c^{(p)}\le \lambda_c^{(q)}$.

\item[({\sf b})] If $\lambda_c^{(q)} \leq 0$, then $\lambda_c^{(p)}\ge \lambda_c^{(q)}$.

\item[({\sf c})] If $\lambda_c^{(q)}=0$, then $\lambda_c^{(p)}= \lambda_c^{(q)}=0$. In particular, if  $\lambda_c=0$, then $\lambda_c^{(p)}=0$ for all $p>0$.
\end{itemize}
\end{proposition}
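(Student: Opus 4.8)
The proof of the comparison proposition rests on a simple observation about how the weight $e^{-\lambda s^p}$ behaves relative to $e^{-\lambda s^q}$ for large $s$ and small $s$, combined with the monotonicity of the defining condition in $\lambda$. The plan is to argue each of the three cases {\sf (a)}, {\sf (b)}, {\sf (c)} by comparing the integrals $\int_0^\infty e^{-\lambda s^p}\|1_B T_s g\|_\infty\,\mathrm{d}s$ and $\int_0^\infty e^{-\mu s^q}\|1_B T_s g\|_\infty\,\mathrm{d}s$ directly, for suitably chosen $\lambda,\mu$, using the same test function $g$.

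First I would record the elementary inequalities between $s^p$ and $s^q$ for $p>q\ge 1$: one has $s^p\le s^q$ for $s\in[0,1]$ and $s^p\ge s^q$ for $s\ge 1$. For case {\sf (a)}, suppose $\lambda_c^{(q)}\ge 0$ and fix any $\lambda>\lambda_c^{(q)}$ (so in particular $\lambda>0$); pick a non-trivial $0\le g\in B_b^+(D)$ witnessing finiteness of $\int_0^\infty e^{-\lambda s^q}\|1_B T_s g\|_\infty\,\mathrm{d}s$ for every $B\Subset D$. Since $\lambda>0$, on $[0,1]$ we have $e^{-\lambda s^p}\le 1\le e^{\lambda}e^{-\lambda s^q}$, and on $[1,\infty)$ we have $s^p\ge s^q$, hence $e^{-\lambda s^p}\le e^{-\lambda s^q}$; therefore $\int_0^\infty e^{-\lambda s^p}\|1_B T_s g\|_\infty\,\mathrm{d}s\le e^{\lambda}\int_0^\infty e^{-\lambda s^q}\|1_B T_s g\|_\infty\,\mathrm{d}s<\infty$ for every $B\Subset D$. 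This shows $\lambda_c^{(p)}\le\lambda$; letting $\lambda\downarrow\lambda_c^{(q)}$ gives $\lambda_c^{(p)}\le\lambda_c^{(q)}$. For case {\sf (b)}, suppose $\lambda_c^{(q)}\le 0$; we must show $\lambda_c^{(p)}\ge\lambda_c^{(q)}$, equivalently that no $\lambda<\lambda_c^{(q)}$ can witness finiteness of the $p$-integral. Fix $\lambda<\lambda_c^{(q)}\le 0$, so $\lambda<0$; I claim that for any non-trivial $0\le g\in B_b^+(D)$ and any $B\Subset D$ with $\int_0^\infty e^{-\lambda s^p}\|1_B T_s g\|_\infty\,\mathrm{d}s<\infty$ we would get a contradiction. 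Indeed, choose $\mu$ with $\lambda<\mu<\lambda_c^{(q)}$; since $\mu<0$, on $[0,1]$ we have $e^{-\mu s^q}\le e^{-\mu}\le e^{-\mu}e^{-\lambda s^p}$ (using $e^{-\lambda s^p}\ge 1$ as $\lambda<0$), while on $[1,\infty)$, $s^q\le s^p$ together with $\mu<0$ gives $-\mu s^q\le -\mu s^p\le -\lambda s^p$ (the last since $\mu<0>\lambda$ would fail — careful: $\lambda<\mu<0$ so $-\lambda>-\mu>0$, hence $-\mu s^p\le -\lambda s^p$), so $e^{-\mu s^q}\le e^{-\lambda s^p}$; thus $\int_0^\infty e^{-\mu s^q}\|1_B T_s g\|_\infty\,\mathrm{d}s<\infty$, which would force $\lambda_c^{(q)}\le\mu$, contradicting $\mu<\lambda_c^{(q)}$. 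Hence no such $\lambda<\lambda_c^{(q)}$ works, i.e. $\lambda_c^{(p)}\ge\lambda_c^{(q)}$.

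Case {\sf (c)} then follows immediately: if $\lambda_c^{(q)}=0$, then {\sf (a)} gives $\lambda_c^{(p)}\le 0$ and {\sf (b)} gives $\lambda_c^{(p)}\ge 0$, so $\lambda_c^{(p)}=0$. For the final assertion, $\lambda_c=\lambda_c^{(1)}$ by the first proposition in the Appendix, so $\lambda_c=0$ means $\lambda_c^{(1)}=0$, and applying {\sf (c)} with $q=1$ yields $\lambda_c^{(p)}=0$ for every $p>1$ (and trivially for $p=1$), hence for all $p\ge 1$.

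The only genuine subtlety — the "hard part," though it is mild — is bookkeeping with the strict versus non-strict inequalities in the definition of $\lambda_c^{(p)}$ as an infimum, and making sure one always compares the two integrals using \emph{the same} witnessing function $g$ and \emph{the same} family of compact sets $B\Subset D$; once the comparison inequality $e^{-\lambda s^p}\le c\,e^{-\mu s^q}$ (with an explicit finite constant $c$ depending only on the signs of $\lambda,\mu$ and on $p,q$) is established on $[0,\infty)$, everything reduces to monotone passage to the infimum. One should also note the degenerate possibility $\lambda_c^{(q)}=\infty$ in case {\sf (a)} is excluded by hypothesis (the statement presupposes $\lambda_c^{(q)}\ge 0$ is finite, or one interprets the conclusion vacuously), and that in all cases the constant $C$ from \eqref{eq:bound with constant} plays no role here — it is only the pointwise weight comparison that matters.
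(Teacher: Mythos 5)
Your proof is correct and takes essentially the same route as the paper's: choose $\lambda$ (strictly) between the two quantities, keep the same witnessing $g$ and the same compact sets $B\Subset D$, and use the sign of $\lambda$ together with $s^p\ge s^q$ for $s\ge 1$ (and boundedness of the weights on $[0,1]$) to transfer finiteness of one integral to the other, with ({\sf c}) following from ({\sf a}) and ({\sf b}) and $\lambda_c^{(1)}=\lambda_c$. The only differences are cosmetic: the paper argues both parts by contradiction and leaves the pointwise comparison $e^{-\lambda s^p}\le c\,e^{-\lambda s^q}$ implicit, whereas you state it explicitly (and in ({\sf b}) insert an unnecessary but harmless intermediate $\mu$).
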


\begin{proof}   ({\sf a}): Suppose it is not true and take a $\lambda$ s.t.
$$\lambda_c^{(p)}>\lambda>\lambda_c^{(q)}\ge 0.$$
Then there is a  non-trivial non-negative $g\in C_{c}(D)$ so that for every  
 $B\Subset D,$ 
\[
 \int_{0}^{\infty}e^{-\lambda t^{q}}  \| 1_B T_{t}g \|_\infty \, \mathrm{d}t<\infty,
\]
but the same fails when $t^q$ in the integral is replaced by $t^p$; contradiction.

\bigskip

\noindent  ({\sf b}): Suppose it is not true and take a $\lambda$ s.t.
$$\lambda_c^{(p)}<\lambda<\lambda_c^{(q)}\le 0.$$
Then there is a  non-trivial non-negative $g\in C_{c}(D)$ so that for every  
 $B\Subset D,$   
\[
 \int_{0}^{\infty}e^{-\lambda t^{p}}  \| 1_B T_{t}g \|_\infty \, \mathrm{d}t<\infty,
\]
but the same fails when $t^p$ in the integral is replaced by $t^q$; contradiction.

\bigskip

\noindent   ({\sf c}): Clear from ({\sf a}) and ({\sf b}).
\end{proof}

An equivalent formulation of the definition of $\lambda_c^{(p)}$ is given in the following result.
\begin{thm} Assume that
$\beta$ is bounded from below, 
that is, $\inf_D \beta>-\infty$. Then
 \begin{align*}
\lambda_{c}^{(p)}
= \inf\left\{ \lambda\in\mathbb{R}:  
 \int_{0}^{\infty}e^{-\lambda s^{p}} \| 1_B T_{s}g \|_\infty \,\mathrm{d}s<\infty,
 \ \forall
 g\in C_{c}^{+}(D)\ \forall  B\Subset  D \right\} .
\end{align*}
\end{thm}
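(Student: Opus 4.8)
The plan is to prove the two inequalities separately. Write $\Lambda$ for the set of $\lambda\in\R$ appearing on the right-hand side (the ``$\forall g\in C_c^+(D)$'' condition), and recall that $\lambda_c^{(p)}=\inf\Lambda'$, where $\Lambda'$ is the analogous set with ``$\forall g$'' weakened to ``for \emph{some} non-trivial $g\in B^+_b(D)$''. If $\lambda\in\Lambda$, then the integrability condition holds in particular for a fixed non-trivial bump function $g\in C_c^+(D)\subset B^+_b(D)$, so $\lambda\in\Lambda'$; thus $\Lambda\subseteq\Lambda'$ and $\inf\Lambda\ge\lambda_c^{(p)}$, which is one of the two desired inequalities.

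For the reverse inequality it suffices to show that every $\lambda>\lambda_c^{(p)}$ belongs to $\Lambda$. Fix such a $\lambda$ and pick $\lambda'$ with $\lambda_c^{(p)}\le\lambda'<\lambda$ admitting a non-trivial witness $g_0\in B^+_b(D)$, i.e. $\int_0^\infty e^{-\lambda' s^p}\|1_B T_s g_0\|_\infty\,\mathrm{d}s<\infty$ for every $B\Subset D$. By the proposition $\lambda_c^{(p)}\in(-\infty,\infty]$ above, $g_0$ is not Lebesgue-a.e. zero (otherwise $T_sg_0\equiv0$ and $g_0$ would be a witness at every level), so $A_\delta:=\{g_0>\delta\}$ has positive Lebesgue measure for some $\delta>0$. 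The key step is the comparison: \emph{for every $g\in C_c^+(D)$ there are $s_0>0$ and $C<\infty$ with $g\le C\,T_{s_0}g_0$ pointwise on $D$}. This is where the hypothesis $M:=-\inf_D\beta<\infty$ is used: by Feynman-Kac, $(T_{s_0}g_0)(x)\ge e^{-Ms_0}\,\E_x[g_0(Y_{s_0});s_0<\tau_D]\ge\delta e^{-Ms_0}\,\P_x(Y_{s_0}\in A_\delta,\ s_0<\tau_D)$, and the last expression is, as a function of $x\in D$, continuous and strictly positive, by strict positivity and (local) joint continuity of the Dirichlet transition kernel of the non-degenerate diffusion $Y$ --- on an unbounded $D$ one first localizes to a large relatively compact smooth subdomain and quotes the standard parabolic estimates there. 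Hence $c_K:=\inf_{K}T_{s_0}g_0>0$ for $K:=\mathrm{supp}(g)$, and $C:=\|g\|_\infty/c_K$ works.

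Granting the comparison, the semigroup identity $T_s\circ T_{s_0}=T_{s+s_0}$ (Feynman-Kac together with the Markov property of $Y$) and positivity of $T_s$ give $T_sg\le C\,T_{s+s_0}g_0$, so $\|1_BT_sg\|_\infty\le C\,\|1_BT_{s+s_0}g_0\|_\infty$ for all $s\ge0$. Substituting $u=s+s_0$,
\[
\int_0^\infty e^{-\lambda s^p}\|1_B T_s g\|_\infty\,\mathrm{d}s\le C\int_{s_0}^\infty e^{-\lambda(u-s_0)^p}\|1_B T_u g_0\|_\infty\,\mathrm{d}u,
\]
and I would bound the right-hand side by splitting at a large $U$: on $[s_0,U]$ the factors $e^{-\lambda(u-s_0)^p}$ and $e^{\lambda'u^p}$ are bounded, so that part is a constant multiple of $\int_0^\infty e^{-\lambda' u^p}\|1_BT_ug_0\|_\infty\,\mathrm{d}u<\infty$; and for $u\ge U$ the expansion $(u-s_0)^p=u^p-ps_0u^{p-1}+o(u^{p-1})$ gives $\lambda'u^p-\lambda(u-s_0)^p\to-\infty$ (since $\lambda'<\lambda$), hence $e^{-\lambda(u-s_0)^p}\le e^{-\lambda'u^p}$ there, so the tail is again dominated by the finite integral. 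Therefore $\lambda\in\Lambda$, and letting $\lambda\downarrow\lambda_c^{(p)}$ gives $\inf\Lambda\le\lambda_c^{(p)}$.

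The step I expect to be the main obstacle is the comparison $g\le C\,T_{s_0}g_0$, specifically producing a \emph{uniform} strictly positive lower bound for $T_{s_0}g_0$ on each compact $K\Subset D$. This rests on strict positivity together with (local) continuity of the Feynman-Kac/Dirichlet heat kernel of $Y$; the global lower bound on $\beta$ is precisely what lets one factor out $e^{-Ms_0}$ and reduce to the potential-free Dirichlet kernel, and for unbounded $D$ one must additionally localize to relatively compact subdomains before invoking the standard parabolic facts. The remaining ingredients --- the trivial inclusion, the semigroup manipulation, and the elementary asymptotics of $e^{-\lambda(u-s_0)^p}$ versus $e^{-\lambda'u^p}$ --- are routine.
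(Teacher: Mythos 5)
Your proof is correct and follows essentially the same route as the paper: the trivial inclusion for one inequality, and for the other a domination $g\le C\,T_{s_0}g_0$ of an arbitrary $g\in C_c^+(D)$ by a time-shifted image of the witness $g_0$, followed by the semigroup identity and the elementary comparison of $e^{-\lambda(u-s_0)^p}$ with $e^{-\lambda' u^p}$. The only cosmetic difference is that you obtain the key lower bound on $T_{s_0}g_0$ directly from Feynman--Kac with the factor $e^{-Ms_0}$ and positivity/continuity of the killed transition kernel, whereas the paper phrases the same step via the auxiliary Schr\"odinger semigroup with bounded potential $-\beta^-$ and its strong Feller property and irreducibility; this is where both arguments use $\inf_D\beta>-\infty$.
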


\begin{proof}
If suffices to show that for every $g\in C_c^+(D)$,
\begin{equation}\label{e:3.8}
 \lambda_{c}^{(p)}\geq  
\inf\left\{ \lambda\in\mathbb{R}:  
 \int_{0}^{\infty}e^{-\lambda s^{p}} \| 1_B T_{s}g \|_\infty \,\mathrm{d}s<\infty 
 \hbox{ for every }   B\Subset  D \right\} .
\end{equation}
For every $\varepsilon >0$, by the definition of $\lambda_c^{(p)}$, there is some $g_0\in B^+_b(D)$ with $g_0\neq\mathbf{0}$ such that
$\int_0^\infty e^{-(\lambda_c^{(p)}+\varepsilon)t^p}  \| 1_B T_tg_0 \|_\infty \,\mathrm{d}t <\infty$ for every $B\Subset D$.
Let $g$ be an arbitrary function in $  C_c^+(D)$. 
We denote by $f^-$ the negative part of a function $f$; 
that is, $f^-(x):= \max\{0, -f(x)\}$.
Let $\{T^{(1)}_t; t\geq 0\}$ be the semigroup for the Schr\"odinger operator
$L-\beta^-$; that is,
$$ T^{(1)}_t f(x):=\E_x \left[ \exp \left( -\int_0^t \beta^-(X_s) \,\mathrm{d}s\right) f(X_t); t< \tau_D \right].
$$
Note that for any $f\geq 0$ and $t\ge 0$, one has $T_t  f\geq T_t^{(1)}f$.
Since by the strong Feller property and irreducibility of $\{T^{(1)}_t, t\geq 0\}$,
$T^{(1)}_1g_0\in C_b(D)$ and $T_1 g_0>0$, there is a constant $c>0$ such that
$0\leq g\leq cT^{(1)}_1 g_0\le cT_1 g_0$. 
Consequently, for any $B\Subset D$, 
\begin{align*}
\int_0^\infty e^{-(\lambda_c^{(p)}+2\varepsilon)t^p}  \| 1_B T_tg \|_\infty \,\mathrm{d}t
&\leq c \int_0^\infty e^{-(\lambda_c^{(p)}+2\varepsilon )t^p} \| 1_B T_{t+1} g_0 \|_\infty \,\mathrm{d}t \\
&= c \int_1^\infty e^{-(\lambda_c^{(p)}+2\varepsilon )(t-1)^p}  \| 1_B T_t g_0 \|_\infty \,\mathrm{d}t \\
&\leq  c_1 \int_0^\infty e^{-(\lambda_c^{(p)}+ \varepsilon )t^p} \| 1_B T_t g_0 \|_\infty \,\mathrm{d}t,
\end{align*}
 which is finite a.e. on $D$. This shows that
 $$ \inf\left\{ \lambda\in \mathbb{R}: \int_{0}^{\infty}e^{-\lambda s^{p}} \| 1_B  T_{s}g \|_\infty ) \,\text{d}s<\infty \hbox{ for every  } 
 B \Subset  D \right\} \leq \lambda_c^{(p)}+2 \varepsilon.
$$
Since this holds for every $\varepsilon>0$, we conclude that  \eqref{e:3.8}
and hence the theorem holds.
\end{proof}
\subsection{An estimate of $\lambda_c^{(p)}$ in a particular case}\label{estimate.particular}
Consider the case when $D=\mathbb R^d, L=\frac{1}{2}\Delta$, $\beta(x)=|x|^{\ell}, \ell>0$ and $\alpha>0$. It is natural to ask for what $p$ do we have $0<\lambda_c^{(p)}<\infty$? Can we estimate it?

Below we give a bound for $0<\ell<2$.
By \eqref{e:5.2} and the paragraph following it, if $(B,\P)$ is a Brownian motion, then
\begin{eqnarray}\label{e:c1}
&& e^{-|x|^\ell} \E_0 \left[ \exp \left(\int_0^t 2^{-\ell}
 |x+B_s|^\ell \mathrm{d}s\right) \right]  \\
&\leq&  \E_x \left[ \exp \left(\int_0^t    |B_s|^\ell \mathrm{d}s\right) \right]
\leq
e^{2^\ell |x|^\ell }\E_0 \left[ \exp \left(\int_0^t 2^\ell 
%a |B_s|^\ell \mathrm{d}s\right) \right], \nonumber
|B_s|^\ell \mathrm{d}s\right) \right], \nonumber
\end{eqnarray}
while
$$
\E_0 \left[ \exp \left( a \int_0^t |B_s|^\ell \mathrm{d}s\right) \right]
= \int_0^\infty  a  t^{1+\ell/2} e^{a u t^{1+\ell/2} } \left(
e^{-\frac12 c_\ell u^{2/\ell} (1+o(1))} \right) \mathrm{d}u. 
$$
Hence there is a constant $c>0$ so that 
$$
T_t 1 (x) \leq e^{2^\ell |x|^\ell } 2^\ell   t^{1+\ell/2}
\int_0^\infty   e^{2^\ell u t^{1+\ell/2} } 
e^{-c u^{2/\ell}  }  \mathrm{d}u. 
$$
% Changed to c/2 from 2c -- JE
Let $c_1$ be the solution of 
%ZC $-v=2^{\ell}v-cv^{2/\ell}$; that is, $c_1=(c/(2^{\ell}+1))^{\ell/(\ell-2)}$. 
$(2^{\ell} +1)v=cv^{2/\ell}$; that is, $c_1=((2^\ell +1 )/c)^{\ell/(2-\ell)}$. 
Note that for $v\geq c_1$, $cv^{2/\ell}\geq 2^{\ell}v + v $.
%ZC end of changes
Using the shorthands $$L:=(2+\ell)/(2-\ell)\in (1,\infty),\ \ k_{\ell}(x):=e^{2^\ell |x|^\ell } 2^\ell,$$ a change of variable $u=t^\eta v$ with $\eta =L \ell/2$ yields that
\begin{eqnarray*}
T_t 1 (x) &\leq& k_{\ell}(x)\,   t^{L}
\int_0^\infty   \exp\left( t^{L} (2^\ell v-cv^{2/\ell} )\right)   \mathrm{d} v \\
%&\leq&  k_{\ell}(x)\,   t^{L} \left( \int_0^{c_1\vee 1}
 %e^{2^\ell t^{L} v }   \mathrm{d} v +  \int_{c_1\vee 1}^\infty
 %e^{- t^{L} v }   \mathrm{d} v \right) \\
%&\leq & k_{\ell}(x)   \left(\exp\left\{2^\ell (c_1\vee 1) 
%t^{L}  \right\} +1\right). 
&\leq&  k_{\ell}(x)\,   t^{L} \left( \int_0^{c_1}
 e^{v 2^\ell t^{L} }   \mathrm{d} v +  \int_{c_1}^\infty
 e^{- v t^{L}  }   \mathrm{d} v \right) \\
&\leq & k_{\ell}(x)  \left( \exp\left(c_1 2^\ell   
t^{L}  \right) +1\right). 
\end{eqnarray*}
%Thus, for every $\gamma > e^{2^\ell (c_1\vee 1)}$,
Thus, for every $\gamma > e^{c_1 2^\ell }$,
$$
\int_0^\infty e^{-\gamma t^{L}  } T_t 1(x) \mathrm{d}t <\infty.
$$
It follows that, with $p=L$, $\lambda^{(p)}_c\leq \gamma$ and so 
 \begin{equation}\label{upper.est.p-GPE}
 %\lambda^{(p)}_c  \leq e^{2^\ell (c_1\vee 1)}.
\lambda^{(p)}_c  \leq e^{c_1 2^\ell }.
 \end{equation}
The exponent $L=(2+\ell)/(2-\ell)$ is sharp when $\ell=1$, as can be seen from \eqref{BM.with.p-potential}. 
 
\medskip
From \eqref{upper.est.p-GPE} along with Theorem \ref{growth.rate.with.pgpe}, we obtain the following upper estimate. (Cf. Example \ref{revisit}.)
\begin{cor}[Upper estimate for SBM with $|x|^{\ell}$-potential when $0\le \ell<2$]
\label{nice.upper.est}
For the $\left((1/2)\Delta,|x|^{\ell},\alpha;\R^d\right)$-superdiffusion, with $0\le \ell<2$, one has that almost surely, as $t\to\infty$,
$$X_t(B)=\mathcal{O}\left(\exp\left\{\mathrm{const}\cdot t^{\frac{2+\ell}{2-\ell}}\right\}\right),\ B\Subset \mathbb R^d,$$ provided that $\alpha$ is such that the compact support property holds. 
\end{cor}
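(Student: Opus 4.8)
The plan is to obtain the corollary by simply combining the eigenvalue bound \eqref{upper.est.p-GPE} with Theorem \ref{growth.rate.with.pgpe}; essentially all of the analytic work has already been carried out, so what remains is bookkeeping. First I would treat the endpoint $\ell=0$ separately, since the Feynman--Kac/Schilder computation preceding \eqref{upper.est.p-GPE} was performed for $0<\ell<2$. When $\ell=0$ we have $\beta\equiv 1$, which is bounded, so $\lambda_c=\lambda_c^{(1)}<\infty$ and Theorem \ref{growth.rate.with.pgpe} applies with $p=1=\tfrac{2+0}{2-0}$, giving $X_t(B)=\mathcal{O}(\exp((\lambda_c+\varepsilon)t))$, i.e. the asserted bound with exponent $t^{(2+\ell)/(2-\ell)}=t$. (This still uses the compact support property, which is exactly the standing proviso; it holds e.g. when $\alpha\ge\beta$ by Claim \ref{csp.holds}.)

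For $0<\ell<2$ I would first note that the superdiffusion is well defined: by part (i) of Claim \ref{finite.for.Schrodinger}, $T^{\frac12\Delta+\beta}_t 1<\infty$ for all $t>0$, so Assumption \ref{main.assumption} holds with $h\equiv 1$ and Theorem \ref{main.thm} constructs $X$. Set $p:=L=\tfrac{2+\ell}{2-\ell}\in(1,\infty)$. The chain of inequalities established just above \eqref{upper.est.p-GPE} gives $\lambda_c^{(p)}\le e^{c_1 2^\ell}<\infty$. Under the stated assumption that $\alpha$ is such that the compact support property holds (for instance $\alpha\ge\beta=|x|^\ell$, in which case Claim \ref{csp.holds} applies because $\tfrac1\alpha L=\tfrac1{2\alpha}\Delta$ has coefficients bounded from above whenever $\alpha$ is bounded away from zero), the hypotheses of Theorem \ref{growth.rate.with.pgpe} are met.

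Applying Theorem \ref{growth.rate.with.pgpe}: fix $B\Subset\mathbb{R}^d$, $\mu\in\mathcal{M}_c(D)$ and $\varepsilon>0$. Then $P_\mu$-almost surely, as $t\to\infty$,
$$
X_t(B)=\mathcal{O}\big(\exp((2^{p-1}\lambda_c^{(p)}+\varepsilon)\,t^{p})\big)=\mathcal{O}\big(\exp(\mathrm{const}\cdot t^{\frac{2+\ell}{2-\ell}})\big),
$$
with $\mathrm{const}=2^{p-1}\lambda_c^{(p)}+\varepsilon\le 2^{p-1}e^{c_1 2^\ell}+\varepsilon$, which is precisely the claimed estimate. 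Combining with the $\ell=0$ case handled above covers the whole range $0\le\ell<2$.

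I do not expect a substantive obstacle here, since the hard inputs — the Feynman--Kac bound behind \eqref{upper.est.p-GPE} and the continuous-supermartingale argument behind Theorem \ref{growth.rate.with.pgpe} — are already in place. The only points that need a word of care are (a) verifying that the compact support property is genuinely in force under the proviso on $\alpha$, since Theorem \ref{growth.rate.with.pgpe} uses it to guarantee continuity of the supermartingale $N_t$ (here Claims \ref{csp.holds} and \ref{csp.still.holds} and the remark on the coefficients of $\tfrac1\alpha L$ are the relevant sufficient conditions), and (b) handling the degenerate exponent at $\ell=0$ separately so that the single formula $t^{(2+\ell)/(2-\ell)}$ is valid across the stated range.
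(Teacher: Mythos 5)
Your proposal is correct and follows essentially the same route as the paper: the corollary is obtained by feeding the bound \eqref{upper.est.p-GPE} on $\lambda_c^{(p)}$ with $p=\frac{2+\ell}{2-\ell}$ into Theorem \ref{growth.rate.with.pgpe}, under the standing compact support proviso. Your separate treatment of the endpoint $\ell=0$ (where $\beta$ is bounded, $\lambda_c^{(1)}=\lambda_c<\infty$, and the theorem applies with $p=1$) is a sensible extra precaution that the paper leaves implicit.
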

Regarding the assumption on $\alpha$, recall Remark \ref{aboutCSP}.

\bigskip
{\bf Acknowledgement}: The second author  thanks   J. Feng for bringing \cite{SharpPoincare1,SharpPoincare2} to his attention, and to J. Berestycki, S. Kuznetsov, D. Stroock and J. Swart for valuable discussions. The hospitality of Microsoft Research, Redmond and of the University of Washington, Seattle are also gratefully acknowledged by the second author.

\selectlanguage{american} 
\bibliographystyle{amsalpha}
\bibliography{Jancsi,superprocess}
\selectlanguage{english}

\end{document}